\providecommand{\tabularnewline}{\\}
\numberwithin{equation}{section}
\numberwithin{figure}{section}
\theoremstyle{plain}
\newtheorem{thm}{\protect\theoremname}
\newtheorem{thm}{\protect\theoremname}[chapter]
  \theoremstyle{remark}
  \newtheorem{rem}[thm]{\protect\remarkname}
  \theoremstyle{definition}
  \newtheorem{defn}[thm]{\protect\definitionname}
  \theoremstyle{plain}
  \newtheorem{prop}[thm]{\protect\propositionname}
  \theoremstyle{remark}
  \newtheorem{notation}[thm]{\protect\notationname}
  \theoremstyle{plain}
  \newtheorem{fact}[thm]{\protect\factname}
  \theoremstyle{plain}
  \newtheorem{lem}[thm]{\protect\lemmaname}
  \theoremstyle{plain}
  \newtheorem{cor}[thm]{\protect\corollaryname}
  \theoremstyle{plain}
  \newtheorem{conjecture}[thm]{\protect\conjecturename}
  \theoremstyle{definition}
  \newtheorem{example}[thm]{\protect\examplename}
\newcommand\appendix@section[1]{%
  \refstepcounter{section}%
  \orig@section*{Appendix \@Alph\c@section #1}%
  \addcontentsline{toc}{section}{Appendix \@Alph\c@section #1}%
}
\let\orig@section\section
\g@addto@macro\appendix{\let\section\appendix@section}
\def\th@plain{%
  \thm@notefont{}
  \thm@headfont{\normalfont\bfseries}
  \normalfont 
}
\def\th@definition{%
  \thm@notefont{}
  \thm@headfont{\normalfont\bfseries}
  \normalfont 
}
\renewenvironment{proof}[1][\textit{\proofname}]
{\par
  \pushQED{\qed}%
  \normalfont \topsep6\p@\@plus6\p@\relax
  \trivlist
  \item[\hskip\labelsep
        \normalfont
    #1\@addpunct{.}]\ignorespaces
}{%
  \popQED\endtrivlist\@endpefalse
}
\def\gobble#1{}
\def\cs#1{\expandafter\gobble\string\\#1}
  \providecommand{\conjecturename}{Conjecture}
  \providecommand{\corollaryname}{Corollary}
  \providecommand{\definitionname}{Definition}
  \providecommand{\examplename}{Example}
  \providecommand{\factname}{Fact}
  \providecommand{\lemmaname}{Lemma}
  \providecommand{\notationname}{Notation}
  \providecommand{\propositionname}{Proposition}
  \providecommand{\remarkname}{Remark}
\providecommand{\theoremname}{Theorem}
\begin{document}

\title{On the Support for Weight Modules of Affine Kac-Moody-Algebras}

\author{Thomas Bunke}

\address{Department Mathematik, FAU Erlangen-Nürnberg, Cauerstraße 11, 91058
Erlangen}

\email{thomas.bunke@gmail.com}
\begin{abstract}
\thanks{2010 Mathematics Subject Classification. Primary \textendash{} 17B67
Kac-Moody algebras; extended affine Lie algebras, 06B15 Representation
theory, Secondary \textendash{} 17B22 Root systems, 14T05 Tropical
geometry, 90C90 Applications of mathematical programming (constraint
programming)} An irreducible weight module of an affine Kac-Moody algebra $\mathfrak{g}$
is called dense if its support is equal to a coset in $\mathfrak{h}^{*}/Q$.
Following a conjecture of V. Futorny about affine Kac-Moody algebras
$\mathfrak{g}$, an irreducible weight $\mathfrak{g}$-module is dense
if and only if it is cuspidal (i.e. not a quotient of an induced module).
The conjecture is confirmed for $\mathfrak{g}=A_{2}^{\left(1\right)}$,
$A_{3}^{\left(1\right)}$ and$A_{4}^{\left(1\right)}$ and a classification
of the supports of the irreducible weight $\mathfrak{g}$-modules
obtained. For all $A_{n}^{\left(1\right)}$ the problem is reduced
to finding primitive elements for only finitely many cases, all lying
below a certain bound. For the left-over finitely many cases an algorithm
is proposed, which leads to the solution of Futorny's conjecture for
the cases $A_{2}^{\left(1\right)}$ and $A_{3}^{\left(1\right)}$.
Yet, the solution of the case $A_{4}^{\left(1\right)}$ required additional
combinatorics. 

A new category of hypoabelian subalgebras, pre-prosolvable subalgebras,
and a subclass thereof, quasicone subalgebras, is introduced and its
objects classified.
\end{abstract}

\maketitle

\section{Introduction}

\textbf{Survey.} Let $\mathfrak{g}$ be an affine Kac-Moody algebra
with Cartan subalgebra $\mathfrak{h}$, root system $\Delta$ and
center $\mathbb{C}c$. A $\mathfrak{g}$-module $V$ is called a \emph{weight
module }if $V=\bigoplus_{\lambda\in\mathfrak{h}^{*}}V_{\lambda}$,
where $V_{\lambda}$ denote the weight subspaces (also called weight
components) $V_{\lambda}=\left\{ v\in V\mid hv=\lambda\left(h\right)v\mbox{ for all }h\in\mathfrak{h}\right\} $.
We will also assume $V$ to have countable dimension. If $V$ is an
irreducible weight $\mathfrak{g}$-module then $c$ acts on $V$ as
a scalar, called\emph{ central charge} of $V$. For a weight $\mathfrak{g}$-module
$V$, the support is the set $\mathrm{supp}\left(V\right)=\left\{ \lambda\in\mathfrak{h}^{*}\mid V_{\lambda}\ne0\right\} $.
The root lattice $Q$ is the subgroup of $\mathfrak{h}^{*}$ generated
by $\Delta$. If $V$ is irreducible then $\mathrm{supp}\left(V\right)\subset\lambda+Q$
for some $\lambda\in\mathfrak{h}^{*}$. An irreducible weight $\mathfrak{g}$-module
$V$ is called \emph{dense }if its support is equal to a coset in
$\mathfrak{h}^{*}/Q$ and \emph{non-dense} if $\mathrm{supp}\left(V\right)\subsetneq\lambda+Q$;
a point of the set $\lambda+Q\smallsetminus\mathrm{supp}\left(V\right)$
will also be called a \emph{hole}.

Another criterion to classify modules is according to the way they
are constructed. There are two classes of irreducible weight $\mathfrak{g}$-modules,
those parabolically induced from other modules and those which are
not; we call the latter \emph{cuspidal} modules\index{cuspidal module}.
A result of V. Futorny and A. Tsylke \cite{FutornyTsylke} reduces
the classification of irreducible weight $\mathfrak{g}$-modules with
finite-dimensional weight spaces to the classification of irreducible
cuspidal modules over Levi subalgebras. Any such module is a quotient
of a module induced from an irreducible cuspidal module over a finite-dimensional
reductive Lie subalgebra. The pending conjecture that connects the
two approaches is as follows.

\textbf{Conjecture 1.} An irreducible weight $\mathfrak{g}$-module
is dense if and only if it is cuspidal \cite{Futorny1997}.

The property of a weight module $V$ with finite-dimensional weight
spaces being cuspidal is, for a reductive Lie algebra $\mathfrak{g}$,
equivalent to the statement that all root operators act injectively
on $V$ (cf. \cite[Corollary 3.7]{DimitrovMathieuPenkov}). If $\mathfrak{g}$
admits only cuspidal modules with finite-dimensional weight spaces
then all simple components of $\mathfrak{g}$ are of type $A$ and
$C$ \cite{Fernando1990}.

The classification of all simple weight modules with finite-dimensional
weight spaces over affine Lie algebras is work-in-progress \cite{DimitrovGrantscharov}.
If we omit the requirement of finite-dimensional weight subspaces,
the achievement of a complete classification is much more difficult,
because the method of (twisted) localization, developed by V. V. Deodhar
and T. Enright \cite[and references therein]{Deodhar1980} and successfully
applied by O. Mathieu for the simple Lie algebra case \cite{Mathieu2000}
and by I. Dimitrov and D. Grantcharov in the affine case \cite{DimitrovGrantscharov},
is not applicable. 

The classification for non-dense irreducible $A_{1}^{\left(1\right)}$-modules
with a finite-dimensional weight subspace has been completed by V.
Futorny \cite{Futorny1996}. The classification problem of non-zero
central charge modules with all finite-dimensional weight subspaces
is solved for all affine Kac-Moody algebras \cite{FutornyTsylke}.
In these cases, an irreducible module is either a quotient of a classical
Verma module, or of a generalized Verma module, or of a loop module
(induced from a Heisenberg subalgebra). An important tool is the concept
of primitive vectors. 

A \emph{primitive} \emph{vector} is a vector $v$ of a weight $\mathfrak{g}$-module
with the following property: there exists a parabolic subalgebra $\mathfrak{p}$
with Levi decomposition $\mathfrak{p}=\mathcal{L}\oplus\mathcal{N}$
such that $\mathcal{N}$ acts trivially on $v$. This primitive vector
generates an irreducible quotient of a classical Verma module, a generalized
Verma type module or a generalized loop module, depending on the the
type of $\mathfrak{p}$ \cite{Futorny1994}. In the well-studied case
of a classical Verma module, $\mathfrak{p}$ is just a Borel subalgebra
\cite{Kac1983}. Equivalent to Conjecture 1, there is

\textbf{Conjecture 1'.} Every non-dense weight $\mathfrak{g}$-module
$V$ contains a primitive vector. 

A proof of the conjectures is an important step towards the classification
of irreducible weight $\mathfrak{g}$-modules.

\textbf{Conventions.} Denote by $\mathbb{C}$ the complex numbers
and by $\mathbb{Z}_{\ge k}$ the set $\left\{ k,k+1,\dots\right\} $,
by $\mathbb{Z}_{+}=\mathbb{Z}_{\ge1}$ and $\mathbb{N}_{0}$ for $\mathbb{Z}_{\ge0}$.
The difference of sets is written $A\smallsetminus B=\left\{ x\in A\mid x\notin B\right\} $.

\section*{\textbf{Acknowledgments}}

I am very grateful to Slava Futorny for his encouragement and discussions.

\section{Affine Lie Algebras}

\subsection{Prerequisites}

Let $\mathfrak{g}^{\circ}$ be a simple finite dimensional complex
Lie algebra over $\mathbb{C}$ with a non-degenerate invariant symmetric
bilinear form $\kappa:\mathfrak{g}^{\circ}\times\mathfrak{g}^{\circ}\rightarrow\mathbb{C}$.
Let $\mathfrak{h}^{\circ}$ be Cartan subalgebra and $\Delta^{\circ}$
the root system with respect to $\mathfrak{h}^{\circ}$. The loop
algebra $\mathcal{L}\left(\mathfrak{g}^{\circ}\right)=\mathfrak{g}^{\circ}\otimes\Bbbk\left[t^{-1},t\right]$
has a $\mathrm{span}_{\mathbb{Z}}\Delta^{\circ}\times\mathbb{Z}$-grading
and a double extension 
\[
\mathfrak{g}=\widehat{\mathcal{L}}\left(\mathfrak{g}^{\circ}\right)=\left(\mathcal{L}\left(\mathfrak{g}^{\circ}\right)\oplus_{\omega_{D}}\mathbb{C}\right)\rtimes_{\widetilde{D}}\mathbb{C}=\mathcal{L}\left(\mathfrak{g}^{\circ}\right)\oplus\mathbb{C}c\oplus\mathbb{C}d,
\]
 for $\omega_{D}\left(x,y\right)=\tilde{\kappa}\left(Dx,y\right)$,
$\tilde{\kappa}\left(Dx\otimes t^{n},y\otimes t^{m}\right)=\delta_{n+m,0}\kappa^{\circ}\left(x,y\right)$
and $D\left(x\otimes t^{n}\right):=nx\otimes t^{n}$. The Lie bracket
in $\mathfrak{g}$ is given by
\begin{eqnarray*}
 &  & \left[x_{1}\otimes t^{n_{1}}+a_{1}c+\xi_{1}d,x_{2}\otimes t^{n_{2}}+a_{2}c+\xi_{2}d\right]=\\
 &  & =\left[x_{1},x_{2}\right]\otimes t^{n_{1}+n_{2}}+\xi_{1}n_{2}\left(x_{2}\otimes t^{n_{2}}\right)-\xi_{2}n_{1}\left(x_{1}\otimes t^{n_{1}}\right)+n_{1}\kappa^{\circ}\left(x_{1},x_{2}\right)\delta_{n_{1}+n_{2},0}c.
\end{eqnarray*}
 and the linear functionals $n\delta\:\left(n\in\mathbb{Z}\right)$,
with $\delta\left(d\right)=1$ and $\delta\mid_{\mathfrak{h}^{\circ}\oplus\mathbb{C}c}=\left\{ 0\right\} $
are roots \cite{Kac1983,MoodyPianzola}. The Cartan algebra of $\mathfrak{g}$
is $\mathfrak{h}=\mathfrak{h}^{\circ}\oplus\mathbb{C}d\oplus\mathbb{C}d$.

The invariant form restricted to $\mathfrak{h}$ is non-degenerate.
There is thus an injective map $\flat:\mathfrak{h}\to\mathfrak{h}^{*},h\mapsto h^{\flat},h^{\flat}\left(x\right)=\kappa\left(x,h\right)$.
For $\alpha\in\mathfrak{h}^{\flat}=\flat\left(\mathfrak{h}\right)$
we put $\alpha^{\sharp}=\flat^{-1}\left(\alpha\right)$ and define
a symmetric bilinear form on $\mathfrak{h}^{\flat}$ by $\left(\alpha,\beta\right)=\kappa\left(\alpha^{\sharp},\beta^{\sharp}\right)$.

The affine Weyl group $\mathcal{W}$ is generated by the set of reflections
\[
r_{\alpha}\left(\lambda\right)=\lambda-\cfrac{2\left(\lambda^{\circ},\alpha\right)}{\left(\alpha,\alpha\right)}\alpha,\:\alpha\in\Delta,\:\lambda=\lambda^{\circ}+zc+td\in\mathfrak{h}^{*},\:z,t\in\mathbb{C}.
\]
If $r_{\alpha}$ is the identity on $\left(\mathfrak{h}^{\circ}\right)^{*}$,
then $\alpha$ is an imaginary root, otherwise it is a real root.
The set of roots is the disjoint unit of imaginary and real roots,
i.e. $\Delta=\Delta_{\mathrm{im}}\dot{\cup}\Delta_{\mathrm{re}}$.
For $\alpha\in\left(\mathfrak{h}^{\circ}\right)^{*}$ the translation
with respect to $\alpha$ is the operator $t_{\alpha}$ acting on
$\mathfrak{h}^{*}$ by 
\begin{equation}
t_{\alpha}\left(\lambda\right)=\lambda+\lambda\left(c\right)\alpha-\left(\left(\lambda,\alpha\right)+\frac{1}{2}\left(\alpha,\alpha\right)\lambda\left(c\right)\right)\delta\quad\left(\lambda\in\mathfrak{h}^{*}\right).\label{eq:translation}
\end{equation}

 A subset $P\subset\Delta$ is called\label{def:partition}\emph{
additively closed,} if $\left(P+P\right)\cap\Delta\subset P$. An
additively closed subset is a \emph{parabolic system,} if $P\ne\Delta$
and $P\cup-P=\Delta$. A subset $P\subset\Delta$ is a\emph{ positive
system,} if $\mathrm{span}_{\mathbb{N}_{0}}P\cap-\mathrm{span}_{\mathbb{N}_{0}}P=\left\{ 0\right\} $
and $P\cup-P=\Delta$. Two subsets are called \emph{equivalent }if
they lie in the same $\mathcal{W}\times\left\{ \pm1\right\} $-orbit. 

From \cite[Ch. 2]{Futorny1997} we know that there exists a finite
number of pairwise non-equivalent positive systems of the root system
of $\mathfrak{g}$. The positive systems $P=\left(\left\{ \alpha+\mathbb{Z}\delta\mid\alpha\in\Delta_{+}^{\circ}\right\} \cap\Delta\right)\cup\mathbb{Z}_{+}\delta$
and $\Delta^{+}=\left(\left\{ \alpha+\mathbb{Z}_{\ge0}\delta\mid\alpha\in\Delta_{+}^{\circ}\right\} \cup\left\{ -\alpha+\mathbb{Z}_{+}\delta\mid\alpha\in\Delta_{+}^{\circ}\right\} \cup\mathbb{Z}_{+}\delta\right)\cap\Delta$
are non-equivalent. 
\begin{rem}
In the literature $P$ is also called \emph{imaginary parabolic} \emph{partition}
of $\Delta$, as related to the natural Borel subalgebra and imaginary
Verma modules both introduced later. The set $\Delta^{+}$ is called
\emph{standard} (or \emph{classical}) \emph{parabolic} \emph{partition}.
Any other positive system that is not equivalent to $P$ or $\Delta^{+}$
will be labeled \emph{mixed type}. 
\end{rem}

Kac and Jacobson \cite{JacobsonKac1985}, and independently the exposition
of V. Futorny \cite{Futorny1992} have determined a positive system
of a finite-rank root system $\Delta$ uniquely by means of characteristic
functionals. The latter exposition calls a positive system a parabolic
partition.

Let $\Pi$ be a basis for the root system and $\Pi^{*}$ be the dual
basis, defined by $\alpha^{*}\left(\beta\right)=\delta_{\alpha,\beta}$
for all $\alpha^{*}\in\Pi^{*}$ and all $\beta\in\Pi$. There exists
a pair of basis $\left(\Pi_{\delta},\Pi_{\delta}^{*}\right)$, where
$\Pi_{\delta}^{*}$ contains $\delta^{*}$ and $\Pi_{\delta}$ contains
$\delta$. We denote the coefficients of $\delta=\sum_{\alpha\in\Pi}k_{\alpha}\alpha$
with respect to such a base change by $k_{\alpha}\in\mathbb{N}_{0}$,
$\alpha\in\Pi$. Define furthermore weights $\omega_{\alpha}:\mathfrak{h}\rightarrow\Bbbk$
by $\omega_{\alpha}\left(\check{\beta}\right)=\delta_{\alpha,\beta}$
for $\alpha,\beta\in\Pi$. Then the set $\left(\omega_{\alpha}\right)_{\alpha\in\Pi}$
is the set of \emph{fundamental weight}s. 

From the above it follows, that $2\alpha^{\sharp}=\left(\alpha,\alpha\right)\check{\alpha}$.
A weight $\lambda\in\mathfrak{h}^{*}$ is positive, if it is a positive
linear combination of fundamental weights. Thus if $\lambda$ is positive
with respect to $\Pi$, then $\lambda\left(\alpha^{\sharp}\right)$
is also positive for all $\alpha\in\Pi$, unless $\alpha$ is an isotropic
root. For any weight, $\ker\left(\lambda\circ\sharp\right)=\left(\ker\lambda\right)^{\flat}$. 

Then we may define the weights $\lambda_{\pm X}=\pm\sum_{\alpha\in X}\omega_{\alpha}$
for all $X\subset\Pi\smallsetminus\left\{ \alpha_{0}\right\} =:\Pi^{\circ}$
and 
\begin{equation}
\phi_{X}=\begin{cases}
\sum_{\alpha\in\Pi^{\circ}\smallsetminus X}\omega_{\alpha}-\left(\sum_{\alpha\in\Pi^{\circ}\smallsetminus X}k_{\alpha}\right)\omega_{\alpha_{0}} & \mbox{ if }X\ne\Pi^{\circ}\\
\lambda_{\Pi^{\circ}} & \mbox{ if }X=\Pi^{\circ}.
\end{cases}\label{eq:hyperplane defining weights}
\end{equation}

For a pair of weights $\left(\lambda_{1},\lambda_{2}\right)$, define
\begin{equation}
\Delta_{+}\left(\lambda_{1},\lambda_{2}\right)=\Delta_{+}^{\flat}\left(\lambda_{1}\right)\cup\left(\Delta_{0}^{\flat}\left(\lambda_{1}\right)\cap\Delta_{+}^{\flat}\left(\lambda_{2}\right)\right)\label{eq:positive system by tuple}
\end{equation}
 and for $X\subset\Pi^{\circ}$, set $\Delta_{+}\left(X\right)=\Delta_{+}\left(\phi_{X},\phi_{\Pi}\right).$

Note that this is consistent with the definition of $\Delta_{+}\left(\Pi^{\circ}\right)$
as $\Delta_{+}\left(\Pi^{\circ}\right)=\mathrm{span}_{\mathbb{N}}\Pi^{\circ}\cap\Delta$.
The following theorem tells us that the equivalence classes of positive
systems are parametrized by the sets $X\subset\Pi^{\circ}$.
\begin{thm}
\label{thm:positive system by tuple}\cite{Futorny1997} If $P$ is
a positive system of an affine root system $\Delta$, then there is
a set $X\subset\Pi^{\circ}$ such that $P$ is $\mathcal{W}\times\left\{ \pm1\right\} $-equivalent
to \textup{$\Delta_{+}\left(X\right)$}.
\end{thm}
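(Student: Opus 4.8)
The plan is to combine the Jacobson--Kac--Futorny description of positive systems of a finite-rank root system by characteristic functionals with a normalization of the leading functional under $\mathcal{W}$. Since $\Delta_{\mathrm{im}}=\mathbb{Z}_{\neq0}\delta$ and $P$ is a positive system, exactly one of $\mathbb{Z}_{+}\delta$ and $-\mathbb{Z}_{+}\delta$ lies in $P$, so after replacing $P$ by $-P$ if necessary (the $\{\pm1\}$ freedom) I may assume $\mathbb{Z}_{+}\delta\subset P$. By the cited parametrization there is a flag of functionals $\lambda_{1},\dots,\lambda_{m}\in\mathfrak{h}^{*}$, taken with $m$ minimal, such that $\alpha\in P$ exactly when $(\lambda_{i},\alpha)>0$ for the least index $i$ with $(\lambda_{i},\alpha)\neq0$; writing $\Delta_{+}(\lambda_{1},\dots,\lambda_{m})$ for this set in the evident extension of \eqref{eq:positive system by tuple}, one has $P=\Delta_{+}(\lambda_{1},\dots,\lambda_{m})$. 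Everything in $P$ lying outside $\Delta_{0}^{\flat}(\lambda_{1})$ is already determined by $\lambda_{1}$ alone, so the argument splits into (a) bringing $\lambda_{1}$ into a standard position, and (b) showing that the positive system $P$ induces on $\Delta_{0}^{\flat}(\lambda_{1})$ may then also be taken standard, so that in fact $m=2$ suffices.

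For step (a), note that $\Delta_{0}^{\flat}(\lambda_{1})=\Delta\cap\ker(\lambda_{1}\circ\sharp)$ is the closed subsystem of $\Delta$ cut out by the hyperplane $\ker(\lambda_{1}\circ\sharp)$, and normalize $\lambda_{1}$ by distinguishing the cases $\lambda_{1}(c)\neq0$ and $\lambda_{1}(c)=0$; since $(\lambda_{1},\delta)=\lambda_{1}(c)$ these are exactly the cases $\delta\notin\Delta_{0}^{\flat}(\lambda_{1})$ and $\delta\in\Delta_{0}^{\flat}(\lambda_{1})$. Using the finite Weyl subgroup of $\mathcal{W}$ one first makes the $\mathfrak{h}^{\circ}$-component $\lambda_{1}^{\circ}$ dominant; when $\lambda_{1}(c)\neq0$ one applies in addition the translations $t_{\alpha}$ of \eqref{eq:translation}, which shift $\lambda_{1}^{\circ}$ by $\lambda_{1}(c)\alpha$, to move $\lambda_{1}^{\circ}$ into the closure of the fundamental alcove, while when $\lambda_{1}(c)=0$ the value $(\lambda_{1},\alpha+k\delta)=(\lambda_{1},\alpha)$ is independent of $k$, so $\Delta_{0}^{\flat}(\lambda_{1})$ is the affinization of a parabolic subsystem of $\Delta^{\circ}$ together with $\mathbb{Z}\delta$. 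The goal is then to verify that $\lambda_{1}$ becomes proportional to one of the weights $\phi_{X}$, $X\subset\Pi^{\circ}$, of \eqref{eq:hyperplane defining weights} --- which serve as the normal forms for leading functionals compatible with $\mathbb{Z}_{+}\delta\subset P$ --- and that the conjugating element of $\mathcal{W}$ may be chosen so as also to carry $\Delta_{+}^{\flat}(\lambda_{1})$ onto $\Delta_{+}^{\flat}(\phi_{X})$. Then $P$ is $\mathcal{W}\times\{\pm1\}$-equivalent to $\Delta_{+}(\phi_{X},\lambda_{2}',\dots,\lambda_{m}')$ with the transported tail.

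For step (b), the functionals $\lambda_{2}',\dots,\lambda_{m}'$ now matter only on $\Delta_{X}:=\Delta_{0}^{\flat}(\phi_{X})$, which is finite when $X=\Pi^{\circ}$ --- and then $\Delta_{+}(\Pi^{\circ})$ is the standard partition $\Delta^{+}$ --- and otherwise is again of affine type but of strictly smaller rank. Since $\mathbb{Z}_{+}\delta\subset P$ has already been fixed, rerunning the normalization of (a) inside $\Delta_{X}$, whose own parameter is then a subset of $X$ that would have been absorbed into a different choice of $X$ at the previous step, shows by induction on the rank --- the base case $\Delta_{X}$ finite being the classical fact that parabolic subsystems of a finite root system are $\mathcal{W}$-conjugate to standard ones --- that the positive system $P$ induces on $\Delta_{X}$ is equivalent, under the stabilizer of $\phi_{X}$ in $\mathcal{W}$, to $\Delta_{0}^{\flat}(\phi_{X})\cap\Delta_{+}^{\flat}(\phi_{\Pi})$. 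Hence $m=2$, one may take $\lambda_{2}'=\phi_{\Pi}$, and $P$ is $\mathcal{W}\times\{\pm1\}$-equivalent to $\Delta_{+}(\phi_{X},\phi_{\Pi})=\Delta_{+}(X)$. (The extreme cases $X=\emptyset$ and $X=\Pi^{\circ}$ recover the natural/imaginary and the standard partitions.)

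The step I expect to be the main obstacle is (a): showing that every wall of the combined arrangement of the real-root hyperplanes and the level hyperplane $\{\lambda(c)=0\}$ is $\mathcal{W}$-conjugate to a standard wall $\ker(\phi_{X}\circ\sharp)$, and in particular that a wall whose vanishing set involves $\alpha_{0}$ can always be moved off $\alpha_{0}$ so that the index set $X$ lands inside $\Pi^{\circ}$ rather than in all of $\Pi$. This forces one to control the arithmetic of the translation lattice sitting inside $\mathcal{W}$ carefully --- the extended affine Weyl group would permute the nodes of the affine Dynkin diagram, but only $\mathcal{W}$ itself is available --- and it is here that the explicit form of the $\phi_{X}$ and the normalization $\mathbb{Z}_{+}\delta\subset P$ do the real work; once (a) is settled, the descent in (b) is routine given the classification of parabolic subsystems of a root system.
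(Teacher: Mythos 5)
The paper does not actually prove this theorem: it is imported verbatim from \cite{Futorny1997} (the surrounding text points to Ch.~2 of those notes and to Jacobson--Kac for the characteristic-functional description), so there is no internal proof to measure your argument against. Judged on its own terms, your proposal is a correct high-level outline of how the proof goes in the literature, but it is not a proof, because the entire content of the theorem is concentrated in your step (a), which you state as a ``goal'' and explicitly flag as the expected obstacle rather than establish. Concretely, after the reduction $\mathbb{Z}_{+}\delta\subset P$ you must show that the leading functional $\lambda_{1}$ can be moved by $\mathcal{W}$ alone (not by the extended affine Weyl group) so that $\Delta_{0}^{\flat}\left(\lambda_{1}\right)=\Delta_{0}^{\flat}\left(\phi_{X}\right)$ and $\Delta_{+}^{\flat}\left(\lambda_{1}\right)=\Delta_{+}^{\flat}\left(\phi_{X}\right)$ for some $X\subset\Pi^{\circ}$. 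This requires carrying through both branches of your case split: for $\lambda_{1}\left(c\right)\ne0$ one must show via the alcove geometry of the translations \ref{eq:translation} that one necessarily lands on $X=\Pi^{\circ}$ (and in particular that a wall of the alcove through $\alpha_{0}$ causes no trouble), and for $\lambda_{1}\left(c\right)=0$ one must verify that a dominant $\lambda_{1}^{\circ}$ with the prescribed sign data defines the same partition of $\Delta$ as the specific combination of fundamental weights with the $\omega_{\alpha_{0}}$-correction built into \ref{eq:hyperplane defining weights}. None of this is done.

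Step (b) has a second, smaller gap of the same kind: the induction on $\Delta_{X}=\Delta_{0}^{\flat}\left(\phi_{X}\right)$ only has the stabilizer of $\phi_{X}$ in $\mathcal{W}$ at its disposal, and your assertion that a non-standard induced positive system on $\Delta_{X}$ ``would have been absorbed into a different choice of $X$ at the previous step'' is precisely the claim that needs proof, not a reduction of it; one must exhibit the coarser functional and re-run the (unproved) normalization of (a) on it. As written, the proposal reproduces the shape of the argument from the cited sources while deferring exactly the normalization lemmas that make it work, so it cannot be accepted as a proof of the theorem.
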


\subsection{\label{sec:2.2}Triangular decompositions and parabolic systems}

If $P$ is a parabolic system for $\mathfrak{g}$ with Cartan algebra
$\mathfrak{h}$ and root system $\Delta$, then we can define $\mathfrak{p}\left(P\right)=\mathfrak{h}+\sum_{\alpha\in P}\mathfrak{g}_{\alpha}=\mathfrak{g}_{0}+\mathfrak{g}_{P}$
which is a subalgebra of $\mathfrak{g}$. It turns out to be a parabolic
subalgebra in the commonly defined sense. 
\begin{defn}
\noindent A triple $\left(\mathfrak{g}_{+},\mathfrak{g}_{0},\mathfrak{g}_{-}\right)$
of subalgebras of $\mathfrak{g}$ defines a\emph{ split triangular
decomposition}, if there exist subsets $\Delta_{\pm},\Delta_{0}\subseteq\Delta$
such that (i) $\Delta=\Delta_{+}\dot{\cup}\Delta{}_{0}\dot{\cup}\Delta_{-}$
is a partition of $\Delta$, (ii) $\mathfrak{g}_{\pm}=\sum_{\alpha\in\Delta_{\pm}}\mathfrak{g}_{\alpha}$
and $\mathfrak{g}_{0}=\mathfrak{h}+\sum_{\alpha\in\Delta_{0}}\mathfrak{g}_{\alpha}$,
(iii) $\left[\mathfrak{g}_{0},\mathfrak{g}_{\pm}\right]\subset\mathfrak{g}_{\pm}$,
and (iv): if $\alpha_{1},\dots,\alpha_{n}\in\Delta_{+}$ or $\alpha_{1},\dots,\alpha_{n}\in\Delta_{-}$,
then $\sum_{i=1}^{n}\alpha_{i}\ne0$ for $n>0$.

A parabolic system $P$ is called \emph{principal}, if there exists
a split triangular decomposition such that $P=\Delta_{+}\dot{\cup}\Delta{}_{0}$
\cite{DimitrovGrantscharov}.
\end{defn}

Every linear functional $\lambda\in\mathfrak{h}^{*}$ determines a
split triangular decomposition by putting 
\begin{equation}
\Delta_{\pm}^{\flat}\left(\lambda\right)=\left\{ \pm\alpha\in\Delta\mid\lambda\left(\alpha^{\sharp}\right)>0\right\} \quad\mbox{and}\quad\Delta_{0}^{\flat}\left(\lambda\right)=\left\{ \alpha\in\Delta\mid\lambda\left(\alpha^{\sharp}\right)=0\right\} .\label{eq: triangular weight decomposition}
\end{equation}
Clearly $\Delta_{\pm}^{\flat}\left(\lambda\right)\dot{\cup}\Delta_{0}^{\flat}\left(\lambda\right)$
is a parabolic system, which are all principal. Recall that
\[
\Delta_{\pm}^{\flat}\left(\lambda\right)=\left(\lambda^{-1}\left(\mathbb{Z}_{\pm}\right)\right)^{\flat}\cap\Delta\quad\mbox{and}\quad\Delta_{0}^{\flat}\left(\lambda\right)=\left(\ker\lambda\right)^{\flat}\cap\Delta.
\]
The scalar $\lambda\left(\delta^{\sharp}\right)=\lambda\left(c\right)$
is called the central charge of $\lambda$.

The theory of parabolic systems of finite rank root systems is completely
governed by a pair of subsets, $S$ and $X$ $\subset\Pi^{\circ}$,
and can be described in terms of three weights. To begin with, define
\begin{equation}
P\left(\lambda_{1},\lambda_{2},\lambda_{3}\right)=\Delta_{+}^{\flat}\left(\lambda_{1}\right)\cup\left(\Delta_{0}^{\flat}\left(\lambda_{1}\right)\cap\left(\Delta_{+}^{\flat}\left(\lambda_{2}\right)\cup\Delta_{+}^{\flat}\left(\lambda_{3}\right)\right)\right)\label{eq:parabolic from triple}
\end{equation}
for a triple of weights $\left(\lambda_{1},\lambda_{2},\lambda_{3}\right)$.
With \ref{eq:hyperplane defining weights}, we can write
\begin{align}
 & P\left(X,-S\right)=P\left(\phi_{X},\lambda_{-S},\phi_{\Pi^{\circ}}\right),\;P_{S}\left(\Pi^{\circ}\right)=P\left(\phi_{\Pi^{\circ}},\lambda_{-S},\lambda_{-\left\{ \alpha_{0}\right\} }\right)\;\text{ and }\;P_{S}=P\left(\lambda_{\Pi^{\circ}\smallsetminus S},\phi_{\Pi^{\circ}},\phi_{-\Pi^{\circ}}\right).\label{eq:parabolic_sets_new}
\end{align}
\begin{thm}
Any parabolic system which is not a positive system is equivalent
to either\\
\begin{minipage}[t]{0.9\columnwidth}%
\begin{itemize}
\item[(i) ]$P\left(\phi_{X},\lambda_{-S},\lambda\right)$ with 

\begin{itemize}
\item[(a) ]$\emptyset\subsetneq S\subset X\subsetneq\Pi^{\circ}$ and $\lambda=\phi_{\Pi^{\circ}}$
or with 

\item[(b) ]$S\subsetneq X=\Pi^{\circ}$ and $\lambda=\lambda_{\left\{ -\alpha_{0}\right\} }$,
or to

\end{itemize}

\item[(ii) ] $P\left(\lambda_{\Pi^{\circ}\smallsetminus S},\phi_{\Pi^{\circ}},\phi_{-\Pi^{\circ}}\right)$
with $S\subset\Pi^{\circ}$.

\end{itemize}%
\end{minipage} 

If the parabolic system does not contain $-\delta$, it falls in the
classes (i), if it contains $-\delta$, in class (ii).
\end{thm}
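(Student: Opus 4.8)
The plan is to reduce the classification to the known classification of positive systems (Theorem~\ref{thm:positive system by tuple}) together with the finite-rank theory recalled above, using the symmetric part $\Delta_{0}:=P\cap(-P)$ as the organising datum and splitting according to whether both $\pm\delta$ lie in $P$ (i.e.\ $\delta\in\Delta_{0}$) or only one of them does; in the latter case one normalises by the sign in $\mathcal{W}\times\{\pm1\}$ so that $\delta\in P$ and $-\delta\notin P$.

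First I would observe that $\Delta_{0}$ is a closed symmetric subset of $\Delta$, hence a sub-root-system, and that for any positive system $\Delta_{0}^{+}$ of $\Delta_{0}$ the set $Q:=(P\smallsetminus\Delta_{0})\cup\Delta_{0}^{+}$ is a positive system of $\Delta$ with $Q\subset P$; additive closure and pointedness of $Q$ follow directly from the parabolicity (indeed principality) of $P$ and the pointedness of $\Delta_{0}^{+}$, and conversely every positive system contained in $P$ is of this shape. By Theorem~\ref{thm:positive system by tuple} we may then apply an element of $\mathcal{W}\times\{\pm1\}$ and assume $Q=\Delta_{+}(Y)$ for some $Y\subset\Pi^{\circ}$, so that $\Delta_{+}(Y)\subset P\subset\Delta$; it remains to list the parabolic systems lying over $\Delta_{+}(Y)$.

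The dichotomy now enters through the shape of $\Delta_{0}$. If $-\delta\notin P$, then $\Delta_{0}$ contains no imaginary root (two real roots of $\Delta_{0}$ with the same classical component would force a nonzero imaginary root into $\Delta_{0}$), so each classical component occurs at most once and $\Delta_{0}$ is finite. The classical fact that a parabolic subset lying over a fixed positive system of a finite root system is standard then shows that $\Delta_{0}$ is generated by a subset $\Theta$ of the base of $\Delta_{+}(Y)$, whence $P=\Delta_{+}(Y)\cup\bigl(-(\Delta\cap\mathbb{Z}\Theta)\bigr)$. Encoding the pair $(Y,\Theta)$ as a pair $S\subset X\subset\Pi^{\circ}$, with $X$ recording $Y$ and $S$ the Levi nodes lying in $\Delta_{0}$, and translating through \eqref{eq:parabolic from triple} and \eqref{eq:parabolic_sets_new} gives the forms in~(i); the sub-cases (a)/(b) reflect whether the affine node $\alpha_{0}$ belongs to the Levi, i.e.\ whether $X=\Pi^{\circ}$, in which case $\phi_{\Pi^{\circ}}$ is replaced by $\lambda_{\{-\alpha_{0}\}}$ as the third weight. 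If instead $-\delta\in P$, then $\mathbb{Z}\delta\smallsetminus\{0\}\subset\Delta_{0}$: one passes to the finite root system $\Delta_{0}/(\Delta_{0}\cap\mathbb{Z}\delta)$, classifies its parabolic subsets by a single $S\subset\Pi^{\circ}$, lifts back, and records the central direction by $\lambda_{\Pi^{\circ}\smallsetminus S}$, arriving at the form in~(ii). In every case one finally checks, using \eqref{eq:hyperplane defining weights} and the translation formula \eqref{eq:translation}, that the displayed triple of weights does reproduce $P$ and that the stated ranges of $(S,X)$ are precisely those for which $P$ is a parabolic, non-positive system (forcing $\emptyset\subsetneq S$ and $X\subsetneq\Pi^{\circ}$ in case~(i)(a), and so on).

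The main obstacle is the standardness step when $-\delta\in P$. Unlike in finite type, the affine base contains $\delta=\sum_{\alpha\in\Pi}k_{\alpha}\alpha$ with every $k_{\alpha}>0$, so $\delta$ is never a non-negative combination of a proper subset of the simple roots; thus $\Delta_{0}\supset\mathbb{Z}\delta\smallsetminus\{0\}$ is never generated by a proper standard subset of the base of $\Delta_{+}(Y)$, and one cannot argue over $\Delta_{+}(Y)$ directly. Instead one must first apply a translation $t_{\beta}$ from \eqref{eq:translation} that moves $\delta$ off the relevant wall, and then verify that additive closure and symmetry still force the nilpotent part $P\smallsetminus\Delta_{0}$ to be exactly one half of the complement of $\Delta_{0}$. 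Carrying this out while keeping the translation consistent with the Weyl-group normalisation of $Q$ and with the freedom in the choice of $\Delta_{0}^{+}$ is the delicate bookkeeping; once it is in place the identification with the families \eqref{eq:parabolic_sets_new} is a direct computation.
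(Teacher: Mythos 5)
Your route is genuinely different from the paper's, and the difference matters for assessing it. The paper does not reprove the classification of parabolic systems at all: its entire proof is the verification that the weight-triple sets of \ref{eq:parabolic_sets_new} coincide with Futorny's explicitly described families $P\left(X,S\right)$, $P_{S}\left(\Pi^{\circ}\right)$ and $P_{S}$, after which the theorem is quoted as a corollary of \cite[Theorems 2.5 and 2.6]{Futorny1997}. That identification, a page of set-theoretic computation with $\Delta_{\pm}^{\flat}$ and $\Delta_{0}^{\flat}$ via \ref{eq:parabolic from triple}, is exactly the step you compress into ``translating through'' the definitions and ``a direct computation''; so the one thing the paper actually proves is absent from your argument, while the thing the paper merely cites is what you attempt to reprove.

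The from-scratch classification you sketch in its place has concrete gaps. First, pointedness of $Q=\left(P\smallsetminus\Delta_{0}\right)\cup\Delta_{0}^{+}$ in the sense required by the paper's definition of a positive system ($\mathrm{span}_{\mathbb{N}_{0}}Q\cap-\mathrm{span}_{\mathbb{N}_{0}}Q=\left\{ 0\right\}$) does not ``follow directly from parabolicity'': additive closure only controls sums that are themselves roots, and your parenthetical appeal to principality of $P$ assumes precisely the nontrivial fact (every parabolic system is principal) that would need independent proof and is close to what is being classified. Second, in the case $-\delta\notin P$ you invoke ``the classical fact that a parabolic subset lying over a fixed positive system of a finite root system is standard''; but the ambient positive system $\Delta_{+}\left(Y\right)$ is an infinite positive system of the affine system $\Delta$, which for general $Y$ has no finite base, so the finite-type standardness theorem does not apply as stated, and reducing to the finite subsystem $\Delta_{0}$ alone does not determine how $P\smallsetminus\Delta_{0}$ sits over it. Third, you explicitly leave the case $-\delta\in P$ as ``delicate bookkeeping'' involving the translations \ref{eq:translation}; that is the hardest case and it is not carried out. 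As written the proposal neither completes the classification nor performs the identification with the weight triples, so it does not establish the theorem; the most economical repair is to do what the paper does, namely prove the coincidence of \ref{eq:parabolic_sets_new} with Futorny's sets and cite his classification.
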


\begin{proof}
We need to show that the sets \ref{eq:parabolic_sets_new} coincide
with the ones defined in \cite[Sec. 2.]{Futorny1997}: 
\begin{align*}
P\left(X,S\right) & =\Delta_{+}\left(X\right)\cup\mathrm{add}_{\Delta}\left(-S\right)\quad\left(S\subset X\subset\Pi^{\circ},\,S\ne\emptyset\right)\\
P_{S}\left(\Pi^{\circ}\right) & =\mathrm{add}_{\Delta}\left(\Delta_{+}\left(\Pi^{\circ}\right)\cup\left(-S\right)\cup\left\{ -\alpha_{0}\right\} \right)\\
P_{S} & =\mathrm{add}_{\Delta}\left(\Delta_{+}\left(\emptyset\right)\cup\left(-S\right)\cup\left\{ -\delta\right\} \right)
\end{align*}

With tuples of weights, we have defined $\Delta_{+}\left(\lambda_{1},\lambda_{2}\right)=\Delta_{+}^{\flat}\left(\lambda_{1}\right)\cup\left(\Delta_{0}^{\flat}\left(\lambda_{1}\right)\cap\Delta_{+}^{\flat}\left(\lambda_{2}\right)\right)$
in \ref{eq:positive system by tuple} and inside the right hand side
we find 
\[
\Delta_{+}\left(X\right)=\Delta_{+}\left(\phi_{X},\phi_{\Pi^{\circ}}\right)=\Delta_{+}^{\flat}\left(\phi_{X}\right)\cup\left(\Delta_{0}^{\flat}\left(\phi_{X}\right)\cap\Delta_{+}^{\flat}\left(\phi_{\Pi^{\circ}}\right)\right).
\]
 Plugging \ref{eq:parabolic from triple} in \ref{eq:parabolic_sets_new},
for the left hand side 
\[
P\left(X,S\right)=\Delta_{+}^{\flat}\left(\phi_{X}\right)\cup\left(\Delta_{0}^{\flat}\left(\phi_{X}\right)\cap\left(\Delta_{0}^{\flat}\left(\phi_{-S}\right)\cup\Delta_{+}^{\flat}\left(\phi_{\Pi^{\circ}}\right)\right)\right),
\]
we obtain as the most inner term: $\Delta_{+}^{\flat}\left(\lambda_{-S}\right)\cup\Delta_{+}^{\flat}\left(\phi_{\Pi^{\circ}}\right)$.
This term contains both, $\mathrm{add}_{\Delta}\left(-S\right)$ and
$\Delta_{+}^{\flat}\left(\phi_{\Pi}\right)$. Since $S\subset X$
implies $\mathrm{add}_{\Delta}\left(-S\right)\subset\Delta_{+}^{\flat}\left(\phi_{X}\right)\cup\Delta_{0}^{\flat}\left(\phi_{X}\right)$,
the first coincidence follows.

For $P_{S}\left(\Pi^{\circ}\right)$ it is sufficient to see that
$\Delta_{0}^{\flat}\left(\phi_{\Pi^{\circ}}\right)\cap\left(\Delta_{+}^{\flat}\left(\lambda_{-S}\right)\cup\Delta_{+}^{\flat}\left(\lambda_{\left\{ -\alpha_{0}\right\} }\right)\right)$
contains\linebreak{}
 $\mathrm{add}_{\Delta}\left(\Delta^{\circ}\left(S\right)\cup\left\{ -\alpha_{0}\right\} \right)$,
taking into consideration the fact that $-\alpha_{0}=\frac{1}{a_{0}}\left(\theta-\delta\right)\in\Delta_{0}^{\flat}\left(\phi_{\Pi^{\circ}}\right)$.

For $P_{S}$, we have $\Delta_{0}^{\flat}\left(\lambda_{S^{c}}\right)=\Delta_{0}^{\flat}\left(\sum_{\alpha\in\Pi^{\circ}\smallsetminus S}\omega_{\alpha}\right)=\left(\mathrm{add}_{\Delta}\left(\pm S\right)+\mathbb{Z}\delta\right)\cup\mathbb{Z}_{\ne0}\delta$
and 
\begin{eqnarray*}
\Delta_{+}^{\flat}\left(\lambda_{S^{c}}\right) & = & \left(\Delta_{+}^{\circ}\smallsetminus\mathrm{add}_{\Delta^{\circ}}S+\Delta_{0}^{\flat}\left(\lambda_{S^{c}}\right)\cup\left\{ 0\right\} \right)\cap\Delta\\
 & = & \Delta_{+}^{\circ}\smallsetminus\mathrm{add}_{\Delta^{\circ}}S+\left(\left(\mathrm{add}_{\Delta}\left(\pm S\right)+\mathbb{Z}\delta\right)\cup\mathbb{Z}_{\ne0}\delta\cup\left\{ 0\right\} \right)\cap\Delta\\
 & = & \left(\Delta_{+}^{\circ}\smallsetminus\mathrm{add}_{\Delta^{\circ}}S+\mathrm{add}_{\Delta^{\circ}}S\right)\cap\Delta^{\circ}+\mathbb{Z}\delta,
\end{eqnarray*}
 furthermore, 
\[
\Delta_{+}^{\flat}\left(\phi_{\Pi^{\circ}}\right)\cup\Delta_{+}^{\flat}\left(\phi_{-\Pi^{\circ}}\right)=\Delta,
\]
because $\Delta_{+}^{\flat}\left(\phi_{\Pi^{\circ}}\right)$ is a
positive system by \ref{thm:positive system by tuple}. Thus, the
left hand side reads
\begin{eqnarray*}
 &  & \Delta_{+}^{\flat}\left(\lambda_{S^{c}}\right)\cup\left(\Delta_{0}^{\flat}\left(\lambda_{S^{c}}\right)\cap\left(\Delta_{0}^{\flat}\left(\phi_{\Pi^{\circ}}\right)\cup\Delta_{+}^{\flat}\left(\phi_{-\Pi^{\circ}}\right)\right)\right)\\
 &  & =\left(\Delta_{+}^{\circ}\smallsetminus\mathrm{add}_{\Delta^{\circ}}S+\mathrm{add}_{\Delta^{\circ}}S\right)\cap\Delta^{\circ}+\mathbb{Z}\delta\cup\left(\left(\left(\mathrm{add}_{\Delta}\left(\pm S\right)+\mathbb{Z}\delta\right)\cup\mathbb{Z}_{\ne0}\delta\right)\cap\Delta\right)\\
 &  & =\left(\left(\Delta_{+}^{\circ}\smallsetminus\mathrm{add}_{\Delta^{\circ}}S+\mathrm{add}_{\Delta^{\circ}}S\right)\cap\Delta^{\circ}+\mathbb{Z}\delta\right)\cup\left(\mathrm{add}_{\Delta}\left(\pm S\right)+\mathbb{Z}\delta\right)\cup\mathbb{Z}_{\ne0}\delta\\
 &  & =\left(\Delta_{+}^{\circ}\cup\mathrm{add}_{\Delta}\left(-S\right)+\mathbb{Z}\delta\right)\cup\mathbb{Z}_{\ne0}\delta,
\end{eqnarray*}
and the right hand side 
\begin{eqnarray*}
\mathrm{add}_{\Delta}\left(\Delta_{+}\left(\emptyset\right)\cup\left(-S\right)\cup\left\{ -\delta\right\} \right) & = & =\mathrm{add}_{\Delta}\left(\left(\Delta_{+}^{\circ}+\mathbb{Z}\delta\right)\cup\mathbb{Z}_{+}\delta\cup\left(-S\right)\cup\left\{ -\delta\right\} \right)\mathrm{add}_{\Delta}\left(\Delta_{+}^{\circ}\cup\left\{ \pm\delta\right\} \cup-S\right)
\end{eqnarray*}
and the coincidence is demonstrated. Now the theorem is a corollary
of \cite[Theorems 2.5 and 2.6]{Futorny1997}.
\end{proof}
From the description it is also clear, that in the case $S=\Pi^{\circ}$
the parabolic systems $P_{S}\left(\Pi^{\circ}\right)$ and $P\left(\Pi^{\circ},S\right)$
coincide.

\subsection{Parabolic subalgebras}

Let $\left(\cdot,\cdot\right)$ be the standard form on $\mathfrak{g}$.
The \emph{(standard) Hermitian form} on $\mathfrak{g}$ is given by
$\left(x,y\right)_{0}=\left(\sigma_{0}\left(x\right),y\right)$. A
unitary involution $\hat{\sigma}$ is given by the negative Chavelley
involution and defined as $\hat{\sigma}\left(x_{\alpha}\right)=x_{-\alpha}$
for $\alpha\in\Delta,\,x_{\alpha}\in\mathfrak{g}$ and $\hat{\sigma}\left(h\right)=h$
for $h\in\mathfrak{h}$. If $\sigma:\Delta\to\Delta$ is the linear
involutive automorphism defined by $\sigma\left(\beta\right)=-\beta$
for any $\beta\in\Delta$, then both $\sigma_{0}$ and $\hat{\sigma}$
are functorial extensions to $\mathfrak{g}$.
\begin{defn}
\label{def:Borel-type subalgebra}A subalgebra $\mathfrak{b}$ is
called \emph{Borel-type subalgebra} if $\hat{\sigma}\left(\mathfrak{b}\right)\cap\mathfrak{b}=\mathfrak{h}$.
It is called \emph{Borel subalgebra} if there is a positive system
$\Delta_{+}$ such that $\mathfrak{b}=\mathfrak{h}\oplus\sum_{\beta\in\Delta_{+}}\mathfrak{g}_{\beta}$. 
\end{defn}

In contrast to Borel subalgebras of finite dimensional Lie algebras,
the above definition admits Borel-type subalgebras that do not correspond
to positive systems of $\Delta$ (cf. \cite{BekkertBenkartFutornyKashuba}). 
\begin{defn}
\label{def:parabolic subalgebra}The subalgebra $\mathfrak{p}\left(P\right)=\mathfrak{h}\oplus\sum_{\beta\in P}\mathfrak{g}_{\beta}$
for the parabolic system $P\subset\Delta$, is called \emph{parabolic
subalgebra}. Additionally, $\mathfrak{p}$ is called \emph{maximally
parabolic,} if it is maximal as a proper subalgebra. 
\end{defn}

Introduce the \emph{derived algebra} $\mathfrak{g}'=\left[\mathfrak{g},\mathfrak{g}\right]$
(cf. \cite[p. 335]{Carter2005}) and the \emph{derived algebra} \emph{related
to the parabolic system} $P$, given by $\mathfrak{g}'_{P}=\left[\mathfrak{g}_{P\cap\sigma\left(P\right)},\mathfrak{g}_{P\cap\sigma\left(P\right)}\right]$.
Let $B$ be a basis for $P\cap\sigma\left(P\right)$, then in root
space decomposition this writes as
\[
\mathfrak{g}'_{P}=\mathrm{span}_{\mathbb{C}}\check{B}\otimes\mathbb{C}\left[t,t^{-1}\right]\oplus\sum_{\beta\in P\cap\sigma\left(P\right)\cap\Delta_{\mathrm{i}}}\mathfrak{g}_{\beta}\,.
\]
If $\delta\in P\cap\sigma\left(P\right)$, then $d=\check{\delta}\in\mathrm{span}_{\Bbbk}\check{B}$.
The \emph{Heisenberg subalgebra} of $\mathfrak{g}$ is the sum of
the isotropic root spaces and the center, 
\[
\mathcal{H}=\sum_{\alpha\in\mathbb{Z}_{\ne0}\delta}\mathfrak{g}_{\alpha}\oplus\mathbb{C}c.
\]
If $\mathfrak{g}$ is realized as affinization of a split simple Lie
algebra $\left(\mathfrak{g}^{\circ},\mathfrak{h}^{\circ},\kappa^{\circ}\right)$
(cf. Section 1.2), then $\mathcal{H}=\mathcal{L}\left(\mathfrak{h}^{\circ}\right)\oplus_{\omega_{D}}\Bbbk$.
With this we can introduce a subalgebra of the Heisenberg subalgebra
$\mathcal{H}_{\Pi\ominus B}\subset\mathcal{H}$ \index{mathcal{H}_{Piominus B}, Heisenberg algebra, subalgebra of the@$\mathcal{H}_{\Pi\ominus B}$, Heisenberg algebra, subalgebra of the}
defined by the relations $\left[\mathfrak{g}'_{P},\mathcal{H}_{\Pi\ominus B}\right]=0$
and $\mathcal{H}_{\Pi\ominus B}+\left(\mathfrak{g}'_{P}\cap\mathcal{H}\right)=\mathcal{H}$.
These relations are satisfied by 
\[
\mathcal{H}_{\Pi\ominus B}=\mathrm{span}_{\Bbbk}\left(\Psi\right)^{\vee}\otimes\Bbbk\left[t,t^{-1}\right]\oplus\Bbbk c\text{ for }\Psi=\left\{ \psi\in\Pi\mid\left(\psi,B\right)=\left\{ 0\right\} \right\} .
\]
\begin{thm}
\cite[Th. 3.3]{Futorny1997} \label{thm:parabolic induction}Let $\mathfrak{g}$
be the affine Lie algebra and $\Delta$ its root system and $P$ be
an arbitrary parabolic system in $\Delta$. \\
\begin{minipage}[t]{0.9\columnwidth}%
\begin{itemize}
\item[(i) ]

The subalgebra $\mathfrak{p}\left(P\right)$ of $\mathfrak{g}$ has
a decomposition 
\[
\mathfrak{p}\left(P\right)=\mathcal{L}\oplus\mathcal{N}\mbox{, where }\mathcal{N}=\mathfrak{g}_{P\smallsetminus\sigma\left(P\right)}
\]
and $\mathcal{L}$ is one of the following types,\begin{itemize}
\item[ {\scriptsize \bf (I)} ]

a locally finite Lie algebra or

\item[ {\scriptsize \bf (II)} ] $\mathcal{L}=\left(\mathfrak{g}'_{P}+\mathcal{H}_{\Pi\ominus B}\right)+\mathfrak{h}$.

\end{itemize} 

\item[(ii) ]

$\mathfrak{g}$ has a split triangular decomposition associated to
$P$, 
\[
\mathfrak{g}=\mathcal{N}_{-}\oplus\mathcal{L}\oplus\mathcal{N}\mbox{, where }\mathcal{N}_{-}=\mathfrak{g}_{\sigma\left(P\right)\smallsetminus P}\,.
\]

\end{itemize} %
\end{minipage}
\end{thm}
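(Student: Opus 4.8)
The plan is to split the parabolic system into $P_{0}=P\cap\sigma\left(P\right)=P\cap\left(-P\right)$ and $P_{+}=P\smallsetminus\sigma\left(P\right)=P\smallsetminus\left(-P\right)$, so that (using $\sigma\left(\beta\right)=-\beta$) $P=P_{0}\,\dot{\cup}\,P_{+}$ with $P_{0}$ symmetric and $P_{+}$ free of pairs $\pm\alpha$. First I note that $P_{0}$ is additively closed — for $\alpha,\beta\in P_{0}$ with $\alpha+\beta\in\Delta$ both $\alpha+\beta$ and $-\alpha-\beta=\left(-\alpha\right)+\left(-\beta\right)$ lie in $P$ — so $\mathcal{L}:=\mathfrak{h}+\mathfrak{g}_{P_{0}}$ is a subalgebra. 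The key combinatorial fact is: \emph{if $\alpha\in P$, $\beta\in P_{+}$ and $\alpha+\beta\in\Delta$, then $\alpha+\beta\in P_{+}$} — indeed $\alpha+\beta\in P$ by additive closure, and were $\alpha+\beta\in-P$ then $-\beta=\bigl(-\left(\alpha+\beta\right)\bigr)+\alpha$ would be a root in $\left(P+P\right)\cap\Delta\subseteq P$, contradicting $\beta\notin-P$. Taking $\alpha\in P_{0}$ (with $\left[\mathfrak{h},\mathfrak{g}_{\beta}\right]\subseteq\mathfrak{g}_{\beta}$) gives $\left[\mathcal{L},\mathcal{N}\right]\subseteq\mathcal{N}$ for $\mathcal{N}:=\mathfrak{g}_{P_{+}}$, and taking $\alpha\in P_{+}$ shows $\mathcal{N}$ is closed under the bracket; hence $\mathcal{N}$ is an ideal of $\mathcal{L}+\mathcal{N}=\mathfrak{h}+\mathfrak{g}_{P}=\mathfrak{p}\left(P\right)$, and the sum is direct on weight components since $P_{0}\cap P_{+}=\emptyset$. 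This establishes the decomposition of (i) up to the shape of $\mathcal{L}$.

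To identify $\mathcal{L}$, I first observe that a $\mathcal{W}\times\left\{\pm1\right\}$-equivalence merely conjugates $\left(\mathfrak{p}\left(P\right),\mathcal{L},\mathcal{N}\right)$ by an automorphism of $\mathfrak{g}$, so I may assume $P$ is a positive system or one of the normal forms of the classification theorem above. If $P$ is a positive system then $P_{0}=\emptyset$ and $\mathcal{L}=\mathfrak{h}$, which is finite-dimensional, a fortiori locally finite: case~(I). Otherwise $P$ is equivalent to a normal form, and — as recorded in that theorem — it omits $-\delta$ precisely in the classes~(i). If $-\delta\notin P$: since $\delta\in P$ and $P$ is additively closed, $\left(n-1\right)\delta\in P$ for all $n\ge1$, so $-n\delta\in P$ would force $-\delta=\left(-n\delta\right)+\left(n-1\right)\delta\in P$, a contradiction; thus $P_{0}$ contains no imaginary root. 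A symmetric additively closed set of \emph{real} affine roots is finite — for fixed $\bar{\alpha}\in\Delta^{\circ}$ it cannot contain two distinct $\bar{\alpha}+n\delta,\bar{\alpha}+m\delta$, else adding the first to $-\left(\bar{\alpha}+m\delta\right)$ forces the imaginary root $\left(n-m\right)\delta$ into $P_{0}$ — so $\mathcal{L}=\mathfrak{h}+\mathfrak{g}_{P_{0}}$ is finite-dimensional: again case~(I).

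If instead $-\delta\in P$ (the classes~(ii)), then $\pm\delta\in P_{0}$, so by additive closure $\mathbb{Z}_{\ne0}\delta\subseteq P_{0}$, whence $\mathcal{L}\supseteq\mathcal{H}=\sum_{n\in\mathbb{Z}_{\ne0}}\mathfrak{g}_{n\delta}\oplus\mathbb{C}c$, and (again by closure, as $\pm\delta\in P_{0}$) the real roots of $P_{0}$ form full $\delta$-strips. Consequently $\mathfrak{g}'_{P}=\left[\mathfrak{g}_{P_{0}},\mathfrak{g}_{P_{0}}\right]$ contains every real root space $\mathfrak{g}_{\gamma}$ with $\gamma\in P_{0}$, because $\gamma-\delta,\delta\in P_{0}$ and $\left[\mathfrak{g}_{\gamma-\delta},\mathfrak{g}_{\delta}\right]$ is a non-zero, hence the whole, subspace of the one-dimensional $\mathfrak{g}_{\gamma}$; combining this with the defining relation $\left(\mathfrak{g}'_{P}\cap\mathcal{H}\right)+\mathcal{H}_{\Pi\ominus B}=\mathcal{H}$ yields $\mathfrak{g}'_{P}+\mathcal{H}_{\Pi\ominus B}\supseteq\mathfrak{g}_{P_{0}}+\mathcal{H}$, while the reverse inclusion $\mathfrak{g}'_{P}+\mathcal{H}_{\Pi\ominus B}+\mathfrak{h}\subseteq\mathfrak{h}+\mathfrak{g}_{P_{0}}$ is clear from $\mathcal{H}_{\Pi\ominus B}\subseteq\mathcal{H}$ and the bracket relations. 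Hence $\mathcal{L}=\left(\mathfrak{g}'_{P}+\mathcal{H}_{\Pi\ominus B}\right)+\mathfrak{h}$: case~(II). The hard part is exactly this transition from the combinatorics of the normal forms to the Lie-algebraic statement — verifying the asserted root-space description of $\mathfrak{g}'_{P}$ and that $\mathcal{H}_{\Pi\ominus B}$ supplies precisely the complement of $\mathfrak{g}'_{P}\cap\mathcal{H}$ in $\mathcal{H}$.

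Finally, for (ii) I put $\mathcal{N}_{-}:=\mathfrak{g}_{\sigma\left(P\right)\smallsetminus P}=\mathfrak{g}_{-P_{+}}=\hat{\sigma}\left(\mathcal{N}\right)$. From $P\cup\left(-P\right)=\Delta$, $P\cap\left(-P\right)=P_{0}$ and $P_{+}\cap\left(-P_{+}\right)=\emptyset$ I get the partition $\Delta=\left(-P_{+}\right)\,\dot{\cup}\,P_{0}\,\dot{\cup}\,P_{+}$, hence $\mathfrak{g}=\mathcal{N}_{-}\oplus\mathcal{L}\oplus\mathcal{N}$ as a vector space, and I claim this is a split triangular decomposition with $\Delta_{0}=P_{0}$, $\Delta_{\pm}=\pm P_{+}$. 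The first two defining conditions hold by construction; $\left[\mathcal{L},\mathcal{N}\right]\subseteq\mathcal{N}$ was proved above, and $\left[\mathcal{L},\mathcal{N}_{-}\right]\subseteq\mathcal{N}_{-}$ follows by applying $\hat{\sigma}$, which fixes $\mathcal{L}$ because $P_{0}=-P_{0}$ and $\hat{\sigma}\mid_{\mathfrak{h}}=\mathrm{id}$. For the fourth condition — no non-empty sum of elements of $P_{+}$ (respectively of $-P_{+}$) vanishes — I write the normal form of $P$ as $P\left(\lambda_{1},\lambda_{2},\lambda_{3}\right)$ in the sense of \ref{eq:parabolic from triple}: for $\alpha\in P_{+}$ the first of $\lambda_{1}\left(\alpha^{\sharp}\right),\lambda_{2}\left(\alpha^{\sharp}\right),\lambda_{3}\left(\alpha^{\sharp}\right)$ that is non-zero is positive (if $\lambda_{1}\left(\alpha^{\sharp}\right)=0$ and, say, $\lambda_{2}\left(\alpha^{\sharp}\right)<0$, then $-\alpha\in\Delta_{0}^{\flat}\left(\lambda_{1}\right)\cap\Delta_{+}^{\flat}\left(\lambda_{2}\right)\subseteq P$, contradicting $\alpha\in P_{+}$; similarly the case $\lambda_{1}\left(\alpha^{\sharp}\right)=\lambda_{2}\left(\alpha^{\sharp}\right)=0$ forces $\lambda_{3}\left(\alpha^{\sharp}\right)>0$), so $P_{+}$ maps into the lexicographically positive cone of $\mathbb{R}^{3}$ under $\alpha\mapsto\left(\lambda_{1}\left(\alpha^{\sharp}\right),\lambda_{2}\left(\alpha^{\sharp}\right),\lambda_{3}\left(\alpha^{\sharp}\right)\right)$; evaluating $\lambda_{1}$, then $\lambda_{2}$, then $\lambda_{3}$ on a putative vanishing sum forces all coordinates to vanish and then leaves a strictly positive total, impossible unless the sum is empty. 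The case $-P_{+}$ is symmetric, and this gives the split triangular decomposition of $\mathfrak{g}$ associated to $P$.
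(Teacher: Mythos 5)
The paper does not actually prove this theorem: it is imported verbatim as \cite[Th.\ 3.3]{Futorny1997}, with no proof environment following the statement, so there is no "paper's own proof" to compare against. Judged on its own merits, your reconstruction is essentially correct and self-contained relative to the rest of the paper. The decomposition $P=P_{0}\,\dot{\cup}\,P_{+}$ with $P_{0}=P\cap(-P)$, the observation that $P_{0}$ is additively closed, and the key fact that $(P+P_{+})\cap\Delta\subseteq P_{+}$ are exactly the right combinatorial inputs, and your lexicographic argument via $P(\lambda_{1},\lambda_{2},\lambda_{3})$ for condition (iv) of the split triangular decomposition is clean and correctly handles the case $\lambda_{1}(\alpha^{\sharp})=0$, $\lambda_{2}(\alpha^{\sharp})<0$. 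Two points deserve tightening. First, in the case $-\delta\in P$ you assert $\pm\delta\in P_{0}$; this needs $\delta\in P$ as well, which holds for the class-(ii) normal forms (they contain $\mathrm{add}_{\Delta}(\Delta_{+}(\emptyset)\cup\cdots\cup\{-\delta\})\supseteq\mathbb{Z}_{+}\delta$) and is preserved by $\mathcal{W}\times\{\pm1\}$-equivalence since $\mathcal{W}$ fixes $\delta$, but you should say so rather than treat it as automatic. Second, the identification of $\mathcal{L}$ in case (II) leans on the paper's definitional stipulation that $\mathcal{H}_{\Pi\ominus B}+(\mathfrak{g}'_{P}\cap\mathcal{H})=\mathcal{H}$; you flag this yourself, and within the paper's framework that is legitimate, but it is the one place where the argument is borrowed rather than verified. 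Neither point is a genuine gap.
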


\section{Weight modules }

\subsection{Induced Representations}

Consider a subset $S\subset\Pi^{\circ}$ and define the subalgebra
\[
\mathfrak{g}\left(S\right)=\mathrm{cl}_{\mathfrak{g}}\left\langle \mathfrak{g}_{\varphi},\mathfrak{g}_{-\varphi}\mid\varphi\in S\right\rangle \subset\mathfrak{g}^{\circ}
\]
\begin{defn}
\label{def: Levi and nilpotent}The \emph{Levi subalgebra} associated
with $S\subset\Pi^{\circ}$ is the finite-dimensional reductive Lie
algebra $\mathcal{L}_{S}=\mathfrak{h}+\mathfrak{g}\left(S\right)$.
Denote for $S\subset\Pi^{\circ}$ 
\[
\mathcal{N}_{S^{c}}^{\pm}=\sum_{\varphi\in\Delta_{+}\smallsetminus\mathsf{add}_{\Delta}S}\mathfrak{g}_{\pm\varphi},\quad\mathcal{P}_{S}^{\pm}=\mathcal{L}_{S}\oplus\mathcal{N}_{S^{c}}^{\pm}\;.
\]
\end{defn}

Start with a semisimple Levi component $\mathcal{L}_{S}=\mathfrak{h}+\mathfrak{g}\left(S\right)$
for a subset $S\subset\Pi^{\circ}$. Because $\mathcal{L}_{S}$ is
semisimple, it admits a triangular decomposition $\mathcal{L}_{S}=\mathcal{L}_{-}\oplus\mathcal{L}_{0}\oplus\mathcal{L}_{+}$.
Its universal enveloping algebra decomposes according to PBW Theorem,
$\mathcal{U}\left(\mathcal{L}_{S}\right)=\mathcal{U}\left(\mathcal{L}_{-}\right)\mathcal{U}\left(\mathcal{L}_{0}\right)\mathcal{U}\left(\mathcal{L}_{+}\right)$,
with a non-trivial center $Z_{S}\subset\mathcal{U}\left(\mathcal{L}\right)_{0}$.
Define the polynomial ring $T_{S}=\mathrm{Sym}\left(\mathfrak{h}\right)\otimes Z_{S}$.
Here $\mathrm{Sym}\left(\mathfrak{h}\right)\cong\mathcal{U}\left(\mathfrak{h}\right)\cong\mathbb{C}\left[\check{\Pi},c\right]$.
The generalized Harish-Chandra homomorphism associated to $S\subset\Pi^{\circ}$
is the projection $\phi_{\mathrm{HC}}^{\left(S\right)}:\mathcal{U}\left(\mathfrak{g}\right)\to T_{S}$
with respect to the decomposition 
\[
\begin{aligned}\mathcal{U}\left(\mathfrak{g}\right) & =\left(\mathcal{N}_{S^{c}}^{-}\mathcal{U}\left(\mathfrak{g}\right)+\mathcal{U}\left(\mathfrak{g}\right)\mathcal{N}_{S^{c}}^{+}\right)\oplus\left(\mathcal{L}_{-}\mathcal{U}\left(\mathcal{L}\right)+\mathcal{U}\left(\mathcal{L}\right)\mathcal{L}_{+}\right)\oplus T_{S}\end{aligned}
\,.
\]

For a parabolic system $P$ with subalgebra $\mathfrak{p}\left(P\right)$
that is of type {\scriptsize \bf (I)} according to Theorem \ref{thm:parabolic induction}
($\mathcal{L}$ locally finite), there is a Levi decomposition $\mathfrak{p}\left(P\right)=\mathcal{L}\oplus\mathcal{N}$
that meets the condition for $\mathcal{L}$ to be a Levi subalgebra.
Since the set $P\cap-P$ is additively closed in $\Delta$, we can
designate a basis $S$ for the positive root monoid inside $P\cap-P$.
Then $\mathcal{L}$ equals the Levi subalgebra $\mathcal{L}_{S}=\mathfrak{h}+\mathfrak{g}\left(S\right)$.
We can construct an irreducible $\mathcal{L}_{S}$-module as follows:

Fix $\lambda\in\mathfrak{h}\left(S\right)^{*}$ and $\gamma\in\mathbb{C}$.
Let $\mathbb{C}_{\lambda,\gamma}$ be a 1-dimensional $T_{S}$-module
with the action $\left(h\otimes z\right)v_{\lambda,\gamma}=\lambda\left(h\right)\gamma\left(z\right)v_{\lambda,\gamma}$
for $h\in\mathfrak{h}\left(S\right)$ and $z\in\mathbb{C}\left[\mathbf{z}_{S}\right]$,
and $\mathcal{L}_{+}v_{\lambda,\gamma}=0$. Then 
\[
V_{S}\left(\lambda,\gamma\right)=\mathcal{U}\left(\mathcal{L}_{S}\right)\underset{T_{S}}{\otimes}\mathbb{C}_{\lambda,\gamma}
\]
is a $\mathcal{L}_{S}$-module, which has a unique irreducible quotient.
Because $\mathfrak{g}\left(S\right)$ is semisimple, the irreducible
weight $\mathfrak{h}+\mathfrak{g}\left(S\right)$-modules with finite-dimensional
weight spaces are classified by S. Fernando and O. Mathieu (cf. \cite{Mathieu2000})
as being isomorphic to certain parabolically induced modules. Let
$\mathfrak{p}$ be a parabolic subalgebra of $\mathfrak{g}\left(S\right)$
and $W$ be a cuspidal $\mathfrak{p}$-module (see introduction or
Section 3.7 for definition), then the induced module 
\[
M_{\mathfrak{p}}\left(W\right)=\mathrm{Ind}_{\mathfrak{p}}^{\mathfrak{g}}W=\mathcal{U}\left(\mathfrak{g}\left(S\right)\right)\underset{\mathcal{U}\left(\mathfrak{p}\right)}{\otimes}W
\]
has a unique irreducible quotient $L_{\mathfrak{p}}\left(W\right)$.
\begin{thm}
\label{Prop: 3.6. Classifiaction of irrep L_S}\cite{Fernando1990}
If $V$ is an irreducible weight $\mathfrak{h}+\mathfrak{g}\left(S\right)$-module
with only finite-dimensional weight components, then $V$ is isomorphic
to $W$ or to $L_{\mathfrak{p}}\left(W\right)$ for some parabolic
subalgebra $\mathfrak{p}$ and some cuspidal $\mathfrak{p}$-module
$W$. 
\end{thm}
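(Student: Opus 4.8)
The plan is to recover the theorem from Fernando's analysis of weight modules over a semisimple Lie algebra \cite{Fernando1990}. First I would dispose of the non-semisimple directions of the Levi: writing $\mathfrak{h}=\mathfrak{z}\oplus\mathfrak{h}(S)$ with $\mathfrak{h}(S)$ the Cartan subalgebra of $\mathfrak{g}(S)$ and $\mathfrak{z}=\bigcap_{\varphi\in S}\ker\varphi$, the summand $\mathfrak{z}$ is central in $\mathcal{L}_{S}=\mathfrak{h}+\mathfrak{g}(S)$, so it acts on the irreducible module $V$ by a fixed linear functional, $V$ stays irreducible over $\mathfrak{g}(S)$, and conversely every pair consisting of an irreducible weight $\mathfrak{g}(S)$-module with finite-dimensional weight spaces and a character of $\mathfrak{z}$ gives back such a $V$. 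Hence it suffices to treat an irreducible weight module $V$, with finite-dimensional weight spaces, over the semisimple Lie algebra $\mathfrak{g}(S)$ with root system $\Delta_{S}$. The strategy is then: (1) read off a parabolic subalgebra $\mathfrak{p}=\mathcal{L}\oplus\mathcal{N}$ of $\mathfrak{g}(S)$ from the way root vectors act on $V$; (2) produce a primitive vector for $\mathfrak{p}$; (3) identify the $\mathcal{L}$-module it generates with a cuspidal module $W$ and recognise $V$ as the unique irreducible quotient $L_{\mathfrak{p}}(W)$ of $M_{\mathfrak{p}}(W)$.

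For step (1) I would first establish the dichotomy that each root vector $x_{\alpha}$ acts on $V$ either injectively or locally nilpotently: if $\ker x_{\alpha}\ne0$, pick a weight vector $v$ in it, note $V=\mathcal{U}(\mathfrak{g}(S))v$, and run a string argument along the $\mathfrak{sl}_{2}$-triple $\langle x_{\alpha},x_{-\alpha},\alpha^{\sharp}\rangle$ — which works precisely because the weight spaces are finite-dimensional — to spread local nilpotency of $x_{\alpha}$ over all of $V$. Setting $F=\{\alpha\mid x_{\alpha}\text{ locally nilpotent on }V\}$ and $\Sigma=\Delta_{S}\smallsetminus F$ (the roots whose vectors act injectively), the sets $\Sigma\cap(-\Sigma)$, $F\cap(-F)$, $F\smallsetminus(-F)$ and $(-F)\smallsetminus F$ partition $\Delta_{S}$. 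Using the dichotomy together with a reduction to rank-two subsystems I would check that $F$ is additively closed, that $F\cap(-F)$ is a closed symmetric set whose subalgebra acts locally finitely on $V$, and that a suitable choice of positive system $(F\cap(-F))^{+}$ makes $\mathcal{N}:=\sum_{\alpha}\mathfrak{g}_{\alpha}$, with $\alpha$ ranging over $(F\smallsetminus(-F))\cup(F\cap(-F))^{+}$, the nilradical of a parabolic subalgebra $\mathfrak{p}=\mathcal{L}\oplus\mathcal{N}$ whose Levi $\mathcal{L}$ has root system $\Sigma\cap(-\Sigma)$; by construction every root vector lying in $\mathcal{N}$ acts locally nilpotently on $V$ and every root vector of $\mathcal{L}$ acts injectively. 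I expect this combinatorial step — verifying that these injectivity patterns genuinely assemble into a parabolic system (closedness, the bracket relations $[\mathcal{L},\mathcal{N}]\subseteq\mathcal{N}$, existence of a separating functional $\mu$ with $\mu|_{\Sigma\cap(-\Sigma)}=0$ and $\mu>0$ on the roots of $\mathcal{N}$) — to be the main obstacle, and it is exactly where Fernando's rank-two case analysis enters.

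For step (2), $\mathcal{N}$ is a finite-dimensional nilpotent Lie algebra all of whose elements act locally nilpotently on $V$, hence it acts locally finitely, so $\mathcal{U}(\mathcal{N})v$ is finite-dimensional for every weight vector $v$ and a vector of maximal $\mu$-weight in it is a nonzero primitive vector $v_{0}$ with $\mathcal{N}v_{0}=0$. For step (3), I would put $W:=\mathcal{U}(\mathcal{L})v_{0}\subseteq V$; from $[\mathcal{L},\mathcal{N}]\subseteq\mathcal{N}$ one gets $\mathcal{N}W=0$, and $W$ inherits finite-dimensional weight spaces. Since $V=\mathcal{U}(\mathcal{N}_{-})W$ and $W$ is exactly the subspace of $V$ where $\mu$ attains its maximum (so $\mathcal{N}_{-}V$ meets $W$ in $0$), any proper $\mathcal{L}$-submodule $M\subsetneq W$ would generate a $\mathfrak{g}(S)$-submodule $\mathcal{U}(\mathcal{N}_{-})M$ of $V$ meeting $W$ precisely in $M$, hence nonzero and proper, contradicting irreducibility of $V$; so $W$ is irreducible over $\mathcal{L}$. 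As its root vectors all act injectively — they are restrictions to the $\mathcal{L}$-stable subspace $W$ of operators injective on $V$ — \cite[Corollary 3.7]{DimitrovMathieuPenkov} gives that $W$ is cuspidal, and we view it as a $\mathfrak{p}$-module via $\mathcal{N}W=0$. Finally, Frobenius reciprocity turns the inclusion $W\hookrightarrow V$ into a $\mathfrak{g}(S)$-homomorphism $M_{\mathfrak{p}}(W)\to V$, which is surjective because $v_{0}\in W$ generates $V$; since $V$ is irreducible it is isomorphic to the unique irreducible quotient $L_{\mathfrak{p}}(W)$. The case $F=\emptyset$ — all root vectors injective on $V$ — is precisely $\mathcal{N}=0$, $\mathfrak{p}=\mathfrak{g}(S)$, whence $V=W$ is cuspidal; in every other case $\mathfrak{p}$ is proper and $V\cong L_{\mathfrak{p}}(W)$, which is the asserted dichotomy.
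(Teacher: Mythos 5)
The paper gives no proof of this statement: it is quoted directly from Fernando \cite{Fernando1990} (see also Mathieu \cite{Mathieu2000}), so there is nothing internal to compare your argument against. Your proposal is a correct reconstruction of Fernando's original argument --- the injective/locally-nilpotent dichotomy, the partition of $\Delta_{S}$ into $\Sigma\cap(-\Sigma)$, $F\cap(-F)$, $F\smallsetminus(-F)$, $-(F\smallsetminus(-F))$, the parabolic built from it, and the identification of the top $\mu$-layer as a cuspidal $\mathcal{L}$-module --- and you correctly flag that the only genuinely hard step (closedness of $F$ and the existence of the separating functional, i.e.\ that the pattern of injectivity really assembles into a parabolic system) is exactly Fernando's rank-two case analysis, which you defer to the reference rather than reprove.
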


\subsection{Parabolic induction}
\begin{defn}
If $S_{k}$, $\left(k=1,\dots,N\right)$ are the connected components
of the partition $S\subset\Pi^{\circ}$ with $S=\bigcup_{k=1}^{N}S_{k}$,
then write 
\[
\widehat{\mathfrak{g}}\left(S_{k}\right)=\left(\mathfrak{g}\left(S_{k}\right)\otimes\mathbb{C}\left[t,t^{-1}\right]\oplus\mathbb{C}c\right)\rtimes\mathbb{C}d
\]
for the corresponding affine Lie algebra and denote the Lie subalgebra
$\widehat{\mathfrak{g}}\left(S\right)=\sum_{k=1}^{N}\widehat{\mathfrak{g}}\left(S_{k}\right)$. 
\end{defn}

If $P\subset\Delta$ is a parabolic system, then $P\cap\sigma\left(P\right)$
is additively closed in $\Delta$. It is thus possible to chose $S$
such that $P\cap\sigma\left(P\right)=\mathrm{span}_{\mathbb{Z}}S$
and meaningful to set $\mathcal{H}_{P}=\mathcal{H}\left(\check{S^{c}}\right)=\mathcal{H}\left(\check{\alpha}\mid\alpha\in S^{c}\right)\subset\mathcal{H}$.

Let $\mathfrak{p}=\mathfrak{p}\left(P\right)=\mathcal{L}\oplus\mathcal{N}$
be a parabolic subalgebra corresponding to a parabolic system $P$.
Given an irreducible weight $\mathcal{L}$-module $V$, we extend
the action to $\mathcal{N}$ trivially, obtaining an irreducible $\mathfrak{p}$-module
$V$. Construct the $\mathfrak{g}$-module 
\[
M_{\mathfrak{p}}=\mathrm{ind}_{\mathfrak{p}}^{\mathfrak{g}}\left(V\right)=\mathcal{U}\left(\mathfrak{g}\right)\underset{\mathcal{U}\left(\mathfrak{p}\right)}{\otimes}V.
\]
Depending on $\mathfrak{p}$, it will be called \emph{Generalized
Verma Type module,} if $\mathfrak{p}$ is of type {\scriptsize \bf (I)},
and \emph{generalized loop module,} if $\mathfrak{p}$ is of type
{\scriptsize \bf (II)}. As special cases, $M_{\mathfrak{p}}$ is
a classical Generalized Verma Module if $\mathfrak{p}$ is a standard
parabolic subalgebra and $V$ a finite-dimensional $\mathfrak{p}$-module.
If $\mathfrak{p}$ is of type {\scriptsize \bf (I)}, denomination
is further refined.

\subsection{Standard Generalized Verma Modules}

Standard Generalized Verma Modules (GVM) are induced from an irreducible
module for a Levi subalgebra $\mathcal{L}_{S}=\mathfrak{h}+\mathfrak{g}\left(S\right)$.
First the general case:

If $S\subset\Pi^{\circ}$, then $\mathcal{P}_{S}^{\pm}=\mathcal{L}_{S}\oplus\mathcal{N}_{S^{c}}^{\pm}$
and, for an irreducible $\mathcal{L}_{S}$-module $V_{S}$, we set
$\mathcal{N}_{S^{c}}^{\pm}V_{S}=0$ and
\begin{equation}
M_{S}^{\pm}\left(\lambda,\gamma\right)=\mathrm{ind}_{\mathcal{P}_{S}^{\pm}}^{\mathfrak{g}}\left(V_{S}\right)=\mathcal{U}\left(\mathfrak{g}\right)\underset{\mathcal{U}\left(\mathcal{P}_{S}^{\pm}\right)}{\otimes}V_{S}.\label{eq:M_S and L_S}
\end{equation}
It has a unique irreducible quotient $L_{S}^{\pm}\left(\lambda,\gamma\right)$.
Notice that $V_{S}$ does not have to be finite-dimensional. 
\begin{prop}
\label{prop:G-mod induced GVM}\cite[Prop. 3.6. (iii)]{Futorny1997}
Let $\tilde{V}$ be an irreducible weight $\mathfrak{g}$-module and
$0\ne v\in\tilde{V}_{\lambda}$ such that\textup{ $\mathcal{N}_{S^{c}}^{\pm}v=0$,
then }$\tilde{V}\cong L_{S}^{\pm}\left(\lambda,V_{S}\right)$ for
an irreducible $\mathcal{L}_{S}$-module $V_{S}$.
\end{prop}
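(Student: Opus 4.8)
The plan is to prove Proposition \ref{prop:G-mod induced GVM} by a standard Frobenius-reciprocity/universal-property argument, exhibiting $\tilde V$ as an irreducible quotient of an induced module and then invoking the uniqueness of that quotient established for $M_S^{\pm}$. First I would observe that the cyclic submodule $\mathcal{U}(\mathcal{P}_S^{\pm})v\subseteq\tilde V$ is a weight module for the Levi subalgebra $\mathcal{L}_S=\mathfrak{h}+\mathfrak{g}(S)$ on which $\mathcal{N}_{S^c}^{\pm}$ acts trivially (since $\mathcal{N}_{S^c}^{\pm}v=0$ and $[\mathcal{L}_S,\mathcal{N}_{S^c}^{\pm}]\subseteq\mathcal{N}_{S^c}^{\pm}$, so $\mathcal{N}_{S^c}^{\pm}$ preserves the $\mathfrak{h}$-grading and kills the top weight space, hence — because $\mathcal{P}_S^{\pm}=\mathcal{L}_S\oplus\mathcal{N}_{S^c}^{\pm}$ and $\mathcal{L}_S$ normalizes $\mathcal{N}_{S^c}^{\pm}$ — kills all of $\mathcal{U}(\mathcal{P}_S^{\pm})v$). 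Setting $V_S:=\mathcal{U}(\mathcal{L}_S)v$, this is a weight $\mathcal{L}_S$-module; I would then pass to an irreducible quotient $\overline{V_S}$ if necessary, but in fact irreducibility of $\tilde V$ will force $V_S$ itself to be irreducible, as I explain below.

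Next I would use the universal property of induction: since $v$ generates a $\mathcal{P}_S^{\pm}$-module isomorphic to (a quotient of) $V_S$ with $\mathcal{N}_{S^c}^{\pm}$ acting by zero, the inclusion $V_S\hookrightarrow\tilde V$ as $\mathcal{P}_S^{\pm}$-modules extends to a $\mathfrak{g}$-module homomorphism $M_S^{\pm}(\lambda,V_S)=\mathcal{U}(\mathfrak{g})\otimes_{\mathcal{U}(\mathcal{P}_S^{\pm})}V_S\to\tilde V$. By the PBW decomposition $\mathcal{U}(\mathfrak{g})=\mathcal{U}(\mathcal{N}_{S^c}^{\mp})\otimes\mathcal{U}(\mathcal{P}_S^{\pm})$ and the fact that $v$ generates $\tilde V$ over $\mathcal{U}(\mathfrak{g})$ — which holds because $\mathcal{U}(\mathcal{P}_S^{\pm})v=V_S$ is nonzero in the irreducible module $\tilde V$, so $\mathcal{U}(\mathfrak{g})v=\tilde V$ — this homomorphism is surjective. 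Hence $\tilde V$ is an irreducible quotient of $M_S^{\pm}(\lambda,V_S)$.

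To finish, I would invoke the statement recorded just before the proposition: $M_S^{\pm}(\lambda,\gamma)$ (equivalently $M_S^{\pm}(\lambda,V_S)$ for the irreducible $\mathcal{L}_S$-module $V_S$) has a \emph{unique} irreducible quotient $L_S^{\pm}(\lambda,\gamma)$. Therefore $\tilde V\cong L_S^{\pm}(\lambda,V_S)$, which is the assertion. The one point that needs care — and the place where I expect the genuine work to lie — is confirming that $V_S=\mathcal{U}(\mathcal{L}_S)v$ is an \emph{irreducible} $\mathcal{L}_S$-module (so that $M_S^{\pm}$ is built from an honest irreducible Levi-module as in \eqref{eq:M_S and L_S}), rather than merely having an irreducible quotient. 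The argument is: any proper nonzero $\mathcal{L}_S$-submodule $V'\subsetneq V_S$ would, after extending trivially over $\mathcal{N}_{S^c}^{\pm}$ to a $\mathcal{P}_S^{\pm}$-submodule and inducing, generate under $\mathcal{U}(\mathcal{N}_{S^c}^{\mp})$ a proper nonzero $\mathfrak{g}$-submodule of $\tilde V$ — proper because, by the PBW triangularity, intersecting back with the weight space carrying the top $\mathcal{L}_S$-weights recovers $V'\ne V_S$ — contradicting irreducibility of $\tilde V$. The only subtlety is ensuring the trivial $\mathcal{N}_{S^c}^{\pm}$-extension of $V'$ is well-defined and that the grading argument genuinely separates $V'$ from $V_S$ inside $\tilde V$; this is routine given the split triangular decomposition $\mathfrak{g}=\mathcal{N}_{S^c}^{\mp}\oplus\mathcal{L}_S\oplus\mathcal{N}_{S^c}^{\pm}$ and the weight-module hypothesis, and amounts to citing the corresponding part of \cite[Prop. 3.6]{Futorny1997}.
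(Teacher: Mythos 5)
Your argument is correct, but note that the paper itself offers no proof of this proposition: it is quoted verbatim from \cite[Prop.~3.6(iii)]{Futorny1997}, so there is nothing internal to compare against. What you give is the standard parabolic-induction argument, and all the essential points are in place: $\mathcal{N}_{S^{c}}^{\pm}$ is an ideal of $\mathcal{P}_{S}^{\pm}$, so $\mathcal{N}_{S^{c}}^{\pm}\,\mathcal{U}(\mathcal{P}_{S}^{\pm})v\subseteq\mathcal{U}(\mathcal{P}_{S}^{\pm})\,\mathcal{N}_{S^{c}}^{\pm}v=0$; Frobenius reciprocity then gives a surjection $M_{S}^{\pm}(\lambda,V_{S})\to\tilde V$; and uniqueness of the irreducible quotient finishes the proof once $V_{S}=\mathcal{U}(\mathcal{L}_{S})v$ is known to be irreducible. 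The one step you rightly flag as needing care — that $\mathcal{U}(\mathcal{N}_{S^{c}}^{\mp})V'$ meets the span of the weight spaces $\tilde V_{\mu}$, $\mu\in\lambda+\mathrm{span}_{\mathbb{Z}}S$, exactly in $V'$ — reduces to the lattice fact that no nonzero element of $\mathrm{span}_{\mathbb{N}_{0}}\bigl(\Delta_{+}\smallsetminus\mathrm{add}_{\Delta}S\bigr)$ lies in $\mathrm{span}_{\mathbb{Z}}S$, which holds because every such root has a strictly positive coefficient on some simple root outside $S$; you should state this explicitly rather than deferring it to the citation, since it is the only content-bearing verification in the whole proof.
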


\subsection{Irreducible representations of the Heisenberg subalgebra}
\begin{defn}
\label{def: Heisenberg-subalgebra}The \emph{Heisenberg subalgebra}
$\mathcal{H}\left(\check{\alpha}_{1},\dots,\check{\alpha}_{k}\right)$,
$\left(k\le n\right)$, is the image of \linebreak{}
$\sum_{i=1}^{k}\check{\alpha}_{i}\otimes\mathbb{C}\left[t,t^{-1}\right]\oplus\mathbb{C}c$
under the quotient map $\mathfrak{g}\rightarrow\mathfrak{g}/\left(\mathfrak{h}\otimes1\right)$.
\end{defn}

The Heisenberg subalgebras admit the maximal abelian subalgebras $\mathcal{H}_{\pm}\left(\check{\alpha}_{1},\dots,\check{\alpha}_{k}\right)\oplus\mathbb{C}c$
with 
\[
\mathcal{H}_{\pm}\left(\check{\alpha}_{1},\dots,\check{\alpha}_{k}\right)=\sum_{i=1}^{k}\check{\alpha}_{i}\otimes t^{\pm1}\mathbb{C}\left[t^{\pm1}\right].
\]
Consider the full Heisenberg subalgebra and its maximal abelian subalgebra
\[
\mathcal{H}=\mathcal{H}\left(\check{\alpha}_{1},\dots,\check{\alpha}_{n}\right)=\sum_{m\ne0}\mathfrak{g}_{m\delta}\oplus\mathbb{C}c\qquad\mathcal{H}_{+}=\mathcal{H}_{+}\left(\check{\alpha}_{1},\dots,\check{\alpha}_{n}\right)=\sum_{m>0}\mathfrak{g}_{m\delta}.
\]
Let $a\in\mathbb{C}^{*}$ and $\mathbb{C}v_{a}$ be the the 1-dimensional
$\mathcal{H}_{+}\oplus\mathbb{C}c$-module for which $\mathcal{H}_{+}v_{a}=0$,
$cv_{a}=av_{a}$. Consider the $\mathcal{H}$-module
\begin{equation}
M^{+}\left(\lambda\right)=\mathcal{U}\left(\mathcal{H}\right)\underset{\mathcal{U}\left(\mathcal{H}_{+}\oplus\mathbb{C}c\right)}{\otimes}\mathbb{C}v_{a}\,.\label{eq:loop_module non-zero}
\end{equation}
It carries a natural $\mathbb{Z}$-grading with the $i$-th component
$\sigma\left(\mathcal{U}\left(\mathcal{H}_{+}\right)_{-i}\right)v_{a}$.
Define $M^{+}\left(a\right)$ for $\mathcal{H}_{-}$ analogously.

Define another family of modules, so-called loop modules as in \cite{CharyPressley1986}.
Let\linebreak{}
 $p:\mathcal{U}\left(\mathcal{H}\right)\to\mathcal{U}\left(\mathcal{H}\right)/\mathcal{U}\left(\mathcal{H}\right)c$
be the canonical projection. For $r>0$, consider the $\mathbb{Z}$-graded
ring $L_{r}=\mathbb{C}\left[t^{-r},t^{r}\right]$. Denote by $P_{r}$
the set of graded ring epimorphisms $\Lambda:\mathcal{U}\left(\mathcal{H}\right)/\mathcal{U}\left(\mathcal{H}\right)c\to L_{r}$
with $\Lambda\left(1\right)=1$. Define a $\mathcal{H}$-module structure
on $L_{r}$ by the following action of any $e_{k\delta}^{\left(\alpha\right)}=\check{\alpha}\otimes t^{k}\in\mathfrak{g}_{k\delta}$,
$\alpha\in\Pi^{\circ}$:
\[
e_{k\delta}^{\left(\alpha\right)}t^{sr}=\left(\Lambda\circ p\right)\left(e_{k\delta}^{\left(\alpha\right)}\right)t^{sr}=t^{\left(k+s\right)r},\:\Lambda\in P_{r},\:k\in\mathbb{Z}\smallsetminus\left\{ 0\right\} ,\:ct^{rs}=0,\,s\in\mathbb{Z}.
\]
Denote this $\mathcal{H}$-module by $L_{r,\Lambda}$. Define $\Lambda_{0}$
the trivial homomorphism onto $\mathbb{C}$ with $\Lambda_{0}\left(1\right)=1$,
then $L_{0,\Lambda_{0}}$ is the trivial module. 
\begin{prop}
(i) \textup{\cite{Futorny1996}} Every irreducible $\mathbb{Z}$-graded
$\mathcal{H}$-module $V$ with central charge $\lambda\left(c\right)=a\in\mathbb{C}^{*}$
with at least one finite-dimensional weight component $V_{\varphi}$
is isomorphic to \textup{$M^{\pm}\left(\lambda\right)$ up to a shifting
of gradation.}\\
(ii) \textup{\cite{Chary1986}} Every irreducible $\mathbb{Z}$-graded
$\mathcal{H}$-module with central charge zero is isomorphic to $L_{r,\Lambda}$
for some $r\ge0$, $\Lambda\in P_{r}$ \textup{up to a shifting of
gradation.}
\end{prop}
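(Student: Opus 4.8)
For (i) and (ii) the hypotheses are disjoint (invertible versus zero central charge) and the two parts are proved by different mechanisms. For (i) I would reduce the whole classification to the production of a \emph{vacuum vector}. Suppose we have found a nonzero homogeneous $w\in V$ with $\mathcal{H}_{+}w=0$ (the case $\mathcal{H}_{-}w=0$ being symmetric), and shift the gradation so that $w\in V_{0}$. The universal property of induction then gives a nonzero, degree-preserving $\mathcal{H}$-module homomorphism $M^{+}(\lambda)\to V$, $v_{a}\mapsto w$, with $\lambda(c)=a$; it is surjective since $V$ is irreducible. Now $M^{+}(\lambda)$ is itself irreducible whenever $a\neq0$: it is bounded above in the gradation, so any nonzero graded submodule contains a homogeneous vector of maximal degree, which is therefore annihilated by $\mathcal{H}_{+}$, and by the standard Fock-space fact that the vacuum of $M^{+}(\lambda)$ is unique up to scalar when $a\neq0$ (equivalently, nondegeneracy of the contravariant form at nonzero level) this vector is a multiple of $v_{a}$, so the submodule is everything. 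Hence the surjection has zero kernel and $V\cong M^{\pm}(\lambda)$, up to the gradation shift.

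Everything thus comes down to producing $w$, and this is exactly where $\dim V_{\varphi}<\infty$ is used. I would first show that $\mathrm{supp}(V)$ is bounded above or bounded below: a nonzero homogeneous vector lying in an extremal graded component is then automatically killed by $\mathcal{H}_{-}$ (resp.\ $\mathcal{H}_{+}$) for degree reasons, and we are done. To prove boundedness, fix $0\neq v\in V_{\varphi}=V_{i_{0}}$, and for each frequency $m\ge 1$ choose dual bases $x^{(k)}_{m}\in\mathfrak{g}_{m\delta}$, $y^{(k)}_{m}\in\mathfrak{g}_{-m\delta}$ with $[x^{(k)}_{m},y^{(l)}_{m}]=c_{k,m}\delta_{kl}c$, $c_{k,m}\neq0$; these act on $V$ by the canonical commutation relations at the invertible level $a$. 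Since the degree-zero elements $x^{(k)}_{m}y^{(k)}_{m}$ and $y^{(k)}_{m}x^{(k)}_{m}$ restricted to $V_{i_{0}}$ all factor through the finite-dimensional space $V_{i_{0}}$, their traces on the neighbouring components are defined and, by cyclicity of trace, $\mathrm{tr}_{V_{i_{0}}}(x^{(k)}_{m}y^{(k)}_{m})$ is tied to $\mathrm{tr}_{V_{i_{0}\pm m}}(\cdots)$; on the other hand $\mathrm{tr}_{V_{i_{0}}}\big(x^{(k)}_{m}y^{(k)}_{m}-y^{(k)}_{m}x^{(k)}_{m}\big)=c_{k,m}\dim V_{i_{0}}\neq0$. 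Comparing these relations over all $m$ forces the ``annihilation'' operators from one side to have a common kernel on $V_{i_{0}}$, hence a vacuum vector there, and in any case forces $\mathrm{supp}(V)$ to terminate on one side. I expect this bookkeeping to be the main obstacle: the delicate point is that only the \emph{single} component $V_{\varphi}$ is assumed finite-dimensional, so one must propagate the constraint to a global bound on the support and carefully separate the genuinely one-sided case from degenerate configurations. This is essentially the argument of \cite{Futorny1996}.

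For (ii) the central element acts as $0$, so $[\mathcal{H},\mathcal{H}]\subseteq\mathbb{C}c$ acts trivially and $V$ is a $\mathbb{Z}$-graded module over the commutative algebra $R=\mathcal{U}(\mathcal{H})/\mathcal{U}(\mathcal{H})c$, which has $R_{0}=\mathbb{C}$ because $\mathcal{H}$ is concentrated in nonzero degrees. I would argue purely ring-theoretically. After a shift of gradation $V_{0}\neq0$; then $V_{0}=R_{0}V_{0}=\mathbb{C}v$ is one-dimensional, $V=Rv$, and $V\cong R/\mathfrak{a}$ for the graded ideal $\mathfrak{a}=\mathrm{Ann}_{R}(v)$. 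Irreducibility of $V$ says $R/\mathfrak{a}$ has no proper nonzero homogeneous ideal; since a homogeneous nonunit would generate one, every nonzero homogeneous element of $R/\mathfrak{a}$ is invertible, i.e.\ $R/\mathfrak{a}$ is a $\mathbb{Z}$-graded field with degree-zero part $\mathbb{C}$. Over the algebraically closed field $\mathbb{C}$ such a graded field is either $\mathbb{C}$ or $\mathbb{C}[t^{r},t^{-r}]=L_{r}$, where $r$ is the least positive degree occurring. The composite $\Lambda\colon R=\mathcal{U}(\mathcal{H})/\mathcal{U}(\mathcal{H})c\twoheadrightarrow R/\mathfrak{a}\cong L_{r}$ with $\Lambda(1)=1$ is precisely an element of $P_{r}$, and transporting the $R$-action through the isomorphism $V\cong L_{r}$ recovers exactly the action defining $L_{r,\Lambda}$ (with $r=0$, $\Lambda=\Lambda_{0}$ giving the trivial module). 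Here the only nonformal ingredient is the classification of $\mathbb{Z}$-graded fields, which is standard; this recovers the statement of \cite{Chary1986}.
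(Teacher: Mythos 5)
First, a point of comparison: the paper itself offers no proof of this proposition \textendash{} both parts are imported from \cite{Futorny1996} and \cite{Chary1986} \textendash{} so your argument has to stand entirely on its own. Part (ii) does: passing to the commutative graded ring $R=\mathcal{U}\left(\mathcal{H}\right)/\mathcal{U}\left(\mathcal{H}\right)c$ with $R_{0}=\mathbb{C}$, identifying the graded-irreducible cyclic module with $R/\mathrm{Ann}_{R}\left(v\right)$ for a homogeneous generator, noting that graded-irreducibility forces every nonzero homogeneous element of this quotient to be a unit, and quoting the classification of $\mathbb{Z}$-graded fields with degree-zero part $\mathbb{C}$ is a clean, self-contained route to $L_{r,\Lambda}$, and the resulting epimorphism is exactly an element of $P_{r}$ in the paper's sense. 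That part is correct and complete.

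Part (i) has a genuine gap, precisely at the step you yourself flag as ``the main obstacle''. The reduction to a vacuum vector and the irreducibility of $M^{\pm}\left(\lambda\right)$ at nonzero level are fine. But the trace bookkeeping does not produce the vacuum vector. What you can actually prove is $\mathrm{tr}_{V_{i_{0}}}\bigl(x_{m}^{\left(k\right)}y_{m}^{\left(k\right)}\bigr)-\mathrm{tr}_{V_{i_{0}}}\bigl(y_{m}^{\left(k\right)}x_{m}^{\left(k\right)}\bigr)=c_{k,m}\,a\,\dim V_{i_{0}}$ (your formula drops the factor $a$, which is exactly where $a\ne0$ enters), together with the finite-rank cyclicity identities $\mathrm{tr}_{V_{i_{0}}}\bigl(x_{m}^{\left(k\right)}y_{m}^{\left(k\right)}\bigr)=\mathrm{tr}_{V_{i_{0}-m}}\bigl(y_{m}^{\left(k\right)}x_{m}^{\left(k\right)}\bigr)$ and $\mathrm{tr}_{V_{i_{0}}}\bigl(y_{m}^{\left(k\right)}x_{m}^{\left(k\right)}\bigr)=\mathrm{tr}_{V_{i_{0}+m}}\bigl(x_{m}^{\left(k\right)}y_{m}^{\left(k\right)}\bigr)$. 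These only transport a single scalar one step along the chain $V_{i_{0}+\mathbb{Z}m}$: the companion product on $V_{i_{0}\pm m}$ factors through $V_{i_{0}\pm2m}$ and is not known to be trace-class unless $\dim V_{i_{0}\pm m}<\infty$, which is part of what must be proved, so the ladder of identities cannot be closed. No contradiction arises and no common kernel of the raising operators on $V_{i_{0}}$ is produced; nothing you write excludes an irreducible module with two-sided support whose traces simply grow linearly along the chain. The substantive content of the cited result lies exactly here: one needs, for instance, a spectral analysis of the degree-zero operators $y_{m}^{\left(k\right)}x_{m}^{\left(k\right)}$ on the finite-dimensional space $V_{i_{0}}$ (their eigenvalues are shifted by the fixed nonzero constants $\pm c_{k,m}a$ under the raising and lowering operators, forcing local nilpotency of one member of each pair), followed by an argument that the resulting partial vacua for the various $m$ and $k$ can be chosen simultaneously and on a consistent side. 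None of this is recovered by the sketch, so part (i) should be regarded as unproved rather than reproved.
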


\subsection{Generalized loop modules}

Following \cite{Chary1986}, we define the category $\tilde{\mathcal{O}}\left(\Pi^{\circ}\right)$
to be the category of weight $\mathfrak{g}$-modules $V$ satisfying
the condition that there exist finitely many elements $\lambda_{1},\dots,\lambda_{r}\in\mathfrak{h}^{*}$
such that 
\[
\mathrm{supp}\left(V\right)\subset\bigcup_{i=1}^{r}R\left(\lambda_{i}\right)\mbox{ where }R\left(\lambda_{i}\right)=\lambda_{i}-\mathrm{span}_{\mathbb{N}_{0}}\Pi^{\circ}+\mathbb{Z}\delta.
\]

In $\Delta^{\circ}$, the set $\Phi=-\Delta_{+}^{\circ}\cup\mathrm{add}_{\Delta^{\circ}}\left(S\right)$
is a parabolic system for every $S\subset\Pi^{\circ}$. Thus, we
can define $\mathfrak{n}_{S^{c}}=\mathfrak{g}_{P\smallsetminus\sigma P}$
if $S$ is a basis for $P\smallsetminus\sigma P$ and $P$ is given
by 
\[
P=\Delta^{\circ}\smallsetminus\mathrm{add}_{\Delta^{\circ}}\left(S\right)+\mathbb{Z}\delta\,.
\]
Thus
\begin{equation}
\mathfrak{n}_{S^{c}}=\sum_{\varphi\in P\smallsetminus\sigma P}\mathfrak{g}_{\varphi}\,.\label{eq:natural_nilpotent_subalgebra}
\end{equation}

Back to the case where $S=\Pi^{\circ}$: Denote $\mathfrak{n}_{+}=\mathfrak{n}_{\Pi^{\circ}}$
and $\mathfrak{n}_{-}=\mathfrak{n}_{-\Pi^{\circ}}$. Then $\mathfrak{g}=\mathfrak{n}_{-}\oplus\left(\mathfrak{h}+\mathcal{H}\right)\oplus\mathfrak{n}_{+}$
is a triangular decomposition with Borel subalgebra $\mathfrak{b}=\left(\mathfrak{h}+\mathcal{H}\right)\oplus\mathfrak{n}_{+}$.
This $\mathfrak{b}$ is called the \index{natural Borel subalgebra}\emph{natural
Borel subalgebra}. Let $V$ be an irreducible $\mathbb{Z}$-graded
$\mathcal{H}$-module with central charge $a\in\mathbb{C}$ and $\lambda\in\mathfrak{h}^{*}$
with $\lambda\left(c\right)=a$. Define a $\mathfrak{b}$-module structure
on $V$ by the action $hv_{k}=\left(\lambda+k\delta\right)\left(h\right)v_{k}$,
$\mathfrak{n}_{+}v_{k}=0$ for all $h\in\mathfrak{h}^{\circ}$, $v_{k}\in V_{k}$,
$i\in\mathbb{Z}$. From this $\mathcal{H}$-module $V\left(\lambda\right)$
we can obtain a $\mathfrak{g}$-module by induction, 
\[
M_{\mathfrak{b}}\left(V\left(\lambda\right)\right)=\mathrm{ind}_{\mathfrak{b}}^{\mathfrak{g}}\left(V\left(\lambda\right)\right)=\mathcal{U}\left(\mathfrak{g}\right)\underset{\mathcal{U}\left(\mathfrak{b}\right)}{\otimes}V\left(\lambda\right).
\]
This module is called \emph{\index{imaginary Verma module}imaginary
Verma module}. By definition, $M_{\mathfrak{b}}\left(V\left(\lambda\right)\right)$
is $Q$-graded. Denote by $\left[\lambda\right]$ the image of $\lambda$
under the quotient map $\mathfrak{h}^{*}\rightarrow\mathfrak{h}^{*}/\mathbb{Z}\delta$.
This image admits the classical Bruhat order and the relation $\mathfrak{n}_{+}v_{k}=0$
entails 
\[
M_{\mathfrak{b}}\left(V\left(\lambda\right)\right)=\bigoplus_{\left\{ \mu\mid\left[\mu\right]\le\left[\lambda\right]\right\} }V_{\mu}\:,
\]
thus $\mathrm{supp}\left(V\right)\subset\lambda-\mathrm{span}_{\mathbb{N}_{0}}\Pi^{\circ}+\mathbb{Z}\delta.$

The following facts hold for the above and an irreducible object $\tilde{V}$
in $\tilde{\mathcal{O}}\left(\Pi^{\circ}\right)$: 
\begin{prop}
\label{pro: loop modules}(i) \textup{$M_{\mathfrak{b}}\left(V\left(\lambda\right)\right)$}
is $\mathcal{U}\left(\mathfrak{n}_{-}\right)$-free.\\
(ii) $M_{\mathfrak{b}}\left(V\left(\lambda\right)\right)$ has a unique
irreducible quotient $L_{\mathfrak{b}}\left(V\left(\lambda\right)\right)$.\\
(iii) \label{prop:G-mod induced-1} \cite{Futorny1996} There exist
$\lambda\in\mathfrak{h}^{*}$ and an irreducible $\mathcal{H}$-module
$V$ such that $\tilde{V}$ is isomorphic to the unique irreducible
quotient of $\mathrm{ind}_{\mathfrak{b}}^{\mathfrak{g}}\left(V\right)$.\\
(iv) \label{thm:classificationChari-modules-1}\cite{CharyPressley1986}
If $\tilde{V}$ has central charge zero, then $\tilde{V}\cong L_{\mathfrak{b}}\left(L_{r,\Lambda}\right)$
for some $\lambda\in\mathfrak{h}^{*}$, $\lambda\left(c\right)=0$,
$\Lambda\in P_{r}$.\\
(v) \cite{CharyPressley1986} If $\tilde{V}$ has central charge $a\in\mathbb{C}^{*}$
and $\mathrm{dim}\,\tilde{V}_{\mu}<\infty$ for at least one $\mu\in\mathrm{supp}\left(\tilde{V}\right)$,
then $\tilde{V}\cong L_{\mathfrak{b}}\left(V\left(\lambda\right)\right)$
for some $\lambda\in\mathfrak{h}^{*}$, $\lambda\left(c\right)=a$.\\
(vi) \cite{CharyPressley1986} If $\tilde{V}$ is integrable then
$\tilde{V}$ has central charge zero.
\end{prop}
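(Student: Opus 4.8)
The plan is to prove (i) and (ii) directly from the triangular decomposition $\mathfrak{g}=\mathfrak{n}_{-}\oplus(\mathfrak{h}+\mathcal{H})\oplus\mathfrak{n}_{+}$, $\mathfrak{b}=(\mathfrak{h}+\mathcal{H})\oplus\mathfrak{n}_{+}$, and to reduce (iii)--(vi) to the classification of irreducible $\mathbb{Z}$-graded $\mathcal{H}$-modules stated above together with the cited work of Futorny and Chari--Pressley. For (i), the PBW theorem for this decomposition gives a linear isomorphism $\mathcal{U}(\mathfrak{n}_{-})\otimes_{\mathbb{C}}\mathcal{U}(\mathfrak{b})\to\mathcal{U}(\mathfrak{g})$ which is simultaneously left $\mathcal{U}(\mathfrak{n}_{-})$-linear and right $\mathcal{U}(\mathfrak{b})$-linear, and tensoring it over $\mathcal{U}(\mathfrak{b})$ with $V(\lambda)$ identifies $M_{\mathfrak{b}}(V(\lambda))$ with $\mathcal{U}(\mathfrak{n}_{-})\otimes_{\mathbb{C}}V(\lambda)$ as a left $\mathcal{U}(\mathfrak{n}_{-})$-module, which is free. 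For (ii), grade $M_{\mathfrak{b}}(V(\lambda))$ by $\mathfrak{h}^{\circ}$-weight; every submodule is a weight submodule, hence graded, and since $\mathfrak{n}_{-}$ strictly lowers the $\mathfrak{h}^{\circ}$-degree, the stratum of maximal $\mathfrak{h}^{\circ}$-degree is exactly $1\otimes V(\lambda)$, which is $\mathfrak{b}$-irreducible because $V$ is $\mathcal{H}$-irreducible and $\mathfrak{n}_{+}$ annihilates it. Hence any proper submodule meets $1\otimes V(\lambda)$ in $0$, so (by the grading) the sum of all proper submodules does too, is therefore proper, and is the unique maximal proper submodule, with quotient $L_{\mathfrak{b}}(V(\lambda))$.

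For (iii), which is the content of \cite{Futorny1996}: let $\tilde{V}$ be irreducible in $\tilde{\mathcal{O}}(\Pi^{\circ})$, so $\mathrm{supp}\,\tilde{V}\subset\bigcup_{i=1}^{r}R(\lambda_{i})$. Restricting weights to $\mathfrak{h}^{\circ}$ sends $\mathrm{supp}\,\tilde{V}$ into a finite union of cones $\lambda_{i}^{\circ}-\mathrm{span}_{\mathbb{N}_{0}}\Pi^{\circ}$, which contains no infinite strictly ascending chain for the dominance order; hence $\mathrm{supp}\,\tilde{V}$ contains a weight whose $\mathfrak{h}^{\circ}$-part $\lambda^{\circ}$ is maximal, and for a corresponding $\lambda\in\mathrm{supp}\,\tilde{V}$ the $\delta$-strip $W=\bigoplus_{k\in\mathbb{Z}}\tilde{V}_{\lambda+k\delta}$ is nonzero, is preserved by $\mathcal{H}$ (the only root spaces stabilizing a $\delta$-strip lie in $\mathcal{H}+\mathfrak{h}$), and is annihilated by $\mathfrak{n}_{+}$ (which would move it to strictly larger $\mathfrak{h}^{\circ}$-degree). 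If $0\ne W'\subsetneq W$ were a $\mathbb{Z}$-graded $\mathcal{H}$-submodule, then $\mathcal{U}(\mathfrak{g})W'=\mathcal{U}(\mathfrak{n}_{-})W'$ (as $\mathfrak{n}_{+}W'=0$ and $\mathfrak{h}+\mathcal{H}$ preserves $W'$), whose part in $\mathfrak{h}^{\circ}$-degree $\lambda^{\circ}$ is $W'$ again, contradicting the irreducibility of $\tilde{V}$; so $W$ is $\mathcal{H}$-irreducible. Extending the $\mathcal{H}$-action to $\mathfrak{b}$ by $\mathfrak{n}_{+}W=0$, irreducibility of $\tilde{V}$ makes the canonical map $\mathrm{ind}_{\mathfrak{b}}^{\mathfrak{g}}(W)\to\tilde{V}$ surjective, whence $\tilde{V}\cong L_{\mathfrak{b}}(W)$ by (ii).

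Parts (iv)--(vi) then follow: by (iii) $\tilde{V}\cong L_{\mathfrak{b}}(V)$ for an irreducible $\mathbb{Z}$-graded $\mathcal{H}$-module $V$ whose central charge equals that of $\tilde{V}$. If that central charge is $0$, the classification of $\mathbb{Z}$-graded $\mathcal{H}$-modules stated above (\cite{Chary1986}) gives $V\cong L_{r,\Lambda}$ up to a grading shift, which is (iv). If it is $a\in\mathbb{C}^{*}$ and some $\tilde{V}_{\mu}$ is finite-dimensional, then, as in \cite{Futorny1996}, a weight component of $V$ is finite-dimensional, so $V\cong M^{\pm}(\lambda)$ up to a grading shift, which is (v). For (vi): if $\tilde{V}$ is integrable, restrict it to the affine subalgebra of type $A_{1}^{(1)}$ attached to a fixed $\alpha\in\Delta^{\circ}$; integrability imposes, on each weight $\mu$ and each real root $\alpha+n\delta$, integrality and $\mathfrak{sl}_{2}$-string conditions on $\mu(\check{\alpha})+n\kappa^{\circ}(x_{\alpha},x_{-\alpha})\,\mu(c)$ which, together with $\mathrm{supp}\,\tilde{V}$ being confined to finitely many translates of $-\mathrm{span}_{\mathbb{N}_{0}}\Pi^{\circ}+\mathbb{Z}\delta$, can hold only when the central charge $\mu(c)$ vanishes; the detailed verification is \cite{CharyPressley1986}.

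\textbf{The main obstacle} lies in (iii): showing that the cone condition defining $\tilde{\mathcal{O}}(\Pi^{\circ})$ survives restriction to $\mathfrak{h}^{\circ}$, so that a maximal $\delta$-strip exists, and that this strip is $\mathcal{H}$-irreducible. This is exactly the input imported from \cite{Futorny1996}. The $\mathfrak{sl}_{2}$-string analysis behind (vi) and the descent of finite-dimensionality to $V$ needed in (v) are the remaining technical points, both carried out in the cited references.
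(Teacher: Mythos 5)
Your proof is correct. The paper itself gives no proof of this proposition --- parts (iii)--(vi) are simply quoted from \cite{Futorny1996} and \cite{CharyPressley1986}, and (i)--(ii) are left as standard --- so your PBW/free-module argument for (i), the graded highest-stratum argument for (ii), and the extraction of a maximal $\delta$-strip $W=\bigoplus_{k}\tilde{V}_{\lambda+k\delta}$ for (iii) supply exactly the content the paper implicitly relies on, and your reductions of (iv)--(vi) to the classification of irreducible $\mathbb{Z}$-graded $\mathcal{H}$-modules match the paper's citations. The one step you (rightly) defer entirely to the references is the descent, in (v), of finite-dimensionality from an arbitrary weight space $\tilde{V}_{\mu}$ to a graded component of the top strip $W$; that is genuinely the technical core of \cite{Futorny1996} and is not reproved in the paper either.
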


\subsection{Non-standard or mixed modules}

The modules presented in this section are called \emph{mixed type
modules,} because they are parabolically induced from irreducible
modules for a Levi subalgebra that is a sum of a subalgebra of the
Heisenberg subalgebra and (possibly several) affine Lie subalgebras.
Recall $\mathfrak{n}_{X^{c}}=\mathfrak{g}_{\Delta_{+}^{\circ}\smallsetminus\mathrm{add}_{\Delta^{\circ}}\left(X\right)+\mathbb{Z}\delta}$
for $X\subset\Pi^{\circ}$.

Let $V$ be a $\mathbb{Z}$-graded $\mathcal{H}\left(\check{S^{c}}\right)$-module
with central charge $a\in\mathbb{C}$ and $\lambda\in\mathfrak{h}^{*}$
with $\lambda\left(c\right)=a$. Define a $\mathfrak{b}_{S}$-module
structure on $V$ by the action $hv_{i}=\left(\lambda+i\delta\right)\left(h\right)v_{i}$,
$\mathfrak{n}_{S^{c}}v_{i}=0$ for all $h\in\mathfrak{h}$, $v_{i}\in V_{i}$,
$i\in\mathbb{Z}$. 

Now we are able to state the central theorem for parabolic induction,
which points out the fact that the parabolic induction functor ``produces''
irreducible representations for every parabolic subalgebra $\mathfrak{p}$
of $\mathfrak{g}$ and for every irreducible module of the Levi component
of $\mathfrak{p}$. Therefore we have to see, that inducing from irreducible
module of a subalgebra of type {\scriptsize \bf (II)} (non-standard),
\[
\mathfrak{p}=\bigoplus_{i}\widehat{\mathfrak{g}}\left(S_{i}\right)\oplus\mathcal{H}\left(\check{S^{c}}\right)\oplus\mathfrak{n}_{S^{c}}+\mathfrak{h}
\]
with disconnected components $S_{i}$ with $\bigcup_{i}S_{i}=S$,
is well behaved.
\begin{thm}
\label{prop:O(S)-induction}Let $\mathfrak{p}=\bigoplus_{i}\widehat{\mathfrak{g}}\left(S_{i}\right)\oplus\mathcal{H}\left(\check{S^{c}}\right)\oplus\mathfrak{n}_{S^{c}}+\mathfrak{h}$.\textbf{
}If $V$ is an irreducible weight $\mathfrak{p}$-module such that
$\mathfrak{n}_{S^{c}}$ acts zero, then the induction 
\[
M_{\mathfrak{p}}=\mathrm{ind}_{\mathfrak{p}}^{\mathfrak{g}}\left(V\right)=\mathcal{U}\left(\mathfrak{g}\right)\underset{\mathcal{U}\left(\mathfrak{p}\right)}{\otimes}V
\]
 admits a unique irreducible quotient, $L_{\mathfrak{p}}$.
\end{thm}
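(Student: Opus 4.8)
The plan is to run the standard ``parabolically induced module from an irreducible Levi module has a unique maximal proper submodule'' argument, adapted to the fact that here the Levi factor $\mathfrak{l}:=\bigoplus_{i}\widehat{\mathfrak{g}}\left(S_{i}\right)\oplus\mathcal{H}\left(\check{S^{c}}\right)+\mathfrak{h}$ is infinite-dimensional and the weight spaces of $V$ need not be finite-dimensional (so no appeal to finite dimensionality is available). By Theorem \ref{thm:parabolic induction}, $\mathfrak{p}=\mathfrak{p}\left(P\right)$ for a parabolic system $P$ with nilradical $\mathcal{N}=\mathfrak{n}_{S^{c}}=\mathfrak{g}_{P\smallsetminus\sigma\left(P\right)}$ and Levi factor $\mathfrak{l}$, and $\mathfrak{g}$ has the split triangular decomposition $\mathfrak{g}=\mathfrak{n}_{S^{c}}^{-}\oplus\mathfrak{l}\oplus\mathfrak{n}_{S^{c}}$, where $\mathfrak{n}_{S^{c}}^{-}:=\mathfrak{g}_{\sigma\left(P\right)\smallsetminus P}$; since $\mathfrak{n}_{S^{c}}$ is an ideal of $\mathfrak{p}$, extending the $\mathfrak{l}$-action on $V$ by $\mathfrak{n}_{S^{c}}V=0$ is consistent. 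First I would record, via the PBW theorem, that $M_{\mathfrak{p}}\cong\mathcal{U}\left(\mathfrak{n}_{S^{c}}^{-}\right)\otimes_{\mathbb{C}}V$ as $\mathfrak{n}_{S^{c}}^{-}$- and $\mathfrak{h}$-modules; as $\mathfrak{h}$ normalizes $\mathfrak{n}_{S^{c}}^{-}$ and acts diagonalizably on $V$, $M_{\mathfrak{p}}$ is a weight module, it is $\mathcal{U}\left(\mathfrak{n}_{S^{c}}^{-}\right)$-free, and it is generated by $1\otimes V$, which therefore meets no proper submodule in all of itself.

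The engine of the proof is a grading element. Since $S\subseteq\Pi^{\circ}$ and $\Pi^{\circ}$ is a basis of $\left(\mathfrak{h}^{\circ}\right)^{*}$, choose $h\in\mathfrak{h}^{\circ}$ with $\alpha\left(h\right)=0$ for $\alpha\in S$ and $\alpha\left(h\right)\in\mathbb{Z}_{>0}$ for $\alpha\in\Pi^{\circ}\smallsetminus S$; note $\delta\left(h\right)=0$ because $h\in\mathfrak{h}^{\circ}$. Then I would check that $\mathrm{ad}\left(h\right)$ acts on $\mathfrak{l}$ with eigenvalue $0$ (its roots lie in $\mathbb{Z}S+\mathbb{Z}\delta$, on which $h$ vanishes), on $\mathfrak{n}_{S^{c}}$ with strictly positive integer eigenvalues (its roots are $\beta+m\delta$ with $\beta\in\Delta_{+}^{\circ}\smallsetminus\mathrm{add}_{\Delta^{\circ}}\left(S\right)$, so $\beta\left(h\right)>0$), and on $\mathfrak{n}_{S^{c}}^{-}$ with strictly negative ones. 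Consequently $h$ acts on $M_{\mathfrak{p}}$ semisimply; since $V$ is irreducible over $\mathfrak{l}$, all its weights take the same value $\lambda\left(h\right)$ on $h$, and every weight $\mu$ of $M_{\mathfrak{p}}$ satisfies $\mu\left(h\right)\le\lambda\left(h\right)$ with equality precisely on $1\otimes V$. Thus the $h$-eigenspace decomposition of $M_{\mathfrak{p}}$ is a $\mathbb{Z}_{\le0}$-grading (after subtracting $\lambda\left(h\right)$) whose top component is $1\otimes V$, and every submodule, being a weight submodule, is graded for it.

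Now I would conclude as follows. Let $N\subsetneq M_{\mathfrak{p}}$ be a proper submodule. Its top graded component $N\cap\left(1\otimes V\right)$ is stable under $\mathfrak{l}$ (which preserves $h$-values) and is annihilated by $\mathfrak{n}_{S^{c}}$ (which cannot raise past the top), hence is a $\mathfrak{p}$-submodule of $1\otimes V\cong V$; by irreducibility of $V$ it is $0$ or all of $1\otimes V$, and the latter is impossible since $1\otimes V$ generates $M_{\mathfrak{p}}$ over $\mathcal{U}\left(\mathfrak{g}\right)$. So every proper submodule has trivial top component, whence the sum $M'$ of all proper submodules is again graded with trivial top component, hence proper. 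Therefore $M'$ is the unique maximal proper submodule and $L_{\mathfrak{p}}:=M_{\mathfrak{p}}/M'$ is the unique irreducible quotient. The only delicate point is the grading step: one must ensure that the single element $h$ simultaneously flattens every piece of the intricate Levi $\mathfrak{l}$ (the several affine components $\widehat{\mathfrak{g}}\left(S_{i}\right)$, the Heisenberg part $\mathcal{H}\left(\check{S^{c}}\right)$, and all of $\mathfrak{h}$) to eigenvalue $0$ while strictly separating $\mathfrak{n}_{S^{c}}$ from $\mathfrak{n}_{S^{c}}^{-}$; once that is in place, the remainder is routine highest-weight-type bookkeeping.
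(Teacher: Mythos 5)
Your argument is correct and is precisely the ``standard'' proof that the paper invokes without writing out (the paper's entire proof is the sentence ``The proof is standard.''): a grading element $h\in\mathfrak{h}^{\circ}$ vanishing on $S$ and $\delta$ but positive on $\Pi^{\circ}\smallsetminus S$ flattens every piece of the Levi (the $\widehat{\mathfrak{g}}\left(S_{i}\right)$, the Heisenberg part and $\mathfrak{h}$) while strictly separating $\mathfrak{n}_{S^{c}}$ from $\mathfrak{n}_{S^{c}}^{-}$, so the top component of any proper submodule is a $\mathfrak{p}$-submodule of $1\otimes V$ and must vanish. The delicate points you flag (existence of $h$, gradedness of submodules via the weight decomposition, and that $\mathrm{supp}\left(V\right)$ lies in a single coset of the root lattice of the Levi because $\mathfrak{n}_{S^{c}}$ acts by zero) are all handled correctly, so no gap remains.
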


The proof is standard.

\subsection{Classification problem for irreducible non-dense weight $\mathfrak{g}$-modules}

We have shown already, that when inducing from the different parabolics,
this yields different families of induced modules that have the same
nice property of admitting unique irreducible quotients. The natural
question then would be to ask in how far induction exhausts all irreducible
modules. The main conjecture states that every non-dense module is
induced. Our main theorem is
\begin{thm}
\label{thm:Main}Let $\mathfrak{g}$ be $A_{1}^{\left(1\right)}$,
$A_{2}^{\left(2\right)}$, $A_{2}^{\left(1\right)}$, $A_{3}^{\left(1\right)}$
or $A_{4}^{\left(1\right)}$ and $V$ be an irreducible non-dense
$\mathfrak{g}$-module, then there exists a vector $v\in V$ that
is primitive with respect to the nilpotent part $\mathcal{N}$ of
a parabolic subalgebra $\mathfrak{p}\left(P\right)=\mathcal{L}\oplus\mathcal{N}$.
\end{thm}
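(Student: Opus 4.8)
The strategy is to reduce the problem to finitely many cases via a ``height'' or ``distance to a hole'' argument, and then handle the residual cases by the computational/combinatorial methods advertised in the abstract. First I would recall that by Conjecture 1$'$ (the equivalent reformulation) it suffices to produce a primitive vector in any non-dense $V$. Fix a hole $\mu\in(\lambda+Q)\smallsetminus\mathrm{supp}(V)$; the existence of a hole is exactly the hypothesis of non-density. The key observation is that holes interact with the root operators: if $x_{\alpha}^{\pm}$ acted injectively on all of $V$ for every real root $\alpha$, then in the relevant finite-dimensional reductive Levi directions $V$ would be cuspidal over those Levis and, via Theorem~\ref{Prop: 3.6. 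Classifiaction of irrep L_S} and the $A,C$ restriction of \cite{Fernando1990}, the support would have to fill out the whole coset in those directions, contradicting the presence of $\mu$. So there must exist some real root $\alpha$ and some weight $\nu\in\mathrm{supp}(V)$ with $x_{\alpha}v=0$ for a nonzero $v\in V_{\nu}$, i.e.\ a vector killed by a single root space. The task is to upgrade this to a vector killed by the \emph{entire} nilradical $\mathcal{N}$ of some parabolic system $P$.

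The upgrading step is where the structure theory of Sections 2.2--2.3 enters. Using Theorem~\ref{thm:parabolic induction} and the classification of parabolic systems (the Theorem following \eqref{eq:parabolic_sets_new}), one knows exactly which parabolic subalgebras $\mathfrak{p}(P)=\mathcal{L}\oplus\mathcal{N}$ can occur, organized by the subsets $S\subset X\subset\Pi^{\circ}$ and by whether $-\delta\in P$. The plan is: given a vector $v$ annihilated by one real root space $\mathfrak{g}_{\alpha}$, let $\mathcal{N}'$ be the largest nilpotent subalgebra all of whose root spaces annihilate some translate of $v$ under $\mathcal{U}(\mathcal{L})$-action; show $\mathcal{N}'$ is the nilradical of a genuine parabolic system by checking additive closure, which follows because the annihilated roots form an additively closed set once one argues that if $\mathfrak{g}_{\alpha}$ and $\mathfrak{g}_{\beta}$ kill $v$ and $\alpha+\beta\in\Delta$ then $\mathfrak{g}_{\alpha+\beta}$ kills $v$ (a bracket computation using that $V$ is a weight module with the relevant weight space potentially small). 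When $\mathcal{N}'$ is too small — not maximal parabolic — one iterates: the quotient action on $\mathrm{Ind}_{\mathfrak{p}(P')}^{\mathfrak{g}}$ or a localization argument moves the hole and produces a new primitive direction.

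The main obstacle — and the reason the theorem is stated only for $A_{n}^{(1)}$, $n\le 4$, rather than all affine types — is precisely the last move: when the naive annihilator subalgebra fails to be parabolic, one must genuinely \emph{find} a primitive vector, and the abstract argument does not by itself produce one. As the abstract indicates, for all $A_{n}^{(1)}$ the problem reduces (via bounding the height of a potential hole — presumably by a translation-group $t_{\alpha}$ argument on $\mathrm{supp}(V)$ using \eqref{eq:translation}, showing that a hole far from the ``boundary'' forces a hole near it, hence only finitely many configurations below an explicit bound survive) to checking finitely many cases; for $A_{2}^{(1)}$ and $A_{3}^{(1)}$ an algorithmic search over these cases closes the argument, while $A_{4}^{(1)}$ needs extra combinatorics (likely a tropical-geometry / integer-programming encoding of the support constraints, consistent with the MSC codes 14T05 and 90C90). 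So the proof proper will be: (1) reduce to the bounded region via the translation/height estimate; (2) in the bounded region, invoke the algorithm of the later sections for $n\le 3$; (3) supply the additional combinatorial input for $n=4$; and in all cases conclude via Proposition~\ref{prop:G-mod induced GVM} and Theorem~\ref{prop:O(S)-induction} that the primitive vector realizes $V$ as a quotient of an induced module.
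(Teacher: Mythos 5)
Your plan reproduces the high-level architecture already announced in the abstract (reduce to finitely many cases below a bound, then run an algorithm, then do extra work for $A_{4}^{(1)}$), but the two steps that carry the actual mathematical weight are missing or replaced by arguments that do not work as stated. First, the ``upgrading step'': you propose to pass from a vector killed by one root space to a vector killed by a full nilradical by observing that the set of annihilated roots is additively closed and then ``iterating''. Additive closure of $\mathrm{Ann}(v)$ is indeed immediate (the paper notes this in passing), but an additively closed set of roots is in general \emph{not} of the form $P\smallsetminus\sigma(P)$ for a parabolic system $P$; this failure is precisely why the paper introduces quasicone subalgebras, their presentation matrices, the defect function $\#C$, and the notion of a strategy. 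The paper's actual mechanism is: (a) an induction on $n$ (the proposition in Section 5.3, with its binary case tree and Lemmas \ref{h-Argument}, \ref{lem:Delta-induction}, \ref{lem:last-step-main-proof}) showing that some vector is annihilated by a quasicone $C$ or by an $\mathfrak{n}_{S}$-type subalgebra; and (b) explicit compositions of root operators that transform $C$ inside the tropical matrix algebra until a GVM-complete quasicone ($k_{\alpha}+k_{-\alpha}\in\{1,2\}$) is reached, at which point Definition \ref{def:semi-primitive} and Proposition \ref{prop:G-mod induced GVM} apply. Your ``localization argument moves the hole'' does not substitute for this; localization is explicitly unavailable here (the introduction says so), and nothing in your sketch controls how the annihilator changes when you act by a root operator.

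Second, the finiteness reduction is not a translation-group estimate on $\mathrm{supp}(V)$. It is Lemma \ref{lem:proof for c_1 above lower bound}: a direct min/max computation showing that the shortest strategy $s_{k}=e_{-\alpha_{1}+k\delta}\circ e_{\alpha_{1}}$ strictly decreases the defect whenever the entry $c_{1,0}$ of the normal-form quasicone exceeds $(n+1)\cdot\varepsilon$, whence Corollary \ref{cor:Finite}. Without such a quantitative statement about how a specific operator composition changes the annihilator, you have no bound and hence no finite list to feed to the algorithm. Finally, your opening argument (injectivity of all real root operators would force density via the Fernando--Mathieu classification over Levi subalgebras) only yields one annihilating root vector and, as it stands, silently assumes finite-dimensional weight spaces, which the theorem does not; the paper's induction avoids this. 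So the proposal identifies the right skeleton but omits the quasicone calculus that constitutes the proof.
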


If the affine root system has rank 2 and the module has only finite-dimensional
weight subspaces, we are able to give a precise classification statement,
because the non-trivial Levi subalgebras can only take a shape of
a simple Lie algebra or a Heisenberg algebra (both of rank one). Whereas
in general the Levi subalgebra itself could be a sum of affine Lie
algebras, whose cuspidal modules still are not classified. Also nothing
is known about the dimension of their weight spaces \textendash{}
although the latter we believe to be only infinite-dimensional. 
\begin{thm}
\cite{Futorny1996,Bunke2009} If $\mathfrak{g}$ has rank 2, i.e.
$\mathfrak{g}=A_{1}^{\left(1\right)}$ or $\mathfrak{g}=A_{2}^{\left(2\right)}$,
and $V$ is an irreducible non-dense $\mathfrak{g}$-module with at
least one finite-dimensional weight subspace, then $V$ is equivalent
to one module out of the following pairwise non-equivalent classes:
\[
L_{\alpha}^{\pm}\left(\lambda,\gamma\right),\:\text{if }\lambda\left(c\right)=0,\;L_{\mathfrak{b}}\left(L_{r,\Lambda}\right),\:\text{if }\lambda\left(c\right)=a,\;L_{\mathfrak{b}}\left(M^{\pm}\left(\lambda\right)\right)
\]
 for $\alpha\in\Delta^{\mathrm{re}},\,\lambda\in\mathfrak{h}^{*},\,\gamma,a\in\mathbb{C},\,r\in\mathbb{N}_{0}$,
$\Lambda\in P_{r}$, where $\mathfrak{b}=\mathfrak{h}\oplus\bigoplus_{\varphi\in\left(\mathbb{Z}\alpha+\mathbb{Z}\delta\right)\cap\Delta}\mathfrak{g}_{\alpha}$
and $M^{\pm}\left(\lambda\right)$ the irreducible $\mathbb{Z}$-graded
$\mathcal{H}\left(\check{\alpha}\right)$-module defined in \ref{eq:loop_module non-zero}.

If moreover $V$ has only finite-dimensional weight subspace and the
central charge $\lambda\left(c\right)=a\ne0$, then $V\cong L_{\alpha}^{\pm}\left(\lambda,\gamma\right)$
for some $\alpha\in\Delta^{\mathrm{re}}$, $\lambda\in\mathfrak{h}^{*}$,
$\gamma\in\mathbb{C}$. 
\end{thm}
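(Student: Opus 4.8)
The plan is to obtain the classification as a recombination of the existence of a primitive vector (Theorem \ref{thm:Main}) with the structure theory of parabolically induced modules developed in Sections 3.2--3.6, exploiting that for $\mathfrak{g}\in\{A_{1}^{(1)},A_{2}^{(2)}\}$ the set $\Pi^{\circ}=\Pi\smallsetminus\{\alpha_{0}\}$ consists of a single root $\alpha_{1}$, with $\mathfrak{g}(\{\alpha_{1}\})\cong\mathfrak{sl}_{2}$. Since these two algebras are precisely the ones analysed in \cite{Futorny1996,Bunke2009}, what follows is in essence a bookkeeping of those results in the present language.

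The first step is to apply Theorem \ref{thm:Main}: the irreducible non-dense module $V$ contains a vector $v$ killed by the nilpotent radical $\mathcal{N}=\mathfrak{g}_{P\smallsetminus\sigma(P)}$ of a parabolic subalgebra $\mathfrak{p}(P)=\mathcal{L}\oplus\mathcal{N}$. The second step is to list the finitely many possibilities for $\mathcal{L}$. Its isomorphism type is controlled by the additively closed symmetric set $P\cap\sigma(P)$, and in a rank-$2$ affine root system the only proper such subsets are $\emptyset$, a single real root string $(\mathbb{Z}\alpha)\cap\Delta$ with $\alpha\in\Delta^{\mathrm{re}}$ (which generates the $\mathfrak{sl}_{2}$ copy $\mathfrak{g}(\{\alpha\})$), and the set $\mathbb{Z}_{\ne0}\delta$ of imaginary roots. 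By Theorem \ref{thm:parabolic induction}, and in agreement with the classification of parabolic systems in Theorem \ref{thm:positive system by tuple} and the theorem following it (whose case (i)(a) is vacuous here since $X\subsetneq\Pi^{\circ}$ forces $X=\emptyset$), this leaves $\mathcal{L}$ of type {\scriptsize \bf (I)} with $\mathcal{L}\in\{\mathfrak{h},\,\mathfrak{h}+\mathfrak{g}(\{\alpha\})\}$, or of type {\scriptsize \bf (II)}, which for $S=\emptyset$ is the natural Borel $\mathfrak{b}$ with Levi part $\mathfrak{h}+\mathcal{H}$.

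The third step treats these cases. If $\mathcal{L}$ is of type {\scriptsize \bf (I)}, then $\mathcal{N}=\mathcal{N}_{S^{c}}^{\pm}$ with $S=\emptyset$ or $S=\{\alpha\}$, and Proposition \ref{prop:G-mod induced GVM} yields $V\cong L_{S}^{\pm}(\lambda,V_{S})$ for an irreducible $\mathcal{L}_{S}$-module $V_{S}$; expressing $V_{S}$ as an irreducible weight $\mathfrak{sl}_{2}$-module with central character $\gamma$ (highest- or lowest-weight, finite-dimensional, or cuspidal, by the Fernando--Mathieu classification recalled in Theorem \ref{Prop: 3.6. Classifiaction of irrep L_S}) identifies $V$ with $L_{\alpha}^{\pm}(\lambda,\gamma)$, the subcase $S=\emptyset$ being absorbed since a classical highest-weight vector is also annihilated by $\mathcal{N}_{\{\alpha\}^{c}}^{+}$ and generates a highest-weight $\mathfrak{sl}_{2}$-module. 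If $\mathcal{L}=\mathfrak{h}+\mathcal{H}$, then $v$ is primitive for the natural Borel and $V=\mathcal{U}(\mathfrak{g})v=\mathcal{U}(\mathfrak{n}_{-})\,\mathcal{U}(\mathfrak{h}+\mathcal{H})\,v$ has support inside $\lambda-\mathrm{span}_{\mathbb{N}_{0}}\Pi^{\circ}+\mathbb{Z}\delta$, so $V\in\tilde{\mathcal{O}}(\Pi^{\circ})$; Proposition \ref{pro: loop modules}(iii) then gives $V\cong L_{\mathfrak{b}}(V')$ for an irreducible $\mathbb{Z}$-graded $\mathcal{H}$-module $V'$. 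For central charge $0$ this is $L_{\mathfrak{b}}(L_{r,\Lambda})$ by part (iv); for central charge $a\ne0$ the hypothesis that some weight space of $V$ is finite-dimensional passes to the top $\mathbb{Z}\delta$-coset of $L_{\mathfrak{b}}(V')$, which is isomorphic as a graded space to $V'$, so that $V'$ has a finite-dimensional component and hence $V'\cong M^{\pm}(\lambda)$ by the Heisenberg classification, giving $V\cong L_{\mathfrak{b}}(M^{\pm}(\lambda))$ — exactly what Proposition \ref{pro: loop modules}(v) packages.

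The last step is the pairwise non-equivalence and the final assertion, settled by comparing supports and weight multiplicities: $L_{\alpha}^{\pm}(\lambda,\gamma)$ has uniformly finite-dimensional weight spaces and support bounded on one side in the $\delta$-direction; $L_{\mathfrak{b}}(L_{r,\Lambda})$ has central charge $0$ and support bounded in the $\mathrm{span}_{\mathbb{N}_{0}}\Pi^{\circ}$-direction; and $L_{\mathfrak{b}}(M^{\pm}(\lambda))$, a quotient of $\mathcal{U}(\mathfrak{n}_{-})\otimes M^{\pm}(\lambda)$ with $\mathfrak{n}_{-}$ an infinite sum of root spaces $\mathfrak{g}_{-\alpha+n\delta}$, has infinite-dimensional weight spaces; these invariants separate the three families and, when every weight space of $V$ is finite-dimensional with $\lambda(c)=a\ne0$, rule out the loop and imaginary-Verma cases, leaving $V\cong L_{\alpha}^{\pm}(\lambda,\gamma)$. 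The substantive content is Theorem \ref{thm:Main}; granting it, the only delicate points are the exhaustiveness of the case list (which is where $|\Pi^{\circ}|=1$ is essential) and, in the type-{\scriptsize \bf (II)} case of nonzero central charge, the transfer of the finiteness hypothesis from $V$ to the inducing Heisenberg module — precisely the issue resolved by Proposition \ref{pro: loop modules}(v) together with the graded Heisenberg classification. Beyond that I expect no real obstacle.
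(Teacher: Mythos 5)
The paper does not actually prove this theorem: it is stated with a citation to \cite{Futorny1996,Bunke2009} and used as known input (indeed, the rank-two cases serve as the induction start elsewhere in the paper). Your reconstruction — primitive vector from Theorem \ref{thm:Main}, enumeration of the proper parabolics of a rank-two affine root system via the symmetric part $P\cap\sigma\left(P\right)$ (empty, a single real $\mathfrak{sl}_{2}$-pair, or $\mathbb{Z}_{\ne0}\delta$), then Proposition \ref{prop:G-mod induced GVM} in the type {\scriptsize \bf (I)} cases and Proposition \ref{pro: loop modules}(iii)--(v) in the type {\scriptsize \bf (II)} case — is the natural derivation and is essentially the argument of the cited references, so as a route it is the intended one rather than a genuinely different one.

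One step deserves more care than you give it. For the pairwise non-equivalence and for the final assertion you need that $L_{\mathfrak{b}}\left(M^{\pm}\left(\lambda\right)\right)$ has infinite-dimensional weight spaces off the top $\mathbb{Z}\delta$-coset when $\lambda\left(c\right)\ne0$. Your computation shows this for the induced module $\mathcal{U}\left(\mathfrak{n}_{-}\right)\otimes M^{\pm}\left(\lambda\right)$, where the weight space at $\lambda-\alpha+m\delta$ receives infinitely many one-dimensional contributions; but the theorem concerns the irreducible quotient, and a priori the quotient could collapse these spaces to finite dimension. That it does not is itself a non-trivial result of \cite{Futorny1996} (closely tied to the irreducibility of imaginary Verma modules at non-zero central charge), so this point should be flagged as an input rather than as an immediate consequence of the Verma-module count. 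Similarly, your remark that a finite-dimensional weight space of $V$ "passes to the top coset" is stated backwards — the correct logic is the one packaged in Proposition \ref{pro: loop modules}(v), which you do invoke. With those two attributions made explicit, the argument is sound.
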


Beyond the conjectures and Theorem \ref{thm:Main}, the following
classification problems for irreducible weight $\mathfrak{g}$-modules
are open problems: \\
\noindent\begin{minipage}[t]{1\columnwidth}%
\begin{itemize} \item[(i)] Modules of type $L_{S}\left(\lambda,V\right)$
where $V$ is a graded irreducible $\mathcal{H}$-module of non-zero
central charge with all infinite-dimensional components. Here, recent
progress has been made in \cite{FutornyKashuba2009} and \cite{BekkertBenkartFutornyKashuba}.

\item[(ii)] Dense $\mathfrak{g}$-modules of central charge zero. 

\item[(iii)] Dense $\mathfrak{g}$-modules of non-zero central charge
with an infinite-dimensional weight subspace. 

\end{itemize}%
\end{minipage}

\section{Quasicone Arithmetics}

\subsection{Pre-prosolvable subalgebras}

Recall that a Lie algebra $\mathfrak{s}$ is called solvable if the
derived series yields $\left\{ 0\right\} $ after finitely many steps.
Fix a positive integer $N$. The Lie algebra
\[
\mathcal{T}_{N}\mathfrak{g}=\mathfrak{g}\otimes\Bbbk\left[t\right]/t^{N+1}\Bbbk\left[t\right]
\]
is called truncated current Lie algebra \cite{Wilson2011,Takiff1971}.
It inherits the triangular decomposition from $\mathfrak{g}$. 
\begin{defn}
\label{def:pre-prosolvable}A Lie subalgebra $\mathfrak{s}\subset\mathfrak{g}$
is called \\
\begin{minipage}[t]{0.9\columnwidth}%
\begin{itemize}
\item[ {\scriptsize \bf (P)} ]\emph{perfect,} if\emph{ }$\left[\mathfrak{s},\mathfrak{s}\right]=\mathfrak{s}$,

\item[ {\scriptsize \bf (HA)} ] 

\emph{hypoabelian, }if its perfect radical (or perfect core), i.e.
its largest perfect subalgebra, is trivial,

\item[ {\scriptsize \bf (PS)} ] 

pre-prosolvable\emph{,} if the completion of $\mathfrak{s}/\Bbbk c$
is isomorphic to the projective limit of an inverse system of solvable
Lie algebras.\emph{ }

\end{itemize} 
\vspace{0pt}%
\end{minipage}
\end{defn}

Note that the inverse limit $\underleftarrow{\lim}\mathfrak{g}\otimes\Bbbk\left[t\right]/t^{N+1}\Bbbk\left[t\right]=\mathfrak{g}\otimes\Bbbk\left[\left[t\right]\right]$
is the completion of the loop algebra $\mathfrak{g}\otimes\Bbbk\left[\left[t\right]\right]$
with respect to an appropriate topology, for instance the product
topology on $\mathfrak{g}^{N}$. 

Construct a pre-prosolvable subalgebra $\mathfrak{s}$ as follows:
Let $\mathfrak{g}^{\circ}$ be locally finite with Cartan algebra
$\mathfrak{h}^{\circ}$ and consider $\left\{ \mathcal{T}_{N}\mathfrak{g}^{\circ}\mid N\in\mathbb{N}_{0}\right\} $
as a directed family of solvable Lie algebras with the obvious epimorphisms
of Lie algebras $\pi_{n,m}:\mathcal{T}_{n}\mathfrak{g}^{\circ}\rightarrow\mathcal{T}_{m}\mathfrak{g}^{\circ}$,
$n\ge m$ and the limit $\mathfrak{r}=\mathfrak{g}^{\circ}\otimes\Bbbk\left[\left[t\right]\right]=\underleftarrow{\lim}\mathfrak{g}^{\circ}\otimes\Bbbk\left[t\right]/t^{N+1}\Bbbk\left[t\right]$.
Now, $\mathfrak{s}$ is the algebra of polynomials in $\mathfrak{r}$.

Now, if the quotient of a locally affine Lie algebra $\mathfrak{g}$
by its center admits a non-trivial Lie algebra homomorphism $\varphi:\mathfrak{g}/\Bbbk c\rightarrow\mathcal{T}_{N}\mathfrak{g}^{\circ}$,
the image of a pre-prosolvable subalgebra $\mathfrak{s}\subset\mathfrak{g}$
under the map $\varphi$ is solvable for every $N\in\mathbb{Z}_{+}$. 
\begin{prop}
The pre-prosolvable subalgebra $\mathfrak{s}$ is hypoabelian.
\end{prop}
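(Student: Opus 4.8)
The goal is to show that the pre-prosolvable subalgebra $\mathfrak{s}$ constructed above has trivial perfect radical. The natural strategy is to argue by contradiction: suppose $\mathfrak{r}_{\mathrm{perf}}\subset\mathfrak{s}$ is a non-trivial perfect subalgebra, $\left[\mathfrak{r}_{\mathrm{perf}},\mathfrak{r}_{\mathrm{perf}}\right]=\mathfrak{r}_{\mathrm{perf}}\ne\left\{0\right\}$, and derive a contradiction from the defining property of $\mathfrak{s}$, namely that the completion of $\mathfrak{s}/\Bbbk c$ is the projective limit $\underleftarrow{\lim}\,\mathcal{T}_{N}\mathfrak{g}^{\circ}=\mathfrak{g}^{\circ}\otimes\Bbbk\left[\left[t\right]\right]$ of the solvable truncated current algebras $\mathcal{T}_{N}\mathfrak{g}^{\circ}$. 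The key point is that each $\mathcal{T}_{N}\mathfrak{g}^{\circ}$ is solvable — here I would use that $\mathfrak{g}^{\circ}$ was taken to be \emph{locally finite} (in the construction above it is in fact a Cartan or Borel-type piece, not the whole simple algebra), so that $\mathcal{T}_{N}\mathfrak{g}^{\circ}$ inherits solvability with a bounded derived length, say $\ell$, independent of $N$.

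\textbf{Key steps, in order.} First I would record that under the projection $\pi_{N}:\mathfrak{s}/\Bbbk c\to\mathcal{T}_{N}\mathfrak{g}^{\circ}$ (composition of the inclusion into the completion with the $N$-th structure map of the inverse system), the image $\pi_{N}(\mathfrak{r}_{\mathrm{perf}})$ is again perfect, since a Lie algebra homomorphism carries $\left[\cdot,\cdot\right]$ onto $\left[\cdot,\cdot\right]$ of the image. Second, a perfect subalgebra of a solvable Lie algebra is zero: indeed if $\mathfrak{a}$ is perfect then $\mathfrak{a}=\mathfrak{a}^{(k)}$ for all $k$, while solvability of the ambient algebra forces $\mathfrak{a}^{(k)}=\left\{0\right\}$ for $k\ge\ell$. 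Hence $\pi_{N}(\mathfrak{r}_{\mathrm{perf}})=\left\{0\right\}$ for every $N$. Third, this means the image of $\mathfrak{r}_{\mathrm{perf}}$ in the completion of $\mathfrak{s}/\Bbbk c$ lies in $\bigcap_{N}\ker\pi_{N}$, which is $\left\{0\right\}$ because the maps $\pi_{N}$ separate points of the projective limit (this is exactly the statement that $\mathfrak{g}^{\circ}\otimes\Bbbk\left[\left[t\right]\right]$ is the inverse limit, equipped with the product topology in which it is Hausdorff). Fourth, I pull this back: the composite $\mathfrak{s}\to\mathfrak{s}/\Bbbk c\hookrightarrow\widehat{\mathfrak{s}/\Bbbk c}$ has kernel $\Bbbk c$, which is central and hence contains no non-zero perfect subalgebra; combined with the previous step this gives $\left[\mathfrak{r}_{\mathrm{perf}},\mathfrak{r}_{\mathrm{perf}}\right]\subset\Bbbk c$, and perfectness then yields $\mathfrak{r}_{\mathrm{perf}}\subset\Bbbk c$, whence $\mathfrak{r}_{\mathrm{perf}}=\left[\mathfrak{r}_{\mathrm{perf}},\mathfrak{r}_{\mathrm{perf}}\right]=\left\{0\right\}$, a contradiction.

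\textbf{Main obstacle.} The delicate point is the interaction between the center $\Bbbk c$ and the completion: the definition of pre-prosolvable only controls $\mathfrak{s}/\Bbbk c$ after completing, so one must make sure that quotienting by $\Bbbk c$ and passing to the completion do not create or destroy a perfect radical. The argument above handles this because $\Bbbk c$ is central (step four), but one should double-check that $\mathfrak{s}$ is genuinely a subalgebra of the ambient Lie algebra $\mathfrak{g}$ containing $\Bbbk c$ in its center, so that the sequence $0\to\Bbbk c\to\mathfrak{s}\to\mathfrak{s}/\Bbbk c\to 0$ behaves as claimed; in the construction $\mathfrak{s}$ is the algebra of polynomials inside $\mathfrak{r}=\mathfrak{g}^{\circ}\otimes\Bbbk\left[\left[t\right]\right]$, so in fact $c$ does not even appear and the quotient by $\Bbbk c$ is harmless. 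A secondary, more bookkeeping-level subtlety is confirming that the derived length of $\mathcal{T}_{N}\mathfrak{g}^{\circ}$ is bounded uniformly in $N$ — this follows because $\mathfrak{g}^{\circ}$ is locally finite solvable (nilpotent-plus-torus, the Borel/Cartan situation relevant here) and $t\Bbbk\left[t\right]/t^{N+1}\Bbbk\left[t\right]$ is a nilpotent ideal, so that $\mathcal{T}_{N}\mathfrak{g}^{\circ}$ is an extension of $\mathfrak{g}^{\circ}$ by a nilpotent ideal and its derived length is at most $\mathrm{dl}(\mathfrak{g}^{\circ})+1$, independent of $N$. Once these two points are pinned down, the contradiction is immediate.
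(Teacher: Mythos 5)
Your argument reaches the right conclusion, and its skeleton (argue by contradiction, push the putative perfect radical through the homomorphisms into the solvable truncations, then pull back through the central kernel $\Bbbk c$) matches the paper's, but the decisive step is genuinely different. The paper argues that if the perfect radical $\mathfrak{r}$ is non-trivial then the derived series of $\mathfrak{s}$ stabilizes at $\mathfrak{r}$ after finitely many steps, that a non-trivial perfect Lie algebra must contain a copy of $\mathfrak{sl}_{2}$, and that this contradicts $\mathfrak{g}^{\circ}$ being locally finite ``of arbitrary choice''; both intermediate claims are delicate (the derived series of a hypoabelian algebra such as $\mathfrak{g}^{\circ}\otimes t\Bbbk\left[t\right]$ never becomes constant, and the Levi-type statement is only standard in finite dimension). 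You replace this with the elementary observations that a homomorphic image of a perfect subalgebra is perfect, that a perfect subalgebra of a solvable Lie algebra is zero, and that the canonical projections out of a projective limit separate points, so the perfect radical lands in $\bigcap_{N}\ker\pi_{N}=\left\{ 0\right\} $ and then in the central, hence abelian, kernel $\Bbbk c$. This buys a proof that needs no structure theory of perfect Lie algebras and works uniformly for the abstract definition {\scriptsize \bf (PS)}, at the price of having to check injectivity of $\mathfrak{s}/\Bbbk c$ into its completion (which you flag, and which holds for the concrete $\mathfrak{s}$). One correction to your reading, though: $\mathcal{T}_{N}\mathfrak{g}^{\circ}$ itself is \emph{not} solvable when $\mathfrak{g}^{\circ}$ contains $\mathfrak{sl}_{2}$ (it surjects onto $\mathfrak{g}^{\circ}$), and the paper's own proof insists that $\mathfrak{g}^{\circ}$ does contain $\mathfrak{sl}_{2}$, so your patch ``$\mathfrak{g}^{\circ}$ is a Borel or Cartan piece'' is not the intended setting. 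The inverse system of solvable algebras demanded by {\scriptsize \bf (PS)} should be taken to be the images $\pi_{N}\left(\mathfrak{s}\right)$ (nilpotent for the quasicone-type $\mathfrak{s}$ built from $\mathfrak{h}^{\circ}\otimes t\Bbbk\left[t\right]$ and positive $t$-degree root vectors), not the full truncations; your argument goes through verbatim with that substitution, since you only ever use the images.
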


\begin{proof}
Assume $\mathfrak{g}^{\circ}$ to be locally finite of arbitrary choice.
Now the assumption is that the derived series becomes constant $\varphi\left(\mathfrak{s}\right)^{\left(m\right)}=\left\{ 0\right\} $
for some $m\in\mathbb{Z}_{+}$. We want to show that the perfect radical
of $\mathfrak{s}$ is trivial.

Assume on the contrary, the perfect radical of $\mathfrak{s}$ is
non-trivial. Consequently the derived series becomes constant $\mathfrak{s}^{\left(i\right)}=\mathfrak{r}\ne\left\{ 0\right\} $
for all $i$ large enough. This is equivalent to saying $\mathfrak{s}^{\left(i\right)}$
lies in the kernel of $\varphi$ for all $i\ge m$. Since $\mathfrak{r}$
is perfect itself, it must contain a subalgebra isomorphic to $\mathfrak{sl}_{2}$.
If $\mathfrak{r}$ lies in the kernel of $\varphi$ for every homomorphism,
then $\mathfrak{g}^{\circ}$ must not contain a subalgebra isomorphic
to $\mathfrak{sl}_{2}$, which is a contradiction to $\mathfrak{g}^{\circ}$
being locally finite of arbitrary choice.
\end{proof}
For $x\in\mathcal{H}'=\mathfrak{h}^{\circ}\otimes\Bbbk\left[t\right]\cap\left\{ \left[\mathfrak{s}_{-\alpha},\mathfrak{s}_{\alpha}\right]\mid\alpha\in\Delta\right\} $,
$\left(\mathfrak{s}_{\alpha}=\mathfrak{g}_{\alpha}\cap\mathfrak{s}\right)$,
let $\nu\left(x\right)$ be the greatest number such that $t^{-\nu}x$
is a non-zero element in $\mathfrak{h}^{\circ}\otimes\Bbbk\left[t\right]\cap\mathfrak{s}$.
The number $g=\max_{x\in\mathcal{H}'}\nu\left(x\right)$ will be called
the \emph{gap} of $\mathcal{H}'$. Let further $p_{\alpha}$ be the
projection maps parametrized by $\alpha\in\Delta_{+}$ such that $p_{\alpha}\left(\mathfrak{g}\right)\cong\widehat{\mathcal{L}}\left(\mathfrak{sl}_{2}\left(\Bbbk\right)\right)$. 
\begin{defn}
(i) \label{def: quasicone loc affine}A \emph{conic subalgebra} is
a pre-prosolvable subalgebra that contains $\mathfrak{h}^{\circ}\otimes t\Bbbk\left[t\right]$.
\\
(ii) The conic subalgebra $\mathfrak{s}$ is a \emph{quasicone subalgebra}
if it has trivial intersection with $\mathfrak{h}$ and $p_{\alpha}\left(\mathfrak{s}/\mathcal{H}'\right)$
is not solvable for any $\alpha\in\Delta_{+}$. 
\end{defn}

\begin{rem}
\noindent (i) The subalgebra of the locally finite Lie algebra of
type $A$, given by $\mathfrak{s}=\mathrm{cl}_{\mathfrak{g}}\left\{ e_{\alpha_{1}},e_{\alpha_{2}},\dots\right\} $
is also hypoabelian, yet not pre-prosolvable according to our definition.
With $\mathfrak{g}^{\circ}=A_{\infty}$ and the homomorphism $\varphi$
sending $x\mapsto x\otimes1$, the image of $\varphi$ is still not
solvable.

\noindent (ii) Apart from conic subalgebras there are other pre-prosolvable
subalgebras that are not nilpotent. The $A_{2}^{\left(1\right)}$-subalgebra
$\mathrm{cl}_{\mathfrak{g}}\left\{ e_{\alpha},e_{\beta},e_{-\alpha},e_{-\beta}\right\} \otimes t^{2}\Bbbk\left[t\right]$
has a non-finite derived series with $\mathfrak{a}^{\left(\mathbb{N}\right)}=\left\{ 0\right\} $.
A more exhaustive study of affine Kac-Moody subalgebras can be found
in \cite{FeliksonRetakhTumarkin2008}.
\end{rem}

\subsection{\label{sec:tropical-matrix-alg}The tropical matrix algebra of quasicone
subalgebras for $A_{n}^{\left(1\right)}$}

From now on, let $\mathfrak{g}$ be $A_{n}^{\left(1\right)}$. We
use the notations $e_{k\delta}^{\left(\alpha\right)}=t^{k}\otimes h_{\alpha}$
for $k\in\mathbb{Z}$ and $\alpha\in\Delta$, and $e_{I\delta}^{\left(\alpha,\beta,\dots\right)}=\left\{ e_{k\delta}^{\left(\alpha\right)},e_{k\delta}^{\left(\beta\right)},\dots\mid k\in I\subseteq\mathbb{Z}\right\} $.
Let $\Pi^{\circ}=\left\{ \alpha_{1},\dots,\alpha_{n}\right\} $ be
the standard root basis for $\Delta_{+}^{\circ}$.

\noindent \begin{flushleft}
\textbf{Notation.} We denote a $\mathfrak{g}$-subspace
\begin{eqnarray}
 &  & \mathfrak{X}=\mathrm{span}_{\mathbb{C}}\left\{ \begin{array}{l|c}
e_{\alpha_{i}+\cdots+\alpha_{j}+\left(A_{i,j+1}+S\right)\delta} & e_{\Omega_{i,j}\delta}^{\left(\alpha_{i}+\cdots+\alpha_{j}\right)}\,,c\,,d\\
e_{-\left(\alpha_{i}+\cdots+\alpha_{j}\right)+\left(A_{j+1,i}+S\right)\delta} & \left(i\le j\in\left\{ 1,\dots,n\right\} \right)
\end{array}\right\} \label{eq:subspace}
\end{eqnarray}
with $A_{ij},\Omega_{i,j}\subseteq\mathbb{Z}$ and the sum of sets
being the Minkowski sum, i.e. $A+\Omega=\left\{ a+\omega\mid a\in A,\,\omega\in\Omega\right\} $,
by 
\[
\left\{ \begin{array}{ccccc|}
* & A_{0,1} & A_{0,2} & \cdots & A_{0,n}\\
A_{1,0} & * &  &  & \\
A_{2,0} &  & \ddots & \ddots & \vdots\\
\vdots &  & \ddots &  & A_{n-1,n}\\
A_{n,0} &  & \cdots & A_{n,n-1} & *
\end{array}\begin{array}{ccccc}
\mathbb{Z} & \Omega_{0,1} & \Omega_{0,2} & \cdots & \Omega_{0,n}\\
\Omega_{1,0} & \mathbb{Z}\\
\Omega_{2,0} &  & \ddots & \ddots & \vdots\\
\vdots &  & \ddots &  & \Omega_{n-1,n}\\
\Omega_{n,0} &  & \cdots & \Omega_{n,n-1} & \mathbb{Z}
\end{array}\right\} .
\]
\par\end{flushleft}

\noindent \begin{flushleft}
Let $\check{\alpha}_{i}$ be the standard basis of $\mathfrak{h}^{\circ}$.
Since $\mathbb{C}\left(\alpha_{i}+\cdots+\alpha_{j}\right)^{\vee}\otimes t^{l}$,
$i\le j\in\left\{ 1,\dots,n\right\} ,\,l\in\mathbb{N}_{0}$, are vector
spaces, the relations 
\begin{equation}
\Omega_{i,j}=\Omega_{j,i}\text{ and }\Omega_{i,j}\cap\Omega_{i,k}\subset\Omega_{i,k}\label{eq:omega for Heisenberg}
\end{equation}
 must hold for all $i,j,k\in\left\{ 0,\dots,n\right\} $. Thus the
omegas are determined by a selection of $n$ omegas, such that the
other omegas can be generated by these relations, e.g. $\left\{ \Omega_{i,i}\right\} _{i=1,\dots,n}$.
\par\end{flushleft}

\noindent \begin{flushleft}
For $\mathfrak{X}$ to be a subalgebra, the sets $A_{i,j}$ and $\Omega_{i,j}$,
$\left(i\ne j\in\left\{ 0,\dots,n\right\} \right)$, have to satisfy
the relations 
\begin{align}
A_{i,j}+A_{j,i}\subset\Omega_{i,j} & \ \left(i,j\in\left\{ 0,\dots,n\right\} \right)\nonumber \\
A_{i,j}+A_{j,k}\subset A_{i,k} & \ \left(i\ne k\ne j\ne i\in\left\{ 0,\dots,n\right\} \right)\label{eq:relations subalgebra exponent sets}\\
A_{i,j}\supset A_{i,j}+\Omega_{k,l}\text{ and }A_{j,i}\supset A_{j,i}+\Omega_{k,l} & \ \left(i\ne j\right)\text{ for }i-1\le k<l\le j+1\nonumber 
\end{align}
It follows that $\Omega_{i,j}+\Omega_{j,k}\subset\Omega_{i,k}$ for
all $i\ne j\ne k\in\left\{ 0,\dots,n\right\} $.
\par\end{flushleft}
\begin{defn}
\label{Def: matrix presentation} Consider a $\mathfrak{g}$-subspace
as above. Denote a matrix $\mathcal{A}\in\mathcal{P}\left(\mathbb{Z}\right)^{n\times n}$
as \emph{presentation matrix}, if the matrix entries of
\[
\mathcal{A}=\left\{ \begin{array}{cccc}
A_{0} & A_{01} & \cdots & A_{0n}\\
A_{10} & A_{1} &  & A_{1n}\\
\vdots &  & \ddots & \vdots\\
A_{n0} & A_{n1} & \cdots & A_{n}
\end{array}\right\} 
\]
satisfy $\Omega_{0,1}=A_{0}$, $\Omega_{n-1,n}=A_{n}$, $\Omega_{i-1,i}\cup\Omega_{i,i+1}=A_{i}$,$\left(0<i<n\right)$
and $\Omega_{i,j}\cap\Omega_{j,k}=\Omega_{i,k}$ for all $i,j,k$
such that $i-1\le j\le k+1$.
\end{defn}

By the above considerations every presentation matrix can be associated
with a subalgebra. For a presentation matrix to correspond to a subalgebra
uniquely, we need more conditions to be satisfied.

Let $\uplus$ and $\cup$ be the matrix operations in $\mathcal{P}\left(\mathbb{Z}\right)^{n\times n}$
inherited from the underlined set algebra $\left(\mathcal{P}\left(\mathbb{Z}\right),+,\cup\right)$.
The additive identity matrix therein is given by
\[
\mathrm{I}=\left\{ \begin{array}{cccc}
\left\{ 0\right\}  & \emptyset & \cdots & \emptyset\\
\emptyset & \left\{ 0\right\}  &  & \emptyset\\
\vdots &  & \ddots & \vdots\\
\emptyset & \emptyset & \cdots & \left\{ 0\right\} 
\end{array}\right\} .
\]
\begin{prop}
If the presentation matrix $\mathcal{A}\in\mathcal{P}\left(\mathbb{Z}\right)^{\left(n+1\right)\times\left(n+1\right)}$
satisfies the relation 
\[
\mathcal{A}\uplus\left(\mathcal{A}\cup\mathrm{I}\right)=\mathcal{A}
\]
 then it identifies a subalgebra of $A_{n}^{\left(1\right)}$ uniquely. 
\end{prop}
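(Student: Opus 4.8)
The plan is to unpack what the matrix operation $\mathcal{A} \uplus (\mathcal{A} \cup \mathrm{I}) = \mathcal{A}$ says entry by entry, and show it is equivalent to the full list of relations \eqref{eq:relations subalgebra exponent sets} together with the consistency conditions \eqref{eq:omega for Heisenberg} on the omegas. Recall that $(\mathcal{A} \cup \mathrm{I})_{ij} = A_{ij}$ for $i \ne j$ and $= A_i \cup \{0\}$ for $i = j$, while the matrix product $\uplus$ with respect to $(+, \cup)$ gives $(\mathcal{A} \uplus \mathcal{B})_{ik} = \bigcup_{j} (A_{ij} + B_{jk})$. So the $(i,k)$-entry of the left-hand side is $\bigcup_{j \ne i, j \ne k} (A_{ij} + A_{jk}) \;\cup\; (A_{ik} + A_k \cup \{0\}) \;\cup\; (A_i \cup \{0\} + A_{ik})$, which collapses to $A_{ik} \;\cup\; \bigcup_{j} (A_{ij} + A_{jk}) \;\cup\; (A_{ik} + A_k) \;\cup\; (A_i + A_{ik})$ once one uses $0 \in A_i \cup \{0\}$.

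First I would treat the off-diagonal entries $i \ne k$. Setting this union equal to $A_{ik}$ forces, on one hand, $A_{ij} + A_{jk} \subset A_{ik}$ for every intermediate $j$ — which for $j \notin \{i,k\}$ is exactly the second relation of \eqref{eq:relations subalgebra exponent sets}, and for $j = i$ or $j = k$ (where $A_{ii} = A_i = \Omega$-type data) gives $A_i + A_{ik} \subset A_{ik}$ and $A_{ik} + A_k \subset A_{ik}$, i.e. the stability-under-$\Omega$-translation statements in the third line of \eqref{eq:relations subalgebra exponent sets}. Conversely, those inclusions make the union equal $A_{ik}$ since $A_{ik}$ itself is one of the terms. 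The diagonal entries $i = k$ give $\bigcup_j (A_{ij} + A_{ji}) \cup (A_i + A_i) = A_i$ (using $A_{ii} = A_i$), and the first summand $A_{ij} + A_{ji} \subset A_i$ recovers the first relation $A_{ij} + A_{ji} \subset \Omega_{i,j}$ — one must check the indexing of $\Omega_{i,j}$ versus the diagonal entry $A_i$ matches via Definition \ref{Def: matrix presentation}, i.e. that $\Omega_{i-1,i} \cup \Omega_{i,i+1} = A_i$ absorbs the relevant $A_{ij}+A_{ji}$; this is where the "presentation matrix" consistency conditions $\Omega_{i,j} \cap \Omega_{j,k} = \Omega_{i,k}$ do the bookkeeping. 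The remaining derived inclusion $\Omega_{i,j} + \Omega_{j,k} \subset \Omega_{i,k}$ stated after \eqref{eq:relations subalgebra exponent sets} then follows formally and guarantees that the $\mathfrak{X}$ reconstructed from $\mathcal{A}$ is closed under the bracket, hence a genuine subalgebra; uniqueness of the subalgebra attached to $\mathcal{A}$ is immediate because the off-diagonal entries determine the real root data directly and the diagonal entries determine the Heisenberg/Cartan-loop part via the fixed recipe in Definition \ref{Def: matrix presentation}.

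The main obstacle I anticipate is purely organizational rather than conceptual: the idempotence relation $\mathcal{A} \uplus (\mathcal{A} \cup \mathrm{I}) = \mathcal{A}$ bundles together several relations that in \eqref{eq:relations subalgebra exponent sets} carry different index ranges (some with $i \ne k \ne j \ne i$, some with the translation indices constrained by $i - 1 \le k < l \le j + 1$), and one must verify that the matrix identity produces exactly those ranges and no spurious extra constraints — in particular that the diagonal-term contributions $A_i + A_{ik}$ and $A_{ik} + A_k$ line up with the $\Omega_{k,l}$-translation clause under the correspondence $A_i \leftrightarrow \Omega_{i-1,i} \cup \Omega_{i,i+1}$. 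I would isolate this as a lemma comparing the two index sets, lean on \eqref{eq:omega for Heisenberg} and the presentation-matrix axioms to reconcile them, and then the proposition follows by combining the off-diagonal and diagonal computations above with Definition \ref{Def: matrix presentation}.
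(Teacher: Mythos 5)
Your proposal is correct and follows essentially the same route as the paper: both unpack the idempotence relation $\mathcal{A}\uplus\left(\mathcal{A}\cup\mathrm{I}\right)=\mathcal{A}$ into the entrywise inclusions $A_{i,j}\supset\bigcup_{k}\left(A_{i,k}+A_{k,j}\right)$, recover the three relations of \ref{eq:relations subalgebra exponent sets} from the off-diagonal and diagonal contributions, and use the presentation-matrix axioms $\Omega_{i,j}\cap\Omega_{j,k}=\Omega_{i,k}$ to reconcile the $\Omega$-index bookkeeping. The index-range reconciliation you flag as a lemma is exactly the chain $\Omega_{k,l}=\Omega_{k,r}\cap\Omega_{r,l}\subset\Omega_{k,r}$ carried out inline in the paper's proof.
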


\begin{proof}
\noindent \begin{flushleft}
Let $\mathcal{A}$ be a presentation matrix. Relations \ref{eq:relations subalgebra exponent sets}
are a sufficient condition for $\mathfrak{X}$ to be a subalgebra,
because they take the Lie algebra relations into account, in particular
\begin{align*}
A_{i,j}+A_{j,i}\subset\Omega_{j,j} & \Leftrightarrow\mathbb{C}\left[e_{\alpha}\otimes t^{n_{1}},e_{-\alpha}\otimes t^{n_{2}}\right]=\mathbb{C}\check{\alpha}\otimes t^{n_{1}+n_{2}}\\
A_{i,j}+A_{j,k}\subset A_{i,k} & \Leftrightarrow\mathbb{C}\left[e_{\alpha}\otimes t^{n_{1}},e_{\beta}\otimes t^{n_{2}}\right]=\mathbb{C}e_{\alpha+\beta}\otimes t^{n_{1}+n_{2}}\\
A_{i,j}+\Omega_{k,l}\subset A_{i,j} & \Leftrightarrow\mathbb{C}\left[\check{\alpha}\otimes t^{n_{1}},e_{\beta}\otimes t^{n_{2}}\right]=\mathbb{C}e_{\beta}\otimes t^{n_{1}+n_{2}}\text{ if }\left(\alpha\mid\beta\right)\ne0
\end{align*}
for indices as above and corresponding roots $\alpha$ and $\beta$. 
\par\end{flushleft}
\noindent \begin{flushleft}
Denote $A_{ii}=A_{i}$. Then $\mathcal{A}\uplus\left(\mathcal{A}\cup\mathrm{I}\right)=\mathcal{A}$
is equivalent to $A_{i,j}=A_{i,j}\cup\bigcup_{k=0,\dots,n}\left(A_{i,k}+A_{k,j}\right)$
for all $i,j\in\left\{ 1,\dots,n\right\} $ or $A_{i,j}\supset\bigcup_{k=0,\dots,n}\left(A_{i,k}+A_{k,j}\right)$.
The first relation from \ref{eq:relations subalgebra exponent sets}
follows from 
\begin{align*}
A_{i,j}+A_{j,i} & \subset\bigcup_{k}\left(A_{i,k}+A_{k,i}\right)\subset A_{i,i}=\begin{cases}
\Omega_{0,1} & \text{if }i=0\\
\Omega_{n-1,n} & \text{if }i=n\\
\Omega_{i-1,i}\cup\Omega_{i,i+1} & \text{if }0<i<n
\end{cases}
\end{align*}
and $\Omega_{i-1,i}\cup\Omega_{i.i+1}=\left(\Omega_{i-1,j}\cap\Omega_{j,i}\right)\cup\left(\Omega_{i,j}\cap\Omega_{j,i+1}\right)=\left(\Omega_{i-1,j}\cup\Omega_{i+1,j}\right)\cap\Omega_{i,j}\subset\Omega_{i,j}$
for $i-1\le j\le i+1$ (Definition \ref{Def: matrix presentation}).
The second is obvious. For the third, we observe 
\[
A_{i,j}\supset\left(A_{i,j}+A_{j,j}\right)\cup\left(A_{i,i}+A_{i,j}\right)=A_{i,j}+\left(A_{i}\cup A_{j}\right)
\]
 and herein the second term, $A_{i}\cup A_{j}=\bigcup_{m=i,i+1,j,j+1}\Omega_{m-1,m}$.
Since 
\[
\Omega_{k,l}=\Omega_{k.r}\cap\Omega_{r,l}=\Omega_{k,r}\cap\left(\Omega_{r,k}\cap\Omega_{k,l}\right)=\Omega_{k,r}\cap\Omega_{k,l}
\]
and thus $\Omega_{k,l}\subset\Omega_{k,r}$ for $r$ such that $k-1\le r\le l+1$
(Definition \ref{Def: matrix presentation}). Repeating this step
results in $\Omega_{k,r}\subset\Omega_{s,r}$ for $s$ such that $k-1\le s\le r+1$.
Consequently, $\Omega_{k,l}\subset\Omega_{s,r}$ for $k,l$ such that
$r-1\le l$, $k\le s+1$ and thus $A_{i,j}\supset A_{i,j}+\Omega_{k,l}$
and $A_{j,i}\supset A_{j,i}+\Omega_{k,l}$ $\left(i<j\right)$, for
$i-1\le k<l\le j+1$ as claimed.
\par\end{flushleft}
For $\mathcal{S}ub\left(\mathfrak{g}\right)$ the category of subalgebras
of $\mathfrak{g}$, this implies also that all coefficients that determine
$\mathfrak{X}\in\mathcal{S}ub\left(A_{n}^{\left(1\right)}\right)$
are uniquely determined. 
\end{proof}
Denote the category of presentation matrices $\mathcal{A}\in\mathcal{P}\left(\mathbb{Z}\right)^{\left(n+1\right)\times\left(n+1\right)}$
that satisfy $\mathcal{A}\uplus\left(\mathcal{A}\cup\mathrm{I}\right)=\mathcal{A}$
by $\mathcal{E}_{n}^{\star}$ and the set of subalgebras of $\mathfrak{g}=A_{n}^{\left(1\right)}$
that contain $c$ and $d$ by $\mathcal{S}ub\left(\mathfrak{g}\right)$.
With this, define a map 
\[
\mathcal{E}_{n}^{\star}\rightarrow\mathcal{S}ub\left(\mathfrak{g}\right):\:\mathcal{A}\mapsto\mathfrak{X}
\]

according to Equation \ref{eq:subspace}. The preimage of an $A_{n}^{\left(1\right)}$-subalgebra
$\mathfrak{X}$ under this map will be called \emph{matrix presentation}
of $\mathfrak{X}$\index{matrix presentation of an A_{n}^{left(1right)}-subalgebra@matrix presentation of an $A_{n}^{\left(1\right)}$-subalgebra}. 
\begin{notation}
We may also abuse the matrix notation to denote the Lie algebra closure
\begin{align*}
\mathrm{cl}_{\mathfrak{g}}\left(\left\{ e_{\alpha_{i}+\cdots+\alpha_{j}+A_{i,j+1}\delta},\:e_{-\left(\alpha_{i}+\cdots+\alpha_{j}\right)+A_{j+1,i}\delta}\mid i\le j\in\left\{ 1,\dots,n\right\} \right\} \right.\\
\cup\left.\left\{ e_{A_{i}\delta}^{\alpha_{i}},\,e_{A_{i}\delta}^{\alpha_{i+1}},\,e_{A_{0}\delta}^{\alpha_{1}},\,e_{A_{n}\delta}^{\alpha_{n}}\mid i\in\left\{ 1,\dots,n-1\right\} \right\} \cup\left\{ c,d\right\} \right)
\end{align*}
if this is clear from context. In this case the denomination is not
unique, e.g. 
\[
\left\{ \begin{array}{ccc}
* & * & *\\
* & \left\{ 1\right\}  & \left\{ 1\right\} \\
* & \left\{ 1\right\}  & \left\{ 1\right\} 
\end{array}\right\} =\left\{ \begin{array}{ccc}
* & * & *\\
* & \mathbb{Z}_{+} & \mathbb{Z}_{+}\\
* & \mathbb{Z}_{+} & \mathbb{Z}_{+}
\end{array}\right\} 
\]
where the left hand side is not a presentation matrix.
\end{notation}

\begin{defn}
If all of the sets $A_{i,j}$ and $A_{i}$, $\left(i,j=1,\dots,n\right)$
are of type $\mathbb{Z}_{\ge k}$, then we use round paranthesis and
write 
\[
\left\{ \begin{array}{cccc}
\mathbb{Z}_{\ge k_{0}} & \mathbb{Z}_{\ge k_{01}} & \mathbb{Z}_{\ge k_{02}} & \cdots\\
\mathbb{Z}_{\ge k_{10}} & \mathbb{Z}_{\ge k_{1}} & \mathbb{Z}_{\ge k_{12}}\\
\mathbb{Z}_{\ge k_{20}} & \mathbb{Z}_{\ge k_{21}} & \mathbb{Z}_{\ge k_{2}}\\
\vdots &  &  & \ddots
\end{array}\right\} =\left(\begin{array}{cccc}
k_{0} & k_{01} & k_{02} & \cdots\\
k_{10} & k_{1} & k_{12}\\
k_{20} & k_{21} & k_{2}\\
\vdots &  &  & \ddots
\end{array}\right).
\]
\end{defn}

\begin{fact}
If $k_{0}=k_{1}=\cdots=k_{n}=1$ then these subalgebras are quasicones,
i.e. elements in $\mathfrak{C}$. 
\end{fact}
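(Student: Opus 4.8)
The plan is to check, directly from the matrix presentation, the four ingredients of Definition~\ref{def: quasicone loc affine}: that \(\mathfrak{X}\) is pre-prosolvable, that it contains \(\mathfrak{h}^{\circ}\otimes t\Bbbk[t]\), that it meets the Cartan subalgebra of \(\mathfrak{g}^{\circ}\) trivially, and that \(p_{\alpha}(\mathfrak{X}/\mathcal{H}')\) is not solvable for any \(\alpha\in\Delta_{+}\) (that \(\mathfrak{X}\) is a subalgebra follows from the Proposition above). The hypothesis \(k_{0}=\dots=k_{n}=1\) is what drives the argument: by Definition~\ref{Def: matrix presentation} it forces \(\Omega_{i-1,i}=\mathbb{Z}_{\ge1}\) for every consecutive pair, hence — via the relations~\ref{eq:omega for Heisenberg} and the intersection rule of Definition~\ref{Def: matrix presentation} — every \(\Omega_{i,j}=\mathbb{Z}_{\ge1}\), and then the last line of~\ref{eq:relations subalgebra exponent sets} makes each root exponent set upward closed, \(A_{i,j}=\mathbb{Z}_{\ge k_{i,j}}\). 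In particular \(0\notin\Omega_{i,j}\); and since these subalgebras contain no negative power of \(t\) outside \(\Bbbk c\oplus\Bbbk d\), one may write \(\mathfrak{X}=\mathfrak{X}_{0}\oplus\mathfrak{X}^{+}\oplus\Bbbk c\oplus\Bbbk d\) with \(\mathfrak{X}^{+}=\mathfrak{X}\cap(\mathfrak{g}^{\circ}\otimes t\Bbbk[t])\) and \(\mathfrak{X}_{0}=\mathfrak{X}\cap\mathfrak{g}^{\circ}\) the span of the root spaces \(\mathfrak{g}_{\gamma}\) with \(0\in A_{\gamma}\); this index set is closed under addition and disjoint from its negative (if \(\gamma\) and \(-\gamma\) both qualified then \(0\in A_{\gamma}+A_{-\gamma}\subseteq\mathbb{Z}_{\ge1}\), impossible), so \(\mathfrak{X}_{0}\) is a nilpotent Lie algebra.

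For the first three conditions: every \(\Omega_{i,j}=\mathbb{Z}_{\ge1}\) puts \(\check{\alpha}_{l}\otimes t^{m}\) into \(\mathfrak{X}\) for every simple coroot \(\check{\alpha}_{l}\) and every \(m\ge1\), so \(\mathfrak{h}^{\circ}\otimes t\Bbbk[t]\subseteq\mathfrak{X}\); and \(0\notin\Omega_{i,j}\) means \(\mathfrak{X}\) contains no nonzero element of \(\mathfrak{h}^{\circ}\), which is the required triviality. For pre-prosolvability I would use the truncation homomorphisms \(\varphi_{N}\colon\mathfrak{X}/\Bbbk c\to(\mathcal{T}_{N}\mathfrak{g}^{\circ})\rtimes\Bbbk\bar{d}\) (reduction of \(t\) modulo \(t^{N+1}\)): the completion of \(\mathfrak{X}/\Bbbk c\) is \(\underleftarrow{\lim}_{N}\varphi_{N}(\mathfrak{X}/\Bbbk c)\), each \(\varphi_{N}(\mathfrak{X}/\Bbbk c)\) is a finite-dimensional Lie algebra with nilpotent ideal \(\varphi_{N}(\mathfrak{X}^{+})\) (its lower central series descends with the \(t\)-degree and vanishes past degree \(N\)) and with quotient \(\varphi_{N}(\mathfrak{X}/\Bbbk c)/\varphi_{N}(\mathfrak{X}^{+})\cong\mathfrak{X}_{0}\oplus\Bbbk\bar{d}\) solvable; hence every \(\varphi_{N}(\mathfrak{X}/\Bbbk c)\) is solvable and \(\mathfrak{X}\) is pre-prosolvable. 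So \(\mathfrak{X}\) is a conic subalgebra with trivial Cartan intersection.

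The core is the non-solvability of the projections. Since \(\mathfrak{X}\) has no negative \(t\)-power and no element of \(\mathfrak{h}^{\circ}\), for every root \(\beta\) of \(\mathfrak{g}\) the bracket \([\,\mathfrak{g}_{-\beta}\cap\mathfrak{X},\ \mathfrak{g}_{\beta}\cap\mathfrak{X}\,]\) vanishes: for a real \(\beta\) at most one of \(\mathfrak{g}_{\pm\beta}\) lies in \(\mathfrak{X}\) (the least \(t\)-exponents with which a root space and its opposite occur in \(\mathfrak{X}\) sum to \(\ge1\), by the first relation of~\ref{eq:relations subalgebra exponent sets}); for an imaginary \(\beta\ne0\) one of the two factors is already zero; and on \(\mathfrak{h}\) the bracket vanishes. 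Hence \(\mathcal{H}'=\mathfrak{h}^{\circ}\otimes\Bbbk[t]\cap\sum_{\beta}[\,\mathfrak{g}_{-\beta}\cap\mathfrak{X},\ \mathfrak{g}_{\beta}\cap\mathfrak{X}\,]=0\), so \(p_{\alpha}(\mathfrak{X}/\mathcal{H}')=p_{\alpha}(\mathfrak{X})\). Now fix \(\alpha=\alpha_{i}+\dots+\alpha_{j}\in\Delta_{+}\), let \(p=k_{i,j+1}\) and \(q=k_{j+1,i}\) (both finite, since every entry of a round-parenthesis matrix is a non-empty \(\mathbb{Z}_{\ge k_{\bullet}}\)), and put \(\check{\alpha}=\check{\alpha}_{i}+\dots+\check{\alpha}_{j}\). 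Then \(\mathfrak{X}\) contains, inside the distinguished copy of \(\widehat{\mathcal{L}}(\mathfrak{sl}_{2}(\Bbbk))\) attached to \(\alpha\) (on which \(p_{\alpha}\) is the identity), the subalgebra \(\mathfrak{a}=e_{\alpha}\otimes t^{p}\Bbbk[t]\oplus e_{-\alpha}\otimes t^{q}\Bbbk[t]\oplus\check{\alpha}\otimes t\Bbbk[t]\), the last summand by conicity. Using \([\check{\alpha}\otimes t^{a},\,e_{\pm\alpha}\otimes t^{b}]=\pm2\,e_{\pm\alpha}\otimes t^{a+b}\) and \([e_{\alpha}\otimes t^{a},\,e_{-\alpha}\otimes t^{b}]=\check{\alpha}\otimes t^{a+b}\) one checks by induction that, for \(m\ge1\), \(\mathfrak{a}^{(m)}=e_{\alpha}\otimes t^{E_{m}}\Bbbk[t]\oplus e_{-\alpha}\otimes t^{F_{m}}\Bbbk[t]\oplus\check{\alpha}\otimes t^{C_{m}}\Bbbk[t]\) with \((E_{1},F_{1},C_{1})=(p+1,q+1,p+q)\) and \((E_{m+1},F_{m+1},C_{m+1})=(C_{m}+E_{m},\,C_{m}+F_{m},\,E_{m}+F_{m})\); all three stay finite, so \(\mathfrak{a}^{(m)}\ne0\) for every \(m\), and \(\mathfrak{a}\) — hence \(p_{\alpha}(\mathfrak{X})\) — is not solvable. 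All four conditions hold, so \(\mathfrak{X}\in\mathfrak{C}\).

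I expect the main obstacle to be this last step — not because it is deep, but because the derived-series computation must be made uniform over all of \(\Delta_{+}\). For a non-simple \(\alpha\) one has to check that \(p_{\alpha}\) carries \(\check{\alpha}\otimes t\Bbbk[t]\) and the root spaces \(\mathfrak{g}_{\pm\alpha}\cap\mathfrak{X}\) into the adjoint copy of \(\widehat{\mathcal{L}}(\mathfrak{sl}_{2}(\Bbbk))\) in \(\mathfrak{g}\), while the remaining root vectors of \(\mathfrak{X}\), lying in non-adjoint isotypic components for the \(\mathfrak{sl}_{2}\)-triple of \(\alpha\), are annihilated and may be ignored; and one must confirm that the recurrences for \(E_{m},F_{m},C_{m}\) really keep all three layers nonempty at each stage, including the degenerate case \(p=0\) or \(q=0\) arising from some off-diagonal \(k_{i,j+1}=0\), where the recurrence starts a step later but still does not terminate. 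The identity \(\mathcal{H}'=0\) is the other place where the shape of these subalgebras — non-negative \(t\)-degrees, and every root space separated from its opposite — is genuinely used.
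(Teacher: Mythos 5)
The paper states this Fact without proof, so there is nothing to compare your argument against; judged on its own merits, it has one genuine gap. Your verification of three of the four conditions is fine: the diagonal hypothesis $k_{0}=\dots=k_{n}=1$ does give $\mathfrak{h}^{\circ}\otimes t\Bbbk\left[t\right]\subseteq\mathfrak{X}$ and $\mathfrak{X}\cap\mathfrak{h}^{\circ}=0$; your identification $\mathcal{H}'=0$ rests on the correct relation $k_{\gamma}+k_{-\gamma}\ge1$ (the parenthetical reason you give, not the ``no negative $t$-power'' preamble); and the non-solvability of $p_{\alpha}$ via the recursion $\left(E_{m+1},F_{m+1},C_{m+1}\right)=\left(C_{m}+E_{m},C_{m}+F_{m},E_{m}+F_{m}\right)$ is sound, since all three entries remain finite integers regardless of the signs of $p$ and $q$.

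The gap is your premise that ``these subalgebras contain no negative power of $t$ outside $\Bbbk c\oplus\Bbbk d$.'' Only the \emph{diagonal} entries are constrained to equal $1$; the off-diagonal $k_{i,j}$ are only forced (by $A_{i,j}+A_{j,i}\subset\Omega_{i,j}=\mathbb{Z}_{\ge1}$) to satisfy $k_{i,j}+k_{j,i}\ge1$, so one of each pair may be arbitrarily negative. The paper's own quasicones are of this kind: the bottom element $\bigwedge\mathcal{C}$ has entries $-1,-2,\dots,-n$, and all eight $A_{4}^{\left(1\right)}$ matrices in Section 5.5 have entries $\le0$. This false premise is exactly what your pre-prosolvability argument stands on: the decomposition $\mathfrak{X}=\mathfrak{X}_{0}\oplus\mathfrak{X}^{+}\oplus\Bbbk c\oplus\Bbbk d$ fails, the truncation maps $\varphi_{N}$ are not even defined on root vectors of negative $t$-degree, and $\varphi_{N}\left(\mathfrak{X}^{+}\right)$ no longer exhausts a nilpotent ideal with solvable quotient. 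Note also that the naive filtration by $\mathfrak{X}\cap\left(\mathfrak{g}\otimes t^{N}\Bbbk\left[t\right]\right)$ is not a filtration by ideals once negative degrees are present, so the repair is not cosmetic. A workable fix is to run, for the whole algebra, the same kind of exponent recursion you already use for $p_{\alpha}$: show that the minimal exponents $k_{\gamma}^{\left(m\right)}$ of the derived series all drift to $+\infty$ (using $k_{\gamma}+k_{-\gamma}\ge1$, which forces the coroot exponents to stay $\ge1$ and the sums $k_{\gamma}^{\left(m\right)}+k_{-\gamma}^{\left(m\right)}$ to grow), so that each quotient $\mathfrak{X}/\mathfrak{X}^{\left(m\right)}$ is solvable and the completion is the inverse limit of these solvable quotients.
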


Denote $\left(\hat{\mathbb{Z}},\oplus,\odot\right)=\left(\mathbb{Z}\cup\left\{ \pm\infty\right\} ,\max,+\right).$
the max-plus semi-ring. The identity ($\odot$-multiplicative neutral
element) in the corresponding max-plus matrix algebra is given by
$\mathbf{I}=\mathbf{I}_{\max}=\mathrm{diag}\left(\mathbf{0}\right)$
(see \cite{SpeyerSturmfels2009} for an introduction to tropical mathematics).
\begin{lem}
\label{lem:tropicalLieClosure}If $B$ is a subset of $\mathfrak{g}$
that contains $\mathfrak{h}^{\circ}\otimes t\mathbb{C}\left[t\right]$,
but $B\cap\mathfrak{h}^{\circ}\otimes\mathbb{C}\left[t^{-1}\right]=\emptyset$,
then its Lie-algebraic closure is given by 
\[
\mathrm{cl}_{\mathfrak{g}}\left(B\right)=B\odot'\left(B\oplus'\mathbf{I}'\right).
\]
\end{lem}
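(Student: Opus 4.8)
The plan is to push everything through the matrix presentation of Definition~\ref{Def: matrix presentation} and then read the Lie-closure operation off inside the max-plus matrix algebra, where the two hypotheses on $B$ turn out to be exactly what is needed to make the closure collapse after a single step. First I would observe that $\mathfrak h^{\circ}\otimes t\Bbbk[t]\subseteq B$ forces every root-exponent set occurring in (the Lie closure of) $B$ to be upward closed: if $e_{\gamma+a\delta}\in B$ and $\langle\gamma,\check\alpha\rangle\ne0$, then bracketing against $\check\alpha\otimes t^{m}\in B$ produces $e_{\gamma+(a+m)\delta}$ for every $m\ge1$, so each set is of the form $\mathbb Z_{\ge k}$ ($k\in\mathbb Z$) together with the degenerate values $\emptyset$ and $\mathbb Z$. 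Under the order-preserving dictionary $\mathbb Z_{\ge k}\leftrightarrow-k$, $\emptyset\leftrightarrow-\infty$, $\mathbb Z\leftrightarrow+\infty$, Minkowski sum of sets becomes $\odot=+$ and union becomes $\oplus=\max$, so $B$ is faithfully recorded by a matrix $\mathbf B\in\hat{\mathbb Z}^{(n+1)\times(n+1)}$, and the identity matrix $\mathrm I$ of the set algebra goes to the max-plus identity $\mathbf I'=\mathrm{diag}(\mathbf 0)$. The second hypothesis $B\cap\mathfrak h^{\circ}\otimes\Bbbk[t^{-1}]=\emptyset$ says precisely that the Cartan sets of $B$ contain only powers $\ge1$; since traversing any oriented circuit $p\to q_{1}\to\dots\to p$ and projecting to the Cartan direction lands in such a set, every circuit of $\mathbf B$ has max-plus weight $\le-1<0$, which is what keeps $\mathrm{cl}_{\mathfrak g}(B)$ a \emph{proper} (conic) subalgebra in the sense of Definition~\ref{def: quasicone loc affine} rather than all of $\mathfrak g$.

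Next I would identify the Lie-closure operation with a fixed-point problem. By the Proposition above, together with the subalgebra relations \eqref{eq:relations subalgebra exponent sets} derived in its proof, a subspace of the shape \eqref{eq:subspace} is a subalgebra if and only if its presentation matrix $\mathcal A$ satisfies $\mathcal A\uplus(\mathcal A\cup\mathrm I)=\mathcal A$, i.e.\ via the translation just set up, $\mathbf A\odot'(\mathbf A\oplus'\mathbf I')=\mathbf A$ in the max-plus matrix algebra, where $\odot',\oplus'$ are the operations inherited entrywise from $(\hat{\mathbb Z},\max,+)$. Hence $\mathrm{cl}_{\mathfrak g}(B)$ is the least matrix $\ge\mathbf B$ fixed by $\Phi(\mathbf X):=\mathbf X\odot'(\mathbf X\oplus'\mathbf I')$, equivalently the limit of the non-decreasing iteration $\mathbf B,\ \Phi(\mathbf B),\ \Phi^{2}(\mathbf B),\dots$; this is well defined because $\Phi$ is inflationary, the diagonal of $\mathbf X\oplus'\mathbf I'$ being $\ge0$ (again from $\mathfrak h^{\circ}\otimes t\Bbbk[t]\subseteq B$), so that $\Phi(\mathbf X)_{pr}\ge\mathbf X_{pr}+(\mathbf X\oplus'\mathbf I')_{rr}\ge\mathbf X_{pr}$. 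The Lemma therefore amounts to the assertion that this iteration stabilizes after one step, $\Phi(\Phi(\mathbf B))=\Phi(\mathbf B)$.

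For the collapse I would write $\mathbf C=\Phi(\mathbf B)=\mathbf B^{\odot'2}\oplus'\mathbf B$ (using $\mathbf B\odot'\mathbf I'=\mathbf B$), so that $\Phi(\mathbf C)=\mathbf C$ unwinds to $\mathbf B^{\odot'3}\oplus'\mathbf B^{\odot'4}\le\mathbf B^{\odot'2}\oplus'\mathbf B$: every max-plus walk of length $3$ or $4$ in the complete digraph with weights $\mathbf B$ is to be dominated by one of length $\le2$ with the same endpoints. Here the type-$A$ combinatorics enter: in $A_{n}^{(1)}$ the roots are the intervals $\alpha_{i}+\dots+\alpha_{j}$, root addition behaves like composition of matrix units $E_{pq}E_{qr}=E_{pr}$, and using the nesting relations $\Omega_{k,l}\subseteq\Omega_{s,r}$ for nested index intervals (established in the proof of the Proposition) together with the triangle-type inequalities that the conic, no-positive-circuit condition of the first step forces on $\mathbf B$, one shows that any walk factors without loss of weight through a single intermediate vertex; since the ``pause at a vertex'' self-loops carry weight $\le0$, inserting them never helps either, so longer products reduce to $\mathbf B^{\odot'2}\oplus'\mathbf B$. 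This gives $\Phi(\mathbf C)=\mathbf C$, hence $\mathrm{cl}_{\mathfrak g}(B)=\mathbf C=\mathbf B\odot'(\mathbf B\oplus'\mathbf I')$.

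The hard part is the last step: keeping the case analysis honest. One must separate the genuinely triangular products (indices $p\le q\le r$, where the nesting relations yield $\mathbf B_{pr}\ge\mathbf B_{pq}\odot\mathbf B_{qr}$ directly) from the mixed walks that ascend and then descend through the index $0$ — governed by the isotropic root $\delta$ and by $-\alpha_{0}=\tfrac1{a_{0}}(\theta-\delta)$ — where two steps are genuinely needed but three never are; and one must carry the off-diagonal Cartan sets $\Omega_{i,j}$ (not only the diagonal ones) and the degenerate entries $-\infty$ and $+\infty$ through the whole argument. The inflationary bound of the second step and the circuit bound of the first step are the two structural inputs that make this bookkeeping terminate.
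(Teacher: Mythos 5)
Your overall route is the same as the paper's: encode the exponent sets as entries of a tropical matrix (the paper works min\nobreakdash-plus with $\mathbb{Z}_{\ge k}\mapsto k$, you work max\nobreakdash-plus with $\mathbb{Z}_{\ge k}\mapsto -k$; this is immaterial), observe that $\mathfrak{h}^{\circ}\otimes t\mathbb{C}\left[t\right]\subseteq B$ makes the sets upward closed, and characterize subalgebras as fixed points of $\Phi\left(\mathbf{X}\right)=\mathbf{X}\odot'\left(\mathbf{X}\oplus'\mathbf{I}'\right)$. But the decisive step --- that the iteration stabilizes after a single application, equivalently that every walk of length $3$ or $4$ in the weighted digraph is dominated by one of length $\le2$ with the same endpoints --- is exactly the content of the lemma, and you do not prove it: you defer it to unspecified ``type-$A$ combinatorics'' and nesting relations. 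It does not follow from the two stated hypotheses. Take $\mathfrak{g}=A_{3}^{\left(1\right)}$ and
\[
B=\mathfrak{h}^{\circ}\otimes t\mathbb{C}\left[t\right]\cup\left\{ e_{\alpha_{1}-\delta},\,e_{\alpha_{2}-\delta},\,e_{\alpha_{3}-\delta}\right\} .
\]
Both hypotheses hold, and $e_{\alpha_{1}+\alpha_{2}+\alpha_{3}-3\delta}\in\mathrm{cl}_{\mathfrak{g}}\left(B\right)$ as a length-$3$ bracket; yet the $\left(\alpha_{1}+\alpha_{2}+\alpha_{3}\right)$-entry of $B\odot'\left(B\oplus'\mathbf{I}'\right)$ is empty, because every length-$\le2$ factorization of $\theta$ passes through the $\alpha_{1}+\alpha_{2}$- or $\alpha_{2}+\alpha_{3}$-entry of $B$, and both are empty. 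So the ``single intermediate vertex'' factorization fails whenever an intermediate root space is missing from $B$, and in general the Kleene star needs on the order of $\log_{2}n$ squarings, not one.

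The two structural inputs you isolate (the inflationary bound from $\mathfrak{h}^{\circ}\otimes t\mathbb{C}\left[t\right]\subseteq B$ and the negative-circuit bound from $B\cap\mathfrak{h}^{\circ}\otimes\mathbb{C}\left[t^{-1}\right]=\emptyset$) constrain only the Cartan/diagonal weights; they say nothing about which off-diagonal entries are finite, which is what the one-step collapse actually requires. A repair must either import the implicit hypothesis under which the lemma is applied in Section \ref{sec:Applying-strategies-on} --- namely that $B$ is a quasicone-type matrix with all off-diagonal entries of the form $\mathbb{Z}_{\ge k}$, $k\in\mathbb{Z}$, already satisfying the triangle inequalities up to a bounded defect, in which case the domination of longer walks can be checked --- or replace the right-hand side by the iterated closure. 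For comparison, the paper's own proof verifies the inequalities \ref{eq:inequalities} directly for $C=B\odot'\left(B\oplus'\mathbf{I}'\right)$, but its first case rests on $b_{i,k}+b_{k,j}\le c_{i,k}+c_{k,j}$, which points the wrong way since $c\le b$ entrywise; so the gap you have left open is the same one the paper glosses over, and it is the genuinely hard part of the statement rather than bookkeeping.
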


\begin{proof}
With the above, we have
\[
c_{i,j}=\left(B\odot'\left(B\oplus'\mathbf{I}'\right)\right)_{i,j}=\min_{k}\left(b_{i,k}+b_{k,j},b_{i,j}+0\right)\,,\;\mbox{ for all }i,j=0,\dots,n,
\]
we need to show formula \ref{eq:relations subalgebra exponent sets},
which is in this case simply
\begin{equation}
c_{i,j}\le c_{i,k}+c_{k,j}\mbox{ for all }k\in\left\{ 0,\dots,n\right\} \smallsetminus\left\{ i,j\right\} ,\:i\ne j.\label{eq:inequalities}
\end{equation}
If $b_{i,j}\le b_{i,k}+b_{k,j}$ for all $k$, then $c_{i,j}\le b_{i,j}+0\le b_{i,k}+b_{k,j}\le c_{i,k}+c_{k,j}$.
If otherwise $b_{i,j}>b_{i,k}+b_{k,j}$ for some $k$, then $c_{i,j}\le b_{i,k}+b_{k,j}$
for that $k$. Now $b_{i,k}+b_{k,j}\le\min_{k}\left(b_{i,k}+b_{k,j},b_{i,j}+0\right)=c_{i,j}\le c_{i,k}+c_{k,j}$,
as desired. 
\end{proof}
\begin{cor}
The subalgebras containing $\mathfrak{h}^{\circ}\otimes\mathbb{C}\left[t\right]$
are exactly the idempotents in the tropical matrix rings $\left(C,\odot'\right)$
and $\left(C,\overleftrightarrow{\odot}_{\min}\right)$.
\end{cor}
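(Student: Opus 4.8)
The plan is to transport the whole question through the matrix dictionary of this subsection, turning ``is a Lie subalgebra'' into ``is a tropical idempotent''. The starting point is the proposition characterising presentation matrices: a $\mathfrak{g}$-subspace $\mathfrak{X}$ as in \ref{eq:subspace} is a subalgebra precisely when its presentation matrix $\mathcal{A}$ satisfies $\mathcal{A}\uplus\left(\mathcal{A}\cup\mathrm{I}\right)=\mathcal{A}$, equivalently --- since the relations \ref{eq:relations subalgebra exponent sets} are exactly those encoded by one step of the Lie-algebraic closure --- when $\mathcal{A}$ is a fixed point of the closure operator $\mathcal{A}\mapsto\mathrm{cl}_{\mathfrak{g}}(\mathcal{A})$. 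For the subalgebras in question this operator is computed by Lemma \ref{lem:tropicalLieClosure}: because $\mathfrak{X}\supseteq\mathfrak{h}^{\circ}\otimes t\mathbb{C}\left[t\right]$, and, being of this conic form, meets the Cartan loop only in positive powers of $t$, the hypotheses of the lemma hold and $\mathrm{cl}_{\mathfrak{g}}(\mathcal{A})=\mathcal{A}\odot'\left(\mathcal{A}\oplus'\mathbf{I}'\right)$, where $\odot',\oplus',\mathbf{I}'$ are the min-plus matrix operations and identity obtained from $\mathbb{Z}_{\ge a}\cup\mathbb{Z}_{\ge b}=\mathbb{Z}_{\ge\min(a,b)}$, $\mathbb{Z}_{\ge a}+\mathbb{Z}_{\ge b}=\mathbb{Z}_{\ge a+b}$ and $\mathrm{I}\mapsto\mathbf{I}'$.

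Thus ``$\mathfrak{X}$ is a subalgebra'' is equivalent to the tropical equation $\mathcal{A}\odot'\left(\mathcal{A}\oplus'\mathbf{I}'\right)=\mathcal{A}$, and I would then recognise this as an idempotence statement. Expanding by distributivity and using $\mathcal{A}\odot'\mathbf{I}'=\mathcal{A}$ gives $\mathcal{A}\odot'\left(\mathcal{A}\oplus'\mathbf{I}'\right)=\left(\mathcal{A}\odot'\mathcal{A}\right)\oplus'\mathcal{A}$, so the equation says that $\mathcal{A}\oplus'\mathbf{I}'$ is transitively and reflexively closed, i.e.\ $\left(\mathcal{A}\oplus'\mathbf{I}'\right)\odot'\left(\mathcal{A}\oplus'\mathbf{I}'\right)=\mathcal{A}\oplus'\mathbf{I}'$; on $C$, where the Heisenberg diagonal is normalised by the condition $\mathfrak{h}^{\circ}\otimes\mathbb{C}\left[t\right]\subseteq\mathfrak{X}$, the summand $\mathbf{I}'$ is absorbed and this is the same as $\mathcal{A}\odot'\mathcal{A}=\mathcal{A}$, i.e.\ $\mathcal{A}$ is an idempotent of $\left(C,\odot'\right)$. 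Finally, the automatic symmetry $\Omega_{i,j}=\Omega_{j,i}$ of \ref{eq:omega for Heisenberg} --- preserved by the bracket since $[\mathfrak{g}_{\beta},\mathfrak{g}_{-\beta}]$ and $[\mathfrak{g}_{-\beta},\mathfrak{g}_{\beta}]$ span the same line --- shows that $C$ is stable under the $\sigma$-symmetrised product $\overleftrightarrow{\odot}_{\min}$ and that $\overleftrightarrow{\odot}_{\min}$ agrees with $\odot'$ on $C$; hence the two matrix rings have the same idempotents, and the statement follows.

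I expect the middle step --- passing from the fixed-point equation $\mathcal{A}\odot'\left(\mathcal{A}\oplus'\mathbf{I}'\right)=\mathcal{A}$ to the bare idempotence $\mathcal{A}\odot'\mathcal{A}=\mathcal{A}$ --- to be the main obstacle, since it is exactly here that one must exploit how ``$\mathfrak{X}\supseteq\mathfrak{h}^{\circ}\otimes\mathbb{C}\left[t\right]$'' pins down the diagonal entries $A_{i}=\Omega_{i-1,i}\cup\Omega_{i,i+1}$ together with the nested-intersection relations $\Omega_{i,j}\cap\Omega_{j,k}=\Omega_{i,k}$ of Definition \ref{Def: matrix presentation} and $A_{i,j}+A_{j,i}\subset\Omega_{i,j}$, so as to force $\left(\mathcal{A}\odot'\mathcal{A}\right)_{i,j}\le a_{i,j}$. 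I would carry this out entry by entry, separating the diagonal (Heisenberg) entries from the off-diagonal (real-root) entries and using $A_{i,j}+A_{j,k}\subset A_{i,k}$ for the latter; checking that $\overleftrightarrow{\odot}_{\min}$ neither enlarges nor shrinks the set of idempotents is then a comparatively routine consequence of the built-in symmetry of the $\Omega$'s.
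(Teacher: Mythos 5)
Your argument is correct and is essentially the intended one: the paper states this corollary without proof as a direct consequence of Lemma \ref{lem:tropicalLieClosure}, and your key step --- that containment of $\mathfrak{h}^{\circ}\otimes\mathbb{C}\left[t\right]$ pins the diagonal entries at $0$, so that $\mathcal{A}\oplus'\mathbf{I}'=\mathcal{A}$ and the closure fixed-point equation $\mathcal{A}\odot'\left(\mathcal{A}\oplus'\mathbf{I}'\right)=\mathcal{A}$ collapses to the bare idempotence $\mathcal{A}\odot'\mathcal{A}=\mathcal{A}$ --- is exactly the absorption the corollary relies on, and it is easier than you feared at the end of your write-up. One small correction: a subalgebra containing $\mathfrak{h}^{\circ}\otimes\mathbb{C}\left[t\right]$ contains $\mathfrak{h}^{\circ}\otimes1$ and therefore does meet $\mathfrak{h}^{\circ}\otimes\mathbb{C}\left[t^{-1}\right]$ as the lemma's hypothesis is literally written (it presumably should read $t^{-1}\mathbb{C}\left[t^{-1}\right]$), so your phrase ``meets the Cartan loop only in positive powers of $t$'' should be adjusted to ``only in non-negative powers''.
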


\begin{rem}
(ii) Fix a basis $\Pi^{\circ}=\left\{ \varphi_{1},\dots,\varphi_{n}\right\} $
and denote $c_{2^{i}}=c_{\varphi_{i}}\:\left(i=1,\dots,n\right)$.
Denote by $I_{n}=\left\{ 2^{i}+\cdots+2^{j}\mid0\le i\le j\le n\right\} $
the set of admissible indices for quasicone matrices for $A_{n}^{\left(1\right)}$.
The property of $I_{n}$ to be a linearly ordered set of order $q$
is required in the main algorithm at the end of the paper.
\end{rem}

\subsection{Defect of a quasicone}
\begin{defn}
Define the defect function $\#:\mathfrak{C}\to\mathbb{N}$ by 
\begin{equation}
\#C=\sum_{\varphi\in\Delta_{+}^{\circ}}\left(c_{\varphi}+c_{-\varphi}-2\right)_{+}\,.\label{eq:Defect}
\end{equation}
It aims to measure how much a quasicone fails to be a cone, and therefore
the corresponding subalgebra fails to be a maximal parabolic.
\end{defn}

\begin{rem}
A subalgebra $C$ may only fail to be a quasicone if $\mathfrak{h}^{\circ}\otimes t\mathbb{C}\left[t\right]$
is not entirely contained or there exists a root $\varphi\in\Delta^{\circ}$
such that either 
\begin{align*}
 & \max_{k\in\mathbb{Z}}\left\{ e_{\varphi+k\delta}\notin C\right\} \mbox{ does not exist, or }\min_{k\in\mathbb{Z}}\left\{ e_{\varphi+k\delta}\in C\right\} \mbox{ does not exist.}
\end{align*}
\end{rem}

If $\Pi^{\circ}=\left\{ \alpha_{1},\dots,\alpha_{n}\right\} $, then
a change of basis $\Pi\mapsto\Pi'$ of $\Delta$ is accomplished by
choosing linearly independent roots $\tilde{\alpha}_{1},\dots,\tilde{\alpha}_{n}\in\mathrm{add}_{\Delta}\Pi^{\circ}$
and extending it to $\Pi'$ canonically. Then $\left(\Pi'\right)^{\circ}=\left\{ \tilde{\alpha}_{1},\dots,\tilde{\alpha}_{n}\right\} $. 
\begin{defn}
\label{def:normal quasicone}A quasicone matrix, respectively a quasicone
subalgebra, is given in \emph{normal form} or \emph{normal} if $c_{\varphi}=1$
for all $\varphi\in\Pi^{\circ}$ and $c_{\kappa}+c_{-\kappa}\ge c_{\nu}+c_{-\nu}$
for all $\kappa,\nu\in I_{n}$ with $\kappa<\nu$. For the rest of
the thesis we will generically refer to a quasicone matrix, a quasicone
subalgebra or a quasicone of roots by \emph{quasicone\index{quasicone}}
if the structure is clear from the context.
\end{defn}

\begin{lem}
\label{lem:base_change} Any quasicone $C$ is equivalent to a normal
quasicone, i.e. there is an automorphism $\varphi\in\mathrm{Aut}\left(\mathfrak{g}\right)$
that induces a change of basis and thereby a map of quasicones $\varphi:C\mapsto C'$
with $C_{\kappa,\kappa+1}=1$ \textup{and $c_{\kappa}+c_{-\kappa}\ge c_{\nu}+c_{-\nu}$
for all $\kappa,\nu\in I_{n}$ with $\kappa<\nu$.}
\end{lem}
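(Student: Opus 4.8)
The plan is to normalize in two stages, using two different kinds of automorphisms that act on a quasicone almost independently: translations $t_\mu$ (as in \eqref{eq:translation}) to force $c_\varphi=1$ on the simple roots, and a base change (a finite Weyl group element together with a diagram automorphism of the $\tilde A_n$-cycle) to sort the defect sums $c_\kappa+c_{-\kappa}$. The reason these stages decouple is that the quantities $c_\kappa+c_{-\kappa}$ are invariant under translations: if $\mu$ lies in the coweight lattice, the automorphism $t_\mu$ acts on a real-root vector $e_{\beta+k\delta}$ ($\beta\in\Delta^\circ$) by shifting the degree, $k\mapsto k+\langle\beta,\mu\rangle$, and fixes each $\mathfrak h^\circ\otimes t^m$ pointwise; hence it sends a conic quasicone to a conic quasicone, replaces $c_\beta$ by $c_\beta+\langle\beta,\mu\rangle$, and therefore leaves $c_\beta+c_{-\beta}$ (and so the defect $\#C$) unchanged.

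\textbf{Stage one.} Since $\beta\mapsto\langle\beta,\cdot\rangle$ identifies the coweight lattice with $\mathrm{Hom}(Q,\mathbb Z)$ and $\Pi^\circ$ is a $\mathbb Z$-basis of the finite root lattice $Q$, there is a unique $\mu$ with $\langle\alpha_i,\mu\rangle=1-c_{\alpha_i}$ for every $i$ (the $c_{\alpha_i}$ are finite because $C$ is a quasicone, cf.\ the remark preceding Definition \ref{def:normal quasicone}). Applying $t_\mu$ makes $c_{\alpha_i}=1$ for all $\alpha_i\in\Pi^\circ$, i.e.\ puts every superdiagonal entry of the presentation matrix equal to $\mathbb Z_{+}$, while the trivial intersection with $\mathfrak h$, the containment $\mathfrak h^\circ\otimes t\Bbbk[t]\subset C$, and the non-solvability of the $p_\alpha$-projections all survive because they are preserved by any automorphism.

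\textbf{Stage two.} Now apply a base-change automorphism of the kind described before Definition \ref{def:normal quasicone}: it permutes $\Pi^\circ$ and hence the positive roots, relabelling the presentation matrix and the index set $I_n$ but preserving the multiset of values $c_\kappa+c_{-\kappa}$. We must exhibit a base in which this multiset, read along the linear order of $I_n$, is non-increasing. This is where the quasicone arithmetic enters: relations \eqref{eq:relations subalgebra exponent sets} — equivalently, the tropical idempotency $\mathcal A\uplus(\mathcal A\cup\mathrm I)=\mathcal A$ and Lemma \ref{lem:tropicalLieClosure} — force the defect function to be monotone along chains of nested intervals on the Dynkin cycle, so the partial order it induces on $I_n$ is a coarsening of a linear order that can be realized by a dihedral symmetry of the $\tilde A_n$-diagram together with a finite Weyl relabelling. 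Applying that automorphism produces the desired ordering. Finally, because the base change may have perturbed the superdiagonal entries, re-apply a translation as in Stage one; since translations do not change any $c_\kappa+c_{-\kappa}$ nor the order in which these occur along $I_n$, this restores $c_\varphi=1$ on $\Pi^\circ$ without destroying the sorting, and $C'$ is in normal form.

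\textbf{Main obstacle.} Stages one and the final re-translation are routine; the hard point is the claim in Stage two that the very small group of automorphisms that induce a base change without translating — finite Weyl plus diagram automorphisms of the affine diagram — already suffices to realize the needed reordering of $I_n$. A priori this group is far too small to produce an arbitrary permutation of the $\binom{n+2}{2}$ indices, so the argument must use the structural constraints \eqref{eq:relations subalgebra exponent sets} in an essential way to show that the defect data of a genuine quasicone is never more scrambled than these symmetries can repair. One secondary point deserves care: a base change need not preserve the defect of every root (only translations do), so along the way one should confirm that the candidate sorted configuration is still a quasicone, i.e.\ that \eqref{eq:relations subalgebra exponent sets} and the tropical idempotency continue to hold — which is automatic, since these are intrinsic to being a Lie subalgebra and are transported by any automorphism.
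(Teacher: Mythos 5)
Your two-stage architecture is the same as the paper's, only in the opposite order: the paper first sorts the sums $c_{\kappa}+c_{-\kappa}$ by a finite Weyl element $w$ and then applies an explicit degree-shift $\tau$ (given by $c_{\kappa}\mapsto c_{\kappa}+\sum_{i=k_{1}}^{k_{2}}(1-c_{2^{i}})$ for $\kappa=\sum_{i=k_{1}}^{k_{2}}2^{i}$) to put ones on the superdiagonal, whereas you translate first and sort second. Your observation that the two operations commute because $c_{\kappa}+c_{-\kappa}$ is invariant under translations is correct and is exactly why the paper's $\tau$ does not disturb the ordering achieved by $w$; your Stage one is essentially identical to the paper's $\tau$.

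The genuine gap is Stage two, and you have flagged it yourself without closing it. The assertion that relations \ref{eq:relations subalgebra exponent sets} "force the defect function to be monotone along chains of nested intervals" so that the required ordering "can be realized by a dihedral symmetry of the $\tilde A_{n}$-diagram together with a finite Weyl relabelling" is not derived from anything; no inequality among the sums $c_{\kappa}+c_{-\kappa}$ is actually extracted from the subalgebra relations, and no argument is given that the group you allow yourself (finite Weyl plus diagram automorphisms, acting on the $\binom{n+1}{2}$ root pairs as $S_{n+1}$ acts on $2$-subsets of $\{1,\dots,n+1\}$ — far from the full symmetric group once $n\ge 3$) can repair whatever orderings actually occur. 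This sorting claim is the entire content of the lemma beyond the routine translation step. The paper, for what it is worth, does commit to a concrete (if terse) mechanism here: it argues that $\mathcal{W}^{\circ}/\langle w_{0}\rangle$ acts faithfully on unordered pairs $\{\Pi,w_{0}\Pi\}$ of bases, hence permutes $\Delta_{+}^{\circ}(\Pi)\equiv I_{n}$, and therefore contains an element realizing the desired decreasing order of $(c_{\kappa}+c_{-\kappa})_{\kappa\in I_{n}}$. Whether or not one finds that argument fully convincing, your proposal substitutes for it only the admission in your "Main obstacle" paragraph that such an argument is needed; as written, Stage two would not survive refereeing.
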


\begin{proof}
Because any quasicone $C\subset\mathfrak{g}$ is a subalgebra of $\mathfrak{g}$,
any automorphism of $\mathfrak{g}$ induces an isomorphism of quasicones.
The Weyl group $\mathcal{W}^{\circ}$ acts transitively and faithfully
on the set of bases $\mathcal{B}$ for the root system $\Delta^{\circ}$.
First, we show that there is a $w\in\mathcal{W}^{\circ}$ such that
$w\left(C\right)$ satisfies $c_{\kappa}+c_{-\kappa}\ge c_{\nu}+c_{-\nu}$
for all $\kappa,\nu\in I_{n}$ with $\kappa<\nu$. Select $w_{0}=\max_{\preceq}\left(\mathcal{W}^{\circ}\right)$
with respect to the Bruhat order $\preceq$ on $\mathcal{W}^{\circ}$.
The order of $\left|\mathcal{W}^{\circ}\left(\mathfrak{g}\right)/\left\langle w_{0}\right\rangle \right|=q!$.
Since $\mathcal{W}^{\circ}\left(\mathfrak{g}\right)/\left\langle w_{0}\right\rangle $
acts faithfully on the set
\[
\left\{ \left\{ \Pi,w_{0}\Pi\right\} \mid\Pi\in\mathcal{B}\right\} \subset\mathcal{B}\times\mathcal{B},
\]

which is of order $q!$, it acts by permutation on $\Delta_{+}^{\circ}\left(\Pi\right)$.
This induces a canonical action on the ordered set $\left(c_{\kappa}+c_{-\kappa}\mid\kappa\in I_{n}\right)$,
because $I_{n}\equiv\Delta_{+}^{\circ}\left(\Pi\right)$. Eventually
$\mathcal{W}^{\circ}\left(\mathfrak{g}\right)/\left\langle w_{0}\right\rangle $
contains an element $w$ such that $\left(c_{\kappa}+c_{-\kappa}\mid\kappa\in w\left(I_{n}\right)\right)$
has the desired order.

Now we show the existence of an isomorphism $\tau$ of quasicones
that yields only 'ones' on the superdiagonal in $\tau\left(C\right)$.
Recall that $C=\left\langle \left\{ e_{\alpha_{\kappa}+c_{\kappa}\delta}\mid\kappa\in w\left(I_{n}\right)\right\} \cup\mathfrak{h}^{\circ}\otimes t\mathbb{C}\left[t\right]\right\rangle $.
The components for the desired map $\tau:\left(c_{\kappa}\mid\kappa\in w\left(I_{n}\right)\right)\mapsto\left(c_{\kappa}'\mid\kappa\in w\left(I_{n}\right)\right)$
are \emph{ad hoc} given by
\[
\left(\tau\mid_{\kappa}\right)\left(c_{\kappa}\right)=c_{\kappa}+\sum_{i=k_{1}}^{k_{2}}\left(1-c_{2^{i}}\right)\mbox{ if }\kappa=\sum_{i=k_{1}}^{k_{2}}2^{i}\in w\left(I_{n}\right),\,\left(0\le k_{1}\le k_{2}<n\right)\,.
\]
\end{proof}

\subsection{Order relations on quasicones}
\begin{defn}
\label{Partial order on C}Let's define three partial orders on $\mathcal{C}$
by 

(i) $C\le^{\mathrm{\left(i\right)}}C'$ if $C_{\nu}=C'_{\nu}$ for
all $\nu\in I_{n}$ and $C_{\kappa}<C'_{\kappa}$ for some $\kappa\in-I_{n}$
or

(ii) $C\le^{\mathrm{\left(ii\right)}}C'$ if $C_{\kappa}<C'_{\kappa}$
for some $\kappa\in-I_{n}$ and $C_{\nu}+C_{-\nu}=C_{\nu}'+C{}_{-\nu}'$
for all $\nu\in I_{n}$. 

(iii) $\le=\le^{\mathrm{\left(i\right)}}\cup\le^{\mathrm{\left(ii\right)}}$,
i.e. $C\le C'$ if $C\le^{\mathrm{\left(i\right)}}C'$ or $C\le^{\mathrm{\left(ii\right)}}C'$.
\end{defn}

\begin{rem}
(i) The set of representatives of $\mathcal{C}$ in normal form is
equipped with the inclusion partial order forms a complete join-semilattice
$\left(\mathcal{C},\subseteq,\bigvee^{\subseteq}\right)$ of subalgebras
with infimum $\left\{ 0\right\} $ and the greatest upper bound $\mathfrak{g}$
itself. Thus, its order dual is a complete semilattice $\left(\mathcal{C},\supseteq,\bigwedge^{\supseteq}\right)$
(cf. \cite{Nation1984}).

(ii) Since the union $\le^{\mathrm{\left(i\right)}}\cup\le^{\mathrm{\left(ii\right)}}$
is disjoint, there is a split exact sequence of posets 
\begin{equation}
0\rightarrow\left(\mathcal{C},\le^{\mathrm{\left(ii\right)}}\right)\rightarrow\left(\mathcal{C},\le\right)\rightarrow\left(\mathcal{C},\le^{\mathrm{\left(ii\right)}}\right)\rightarrow0\,.\label{eq:exact_sequence_posets}
\end{equation}
Recall that the sequence $\left(C_{\nu}+C_{-\nu}\mid\nu\in I_{n}\right)$
is monotonically decreasing. Consider the set of monotonically decreasing
positive integer sequences 
\[
\mathbb{N}_{\ge}^{q}=\left(k_{\nu}\mid k_{\nu}\ge k_{\nu'}\mbox{ if }\nu\le\nu',\,\nu\in I_{n}\right)\subset\mathbb{N}^{q}\,,
\]
equipped with the natural partial order, which is equivalent to the
lexical total order thereon. Define the map $\gamma:\mathbb{N}_{\ge}^{q}\to\left(\mathcal{C},\le^{\mathrm{\left(i\right)}}\right)$
by 
\[
a\mapsto\gamma\left(a\right)=\bigwedge\left\{ C\in\mathcal{C}\mid\left(C_{\nu}+C_{-\nu}\right)_{\nu\in I_{n}}=a\right\} .
\]
In fact, $\gamma$ is well-defined. It is injective since two quasicones
with a different defect for any of its sub-quasicones cannot be equal.
Denote the vector $\gamma^{-1}\left(C\right)=\left(C_{\nu}+C_{-\nu}\mid\nu\in I_{n}\right)$
by gap of $C$,
\begin{equation}
\mbox{gap}\left(C\right)=\gamma^{-1}\left(C\right)\,.\label{eq:gap}
\end{equation}

(iii) The gap of $C$ is closely related to the defect function, precisely
$\#C=\sum_{\nu\in I_{n}}\left(\mbox{gap}\left(C\right)_{\nu}-2\right)_{+}$.
\end{rem}

(iv) The non-trivial representative that is the greatest lower bound
in $\mathcal{C}$ is the cone associated to the matrix 
\[
\bigwedge\mathcal{C}\sim\left(\begin{array}{ccccccc}
1 & 1 & 2 & 3 &  & \cdots & n\\
-1 & 1 & 1 & 2 &  & \cdots\\
-2 & -1 & 1 & 1\\
-3 & -2 & -1 & \ddots & \ddots &  & \vdots\\
\vdots &  &  & \ddots &  & 1 & 2\\
 &  &  &  &  & 1 & 1\\
-n &  &  & \cdots &  & -1 & 1
\end{array}\right)=\gamma\left(0^{\times q}\right).
\]
 Thus $\left(\mathcal{C},\le\right)$ is a complete semilattice. 
\begin{defn}
Select arbitrary elements $C^{\mathrm{up}},C^{\mathrm{low}}\in\mathcal{C}$.
A quotient (complete) sublattice $C^{\mathrm{up}}/C^{\mathrm{low}}\subset\mathcal{C}$
is defined as 
\[
C^{\mathrm{up}}/C^{\mathrm{low}}=\left\{ C\in\mathcal{C}\mid C^{\mathrm{low}}\le C\le C^{\mathrm{up}}\right\} .
\]
Consequently, every subset $C^{\mathrm{up}}/C^{\mathrm{low}}\subset\mathcal{C}$
is a (complete) lattice. Now, consider the following quasicones 
\begin{equation}
\tilde{C}\sim\left(\begin{array}{ccccccc}
1 & 1 & 2 & 3 &  & \cdots & n\\
\tilde{c_{1,0}} & 1 & 1 & 2 &  & \cdots & n-1\\
\tilde{c_{2,0}} & \tilde{c_{2,1}} & 1 & 1\\
\tilde{c_{3,0}} & \tilde{c_{3,1}} & \tilde{c}_{3,2} & 1 & \ddots &  & \vdots\\
\vdots &  &  &  & \ddots &  & 2\\
 &  &  &  &  & 1 & 1\\
\tilde{c}_{n,0} &  &  & \cdots &  & \tilde{c}_{n,n-1} & 1
\end{array}\right).\label{eq:up-cone}
\end{equation}
These are a lower bound with respect to $\le^{\mathrm{\left(ii\right)}}$,
because there are no normal quasicones $C$ such that $C_{\nu}>\left(\tilde{C}\right)_{\nu}$
can be true for any $\nu>0$. To define the matrices $\tilde{C}_{\tilde{d}}^{\mathrm{up}}=\gamma\left(\left(\tilde{d}+1\right)^{\times q}\right)$,
$\tilde{d}\in\mathbb{Z}_{\ge-1}$, set
\begin{align*}
\tilde{c}_{-1}=\tilde{c}_{-2}=\cdots=\tilde{c}_{-2^{n-1}} & =:\tilde{d}\\
\tilde{c}_{-3}=\tilde{c}_{-6}=\cdots=\tilde{c}_{-\left(2^{n-2}+2^{n-1}\right)} & =\tilde{d}-1\\
 & \vdots\\
\tilde{c}_{-\left(2^{n}-1\right)} & =\tilde{d}-n+1
\end{align*}
with the general rule $\tilde{c}_{\nu}=\tilde{d}-\ell\left(\nu\right)+1$
for all $\nu<0$. Then $\tilde{c}_{\nu}+\tilde{c}_{-\nu}=\tilde{d}+1$
for all $\nu$ and all inequalities \ref{eq:inequalities} are satisfied,
so that this really represents a quasicone. Note that $\tilde{C}_{-1}^{\mathrm{up}}$
is the upper bound of the lattice.
\end{defn}

\subsection{Affine Weyl group actions and direct sums}

The group of translations $\mathcal{T}$ are the $\mathbb{Z}$-modules
generated by rank two block matrices $t_{i}$ that act via common
addition on $\mathfrak{C}$ (tropical Hadamard $\odot$-product).
The Weyl group $\mathcal{W}^{\circ}$ for the simple root system $\Delta^{\circ}$
is isomorphic to $S_{n}$, thus generated by transpositions which
we will identify with the rank one matrices $s_{i,j}$: 
\[
t_{i}=\left(\begin{array}{cccccc}
0 & \cdots & 0 & 1 & \cdots & 1\\
\vdots & \ddots &  & \vdots &  & \vdots\\
 &  & 0 & 1 & \cdots & 1\\
-1 & \cdots & -1 & 0 &  & \vdots\\
\vdots &  & \vdots &  & \ddots\\
-1 & \cdots & -1 &  &  & 0
\end{array}\right),\;i=1,\dots,n,\qquad s_{i,j}=\left(\begin{array}{cccccc}
0 & \cdots &  &  & \cdots & 0\\
\vdots & \ddots &  &  &  & \vdots\\
 &  & 1 & -1\\
 &  & -1 & 1\\
\vdots &  &  &  & \ddots & \vdots\\
0 & \cdots &  &  & \cdots & 0
\end{array}\right),
\]
$i,j=0,\dots,n,\:i\ne j,$ where the $s_{i,j}$ with $j=i+1$ form
a minimal generating set. This group acts on $\mathfrak{C}$ via row-column
permutations $s_{ij}Cs_{ij}$. Thus the affine Weyl group $\mathcal{W}=\mathcal{W}^{\circ}\ltimes\mathcal{T}$
acts via $s_{ij}\left(C+t_{i}\right)s_{ij}.$

Because of matrix multiplication and the fact that the empty set $\emptyset$
serves as $\cup$-additive neutral element, we can build new pre-prosolvable
presentation matrices, by taking direct sums. 

For $A\in\mathcal{P}\left(\mathbb{Z}\right)^{n\times n}$ and $B\in\mathcal{P}\left(\mathbb{Z}\right)^{k\times k}$,
define

\[
A\boxplus B=\left\{ \begin{array}{cc}
A & \emptyset^{n\times k}\\
\emptyset^{k\times n} & B
\end{array}\right\} \text{ and }A\boxbslash B=\left\{ \begin{array}{cc}
A & \emptyset^{n\times k}\\
\mathbb{Z}^{k\times n} & B
\end{array}\right\} .
\]

Let $\alpha_{1},\dots,\alpha_{k}\in\Pi^{\circ}$. Then denote $\mathfrak{n}_{\pm\left\{ \alpha_{1},\dots,\alpha_{k}\right\} }^{\square}:=\sum_{i=1}^{k}\mathfrak{g}_{\mathcal{W}\left(\Pi^{\circ}\smallsetminus\alpha_{i}\right)\left\{ \pm\alpha_{i}\right\} +\mathbb{Z}\delta}$.
The basic example would be 
\[
\mathfrak{n}_{\left\{ -\alpha_{k}\right\} }^{\square}=\emptyset^{k\times k}\boxbslash\emptyset^{\left(n-k\right)\times\left(n-k\right)}=\left\{ \begin{array}{cc}
\emptyset^{k\times k} & \emptyset^{n\times k}\\
\mathbb{Z}^{k\times n} & \emptyset^{\left(n-k\right)\times\left(n-k\right)}
\end{array}\right\} =\left(\begin{array}{cc}
\infty^{k\times k} & \infty^{n\times k}\\
-\infty^{k\times n} & \infty^{\left(n-k\right)\times\left(n-k\right)}
\end{array}\right).
\]

For a principal parabolic system $P=\Delta_{+}\dot{\cup}\Delta{}_{0}$,
consider the corresponding ideal in the Lie algebra, i.e. $\mathfrak{g}_{+}=\mathfrak{g}_{\Delta_{+}}$.
Thanks to the matrix presentation \ref{Def: matrix presentation},
it is possible to describe those ideals by means of a block decomposition,
each block representing a different type of ideal for a parabolic
in a subalgebra $A_{k}^{\left(1\right)}\subset A_{n}^{\left(1\right)}$.

\section{Futorny's Support Conjecture for $A_{n}^{\left(1\right)}$ }

\subsection{\label{sec:Applying-strategies-on}Tropical Lie Actions on annihilating
quasicones}

From now on, $V$ is always an irreducible non-dense weight $A_{n}^{\left(1\right)}$-module.
For all $n\ge2$, we can reduce the problem of finding primitive elements
when only finitely many quasicone subalgebras do not act trivially.
We will give an explicite upper bound in the quasicone lattice $\mathcal{C}$
for those cases to occur. 

By Definition \ref{def: quasicone loc affine} a subalgebra $\mathfrak{s}\subset\mathfrak{g}$
is a quasicone subalgebra if it contains $\mathcal{H}_{+}$ and has
a trivial intersection with $\mathfrak{h}$. Therefore, any quasicone
subalgebra of $\mathfrak{g}\left(S\right)$, for a partition $S\subset\Pi^{\circ}$
of a basis of $\Delta^{\circ}$, is equal to 
\[
C_{S}\left(\mathbf{k}\right)=\left(\sum_{\alpha\in\Delta^{\circ}\left(S\right)}\sum_{n_{\alpha}\ge k_{\alpha}}\mathfrak{g}_{\alpha+n_{\alpha}\delta}\right)\oplus\mathcal{H}_{+}\left(S\right)
\]
for integers $\mathbf{k}=\left(k_{\alpha}\mid\alpha\in\Delta^{\circ}\left(S\right)\right)$
with the property $k_{\alpha}+k_{-\alpha}\ge1$ and $k_{\alpha}+k_{\beta}\ge k_{\alpha+\beta}$
whenever $\alpha,\beta$ and $\alpha+\beta$ are roots. 

It is obvious that the integer vector $\mathbf{k}$ defines the quasicone
subalgebra $C_{S}\left(\mathbf{k}\right)$ uniquely up to isomorphism.
Denote the sets of quasicone subalgebras by $\mathfrak{C}=\mathfrak{C}_{\Pi^{\circ}}$
and $\mathfrak{C}_{S}$, respectively. The name quasicone subalgebra
is justified, since in general $\mathfrak{g}_{0}\cap C_{S}\left(\mathbf{k}\right)=\emptyset$
and the index set, where $\mathcal{U}\left(\mathfrak{h}+C_{S}\left(\mathbf{k}\right)\right)$
is supported, is equal to the intersection of the root lattice with
a cone. In other words, a quasicone subalgebra is a subalgebra over
a blunt cone of roots.
\begin{defn}
\label{def:semi-primitive}Let $\mathfrak{a}\subset\mathfrak{g}$
be a subalgebra and $V$ a weight $\mathfrak{g}$-module. \\
(i) A vector $v\in V$ is called $\mathfrak{a}$\emph{-semiprimitive}\index{semiprimitive, mathfrak{a}-semiprimitive@semiprimitive, $\mathfrak{a}$-semiprimitive}
if $\mathfrak{a}v=0$. \\
(ii) $C_{S}\left(\mathbf{k}\right)$ is a \emph{GVM-complete quasicone
subalgebra\index{GVM-complete quasicone subalgebra} }or just\emph{
complete }\index{complete, rightarrowGMV-complete@complete, $\rightarrow$GMV-complete}\emph{,}
if $k_{\alpha}+k_{-\alpha}\in\left\{ 1,2\right\} $ for all $\alpha\in\mathrm{add}_{\Delta^{\circ}}\left(S\right)$.\\
(iii) If $\mathfrak{a}=C_{\Pi^{\circ}}$ is complete or $\mathfrak{a}=\mathfrak{n}_{\Pi^{\circ}}$
or $\mathfrak{a}=C_{X}\oplus\mathfrak{n}_{X^{c}}$ for some $\left\{ 0\right\} \ne X\subset\Pi^{\circ}$
and $C_{X}$ is complete and $v\in V$ is $\mathfrak{a}$\emph{-semiprimitive},
then $v$ is called \index{primitive, mathfrak{a}-primitive@primitive, $\mathfrak{a}$-primitive}$\mathfrak{a}$-\emph{primitive}.
\end{defn}

\begin{fact}
Fact. If $v$ is $\mathfrak{a}$-primitive then $v$ is primitive.
\end{fact}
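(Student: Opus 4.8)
The plan is to unwind both definitions. A vector $v$ is \emph{primitive} precisely when there is a parabolic subalgebra $\mathfrak{p}(P)=\mathcal{L}\oplus\mathcal{N}$ with $\mathcal{N}v=0$, and by Theorem~\ref{thm:parabolic induction} every parabolic system $P$ yields such a decomposition with $\mathcal{N}=\mathfrak{g}_{P\smallsetminus\sigma(P)}$. Hence, for an $\mathfrak{a}$-semiprimitive $v$ (so $\mathfrak{a}v=0$), it is enough to exhibit a parabolic system $P$ with $\mathfrak{g}_{P\smallsetminus\sigma(P)}\subseteq\mathfrak{a}$, since then $\mathcal{N}v\subseteq\mathfrak{a}v=0$. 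I would argue case by case along the three shapes of $\mathfrak{a}$ in Definition~\ref{def:semi-primitive}(iii).

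If $\mathfrak{a}=\mathfrak{n}_{\Pi^{\circ}}$ there is nothing to do: $\mathfrak{n}_{\Pi^{\circ}}=\mathfrak{n}_{+}$ is, by construction, the nilradical of the natural Borel subalgebra $\mathfrak{b}=(\mathfrak{h}+\mathcal{H})\oplus\mathfrak{n}_{+}$, which is a parabolic subalgebra; take $\mathfrak{p}=\mathfrak{b}$. If $\mathfrak{a}=C_{\Pi^{\circ}}$ is complete, set $\mathcal{B}=\{\beta\in\Delta\mid\mathfrak{g}_{\beta}\subseteq C_{\Pi^{\circ}}\}$, so $\mathfrak{a}=\mathfrak{g}_{\mathcal{B}}$ and $\mathcal{B}$ is additively closed. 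On each real root line $\alpha+\mathbb{Z}\delta$ the set $\mathcal{B}$ equals $\{\alpha+n\delta\mid n\ge k_{\alpha}\}$, so $\mathcal{B}\cup-\mathcal{B}$ already covers that line, except --- when $k_{\alpha}+k_{-\alpha}=2$ --- for the single root $\alpha+(k_{\alpha}-1)\delta$; here completeness ($k_{\alpha}+k_{-\alpha}\le2$) is exactly the hypothesis that there is at most one such ``hole'' on each line, and the hole on the $\alpha$-line is the negative of the hole on the $(-\alpha)$-line, so the holes form a $\sigma$-stable set $H$ of $\pm$-pairs. Let $P$ be the additive closure in $\Delta$ of $\mathcal{B}\cup H$, equivalently the root set of the subalgebra generated by $\mathfrak{h}$, $C_{\Pi^{\circ}}$ and $\mathfrak{g}_{H}$. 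Using the quasicone relations and the tropical matrix description of Section~\ref{sec:tropical-matrix-alg}, one checks that $P$ is a parabolic system (additively closed, $P\cup-P=\Delta$, $P\ne\Delta$) and that every root of $P$ not already in $\mathcal{B}$ has its negative in $P$ as well; hence $P\smallsetminus\sigma(P)\subseteq\mathcal{B}$ and $\mathfrak{g}_{P\smallsetminus\sigma(P)}\subseteq\mathfrak{g}_{\mathcal{B}}=\mathfrak{a}$.

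For $\mathfrak{a}=C_{X}\oplus\mathfrak{n}_{X^{c}}$ with $\emptyset\ne X\subseteq\Pi^{\circ}$ and $C_{X}$ complete, I would run the same construction inside the affine subalgebra $\widehat{\mathfrak{g}}(X)$ --- adjoining the $\pm$-paired holes of $C_{X}$ to the roots of $C_{X}$ --- and then amalgamate with $\mathfrak{n}_{X^{c}}$ by means of the block operation $\boxbslash$ of Section~\ref{sec:tropical-matrix-alg}. Since every root occurring in $\mathfrak{n}_{X^{c}}$ has a nonzero component in $\Delta_{+}^{\circ}\smallsetminus\mathrm{add}_{\Delta^{\circ}}(X)$, it is the negative of no root of $\mathfrak{a}$ and of no adjoined hole, so it persists in $P\smallsetminus\sigma(P)$; once more $\mathfrak{g}_{P\smallsetminus\sigma(P)}\subseteq\mathfrak{a}$, whence $\mathcal{N}v=0$ and $v$ is primitive in every case.

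The one genuinely nonformal step is, in the last two cases, verifying that the set $P$ obtained by adjoining the holes is a bona fide parabolic system --- in particular that its additive closure neither exhausts $\Delta$ nor pulls a root of $\mathfrak{a}$ into $\sigma(P)$. This is where completeness is indispensable: without the bound $k_{\alpha}+k_{-\alpha}\le2$ a quasicone can have several holes on one root line, the hole-filling cascade need not stay confined to a proper half-space, and then $\mathfrak{a}$ need contain no nilradical at all. I expect this verification to fall out of the classification of parabolic systems of $A_{n}^{(1)}$ (Section~\ref{sec:2.2}) together with the idempotent description of its subalgebras in Section~\ref{sec:tropical-matrix-alg}, rather than requiring a direct root computation.
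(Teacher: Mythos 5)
Your construction is, despite the different packaging, the same one the paper uses. Since $\mathcal{B}\cap\left(-\mathcal{B}\right)=\emptyset$ (which follows from $k_{\alpha}+k_{-\alpha}\ge1$), the set $\mathcal{B}\cup H$ that you close up is exactly $\Delta\smallsetminus\sigma\left(A\right)$ for $A$ the root support of $\mathfrak{a}$, and that is precisely the set $P=\Delta\smallsetminus-A$ the paper writes down in all three cases; the paper additionally observes that $P\smallsetminus\sigma\left(P\right)=A$ on the nose, so one gets $\mathcal{N}=\mathfrak{a}$ rather than merely $\mathcal{N}\subseteq\mathfrak{a}$ (your weaker containment criterion is a sensible relaxation but changes nothing here). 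The only real divergence is that you pass to the additive closure and then hope it stays proper, whereas the paper simply asserts that $\Delta\smallsetminus-A$ is already additively closed.

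That assertion, which you correctly isolate as ``the one genuinely nonformal step'' and then defer, is where all the content sits, and it does not follow from completeness as literally defined. Additive closedness of $\Delta\smallsetminus-A$ amounts to the cross-line inequality $k_{\beta}+k_{\gamma}\le k_{\beta+\gamma}+1$ for all $\beta,\gamma,\beta+\gamma\in\Delta^{\circ}$ (applied on the negative lines), whereas completeness only bounds the same-line sums $k_{\alpha}+k_{-\alpha}$. Concretely, in $A_{2}^{\left(1\right)}$ the complete, normal quasicone with $k_{\pm\alpha_{1}}=k_{\pm\alpha_{2}}=1$, $k_{\theta}=2$, $k_{-\theta}=0$ has hole set $H=\left\{ \pm\alpha_{1},\pm\alpha_{2},\pm\left(\theta+\delta\right)\right\} $; then $\alpha_{1}+\alpha_{2}=\theta$ and $\theta+\left(-\theta-\delta\right)=-\delta$ both lie in $\mathrm{add}_{\Delta}\left(H\right)$, after which the closure is all of $\Delta$, and indeed no proper parabolic system with $P\smallsetminus\sigma\left(P\right)\subseteq\mathcal{B}$ can exist, since its Levi part $P\cap\sigma\left(P\right)$ would have to contain the additively closed hull of $H$, which is $\Delta$. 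So the cascade you worry about in your final paragraph is real and is not dispelled by the bound $k_{\alpha}+k_{-\alpha}\le2$: either the cross-line inequality has to be added to the hypotheses (or verified for the quasicones that actually arise as annihilators in the main argument), or the proof must be supplemented. The same caveat applies verbatim to the paper's unproved ``which is indeed parabolic''; for the case $\mathfrak{a}=\mathfrak{n}_{\Pi^{\circ}}$ both arguments are complete and correct.
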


\begin{proof}
It is sufficient to show that for each subalgebra $\mathfrak{a}$,
there exists a parabolic subalgebra $\mathfrak{p}=\mathcal{L}\oplus\mathcal{N}$
according to Thm. \ref{thm:parabolic induction} such that $\mathfrak{a}=\mathcal{N}$.

Let $A$ be one of the root sets $\left\{ \alpha+n_{\alpha}\delta\mid n_{\alpha}\ge k_{\alpha},\alpha\in\Delta^{\circ}\right\} \cup\mathbb{Z}_{+}\delta$
($\Delta_{+}^{\circ}+\mathbb{Z}\delta$, $\left\{ \alpha+n_{\alpha}\delta\mid n_{\alpha}\ge k_{\alpha},\alpha\in\Delta^{\circ}\left(X\right)\right\} \cup\mathbb{Z}_{+}\delta\cup\Delta_{+}^{\circ}\left(X^{c}\right)$)
corresponding to $C_{\Pi^{\circ}}$ ($\mathfrak{n}_{\Pi^{\circ}}$,
$C_{X}\oplus\mathfrak{n}_{X^{c}}$, respectively). Set $\mathfrak{p}=\mathfrak{p}\left(P\right)$
with $P=\Delta\smallsetminus-A$, which is indeed parabolic. Also
$\mathfrak{a}=\mathfrak{g}_{A}=\mathcal{N}$ and $\mathfrak{p}=\mathfrak{g}_{P\smallsetminus A}\oplus\mathfrak{g}_{A}$.
\end{proof}
For a $\mathfrak{g}$-module $V$ and some $v\in V$, denote by $\mathcal{A}nn\left(v\right)=\left\{ g\in\mathfrak{g}\mid gv=0\right\} \subset\mathfrak{g},$
and 

\[
\mathrm{Ann}\left(v\right)=\left\{ \varphi\in\Delta\mid\mbox{there is a }g\in\mathcal{A}nn\left(v\right)\mbox{ that satisfies }g\in\mathfrak{g}_{\varphi}\right\} \subset\Delta.
\]
As an immediately obvious matter of fact, $\mathrm{Ann}\left(v\right)$
is closed under addition in $\Delta$.

Denote by $\mathfrak{G}\subset2^{\mathfrak{g}}$ the set of subalgebras
of $\mathfrak{g}$. For the root operator $e_{\varphi}$ and $C=\mathcal{A}nn\left(w\right)$,
$\left(\varphi\in\Delta^{\mathrm{re}},w\in V\right)$, define a map
$e_{\varphi}:V\times\mathfrak{C}\to V\times\mathfrak{G}$ by $e_{\varphi}\left(w,C\right)=\left(e_{\varphi}w,C'\right)$,
such that $C'=\mathcal{A}nn\left(e_{\varphi}w\right)$ for some given
$w\in V$. Choose $\varphi\in\Delta$ such that $\mathcal{A}nn\left(e_{\varphi}v\right)\in\mathfrak{C}$,
then this gives rise to an action 
\begin{align}
e_{\varphi}:Q\times\mathfrak{C}\rightarrow & Q\times\mathfrak{C}\,:\quad\left(\vartheta,C\right)\mapsto\left(\vartheta',C'\right)\label{eq:action}
\end{align}
where $\vartheta'=\vartheta+\varphi$ and 
\[
C'=\min_{V\mid V_{\mu}=\left\{ 0\right\} }\left\{ \mathcal{A}nn\left(e_{\varphi}v\right)\mid v\in V_{\mu+\vartheta}\mbox{ and }\mathcal{A}nn\left(v\right)=C\right\} ,
\]
$V$ going over all irreducible non-dense weight $\mathfrak{g}$-modules
and the minimum refers to the partial order given by inclusion on
the subalgebras. The lower bound in $\mathfrak{G}$ is $\left\{ 0\right\} $.
For that reason, Zorn's lemma guarantees the existence of such a minimal
subalgebra. The function is well-defined. In fact, the image of $C$
under $e_{\varphi}\left(\vartheta,\cdot\right)$ is given by the Lie
algebra closure
\begin{align*}
\mathrm{cl}_{\mathfrak{g}}\left\langle \left(\mathrm{ad}e_{\varphi}\right)^{-1}\left(C\right)\cup\left\{ g\right\} \right\rangle  & =\mathrm{cl}_{\mathfrak{g}}\left\langle \left\{ e_{\psi}\mid\left[e_{\varphi},e_{\psi}\right]\in C\right\} \cup\left\{ g\right\} \right\rangle ,\mbox{ where}\\
g & =\begin{cases}
e_{-\left(\varphi+\vartheta\right)} & \mbox{if }\varphi+\vartheta\in\Delta^{re}\\
\mathfrak{h}^{\circ}\otimes t^{-k} & \mbox{if }\varphi+\vartheta=k\delta\:.
\end{cases}
\end{align*}
\begin{lem}
\label{h-Argument}Let $w\in V$ and $C=\mathcal{A}nn\left(w\right)$
be a quasicone subalgebra. If $\varphi\in\Delta^{\mathrm{re}}$ and
$e_{\varphi+\delta}\in\mathcal{A}nn\left(w\right)$, then $\mathcal{A}nn\left(e_{\varphi}w\right)$
contains $\mathfrak{h}^{\circ}\otimes t\mathbb{C}\left[t\right]$.
\end{lem}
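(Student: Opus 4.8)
The plan is to check directly that every element $h\otimes t^{k}$ with $h\in\mathfrak{h}^{\circ}$ and $k\ge1$ annihilates $e_{\varphi}w$; since these span $\mathfrak{h}^{\circ}\otimes t\mathbb{C}[t]$, that yields the claim. Two inputs feed into the argument. First, since $C=\mathcal{A}nn(w)$ is a quasicone subalgebra (Definition \ref{def: quasicone loc affine}) it contains $\mathcal{H}_{+}=\mathfrak{h}^{\circ}\otimes t\mathbb{C}[t]$; in particular $(h\otimes t^{k})w=0$ for all such $h,k$, and $\mathfrak{g}_{m\delta}\subset C$ for every $m\ge1$. Second, I would upgrade the hypothesis $e_{\varphi+\delta}\in\mathcal{A}nn(w)$ to $e_{\varphi+k\delta}\in\mathcal{A}nn(w)$ for \emph{all} $k\ge1$: writing $\varphi=\varphi^{\circ}+s\delta$ with $\varphi^{\circ}\in\Delta^{\circ}$ and choosing $h_{0}\in\mathfrak{h}^{\circ}$ with $\varphi^{\circ}(h_{0})\ne0$ (possible since $\varphi$ is real), the bracket $[h_{0}\otimes t^{k-1},e_{\varphi+\delta}]$ equals $\varphi^{\circ}(h_{0})\,e_{\varphi+k\delta}$ up to a central term that vanishes, because the cocycle pairs $h_{0}\in\mathfrak{h}^{\circ}$ against a root vector; and this bracket lies in $C$ because $C$ is a subalgebra containing both $h_{0}\otimes t^{k-1}$ and $e_{\varphi+\delta}$. (Alternatively one can invoke the additive closure of $\mathrm{Ann}(w)$ together with $(k-1)\delta\in\mathrm{Ann}(w)$.)

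Granting these two facts, the computation is one line: for $h\in\mathfrak{h}^{\circ}$ and $k\ge1$,
\[
(h\otimes t^{k})(e_{\varphi}w)=[h\otimes t^{k},e_{\varphi}]\,w+e_{\varphi}\bigl((h\otimes t^{k})w\bigr)=\varphi^{\circ}(h)\,e_{\varphi+k\delta}w+0=0,
\]
the first summand vanishing by the upgraded hypothesis and the second by $(h\otimes t^{k})w=0$. Since $h\in\mathfrak{h}^{\circ}$ and $k\ge1$ were arbitrary, $\mathfrak{h}^{\circ}\otimes t\mathbb{C}[t]\subset\mathcal{A}nn(e_{\varphi}w)$, as claimed.

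I do not anticipate a genuine obstacle: the statement is essentially bookkeeping with the affine bracket formula. The one point that deserves care — and where the quasicone hypothesis on $C$ is used in an essential way rather than just ``$C$ annihilates $w$'' — is the passage from the single element $e_{\varphi+\delta}$ to the whole ray $\{e_{\varphi+k\delta}\mid k\ge1\}$ inside $\mathcal{A}nn(w)$, which needs both that $C\supset\mathcal{H}_{+}$ and that $C$ is closed under the bracket. A minor secondary check is that $[h\otimes t^{k},e_{\varphi}]$ carries no central contribution, which holds because $\mathfrak{h}^{\circ}$ is Killing-orthogonal to every root space.
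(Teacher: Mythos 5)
Your proof is correct and follows essentially the same route as the paper's: expand $(h\otimes t^{k})(e_{\varphi}w)$ via the affine bracket, kill the second summand using $\mathcal{H}_{+}\subset C$, and kill the bracket term using $e_{\varphi+k\delta}w=0$. The only difference is that you spell out the upgrade from the single hypothesis $e_{\varphi+\delta}\in\mathcal{A}nn(w)$ to the whole ray $\{e_{\varphi+k\delta}\mid k\ge1\}$ via closure of $C$ under brackets with $\mathcal{H}_{+}$, a step the paper leaves implicit.
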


\begin{proof}
Choose a basis $\Pi$ for $\Delta$ such that $\varphi\in\Delta_{+}^{\circ}\left(\Pi\right)$.
Write $e_{k\delta}^{\left(\varphi\right)}=\left(h_{\varphi}\otimes t^{k}\right)$.
If $e_{\varphi}$ does not act trivially, which would trivially meet
the assertion, then 
\[
e_{k\delta}^{\left(\psi\right)}e_{\varphi}w=\left(\varphi\left(h_{\psi}\right)e_{\varphi+k\delta}+e_{\varphi}e_{k\delta}^{\left(\psi\right)}\right)w=0\mbox{ for all }\psi\in\Delta^{\circ}\left(\Pi\right)\mbox{ and }k>0.
\]
\end{proof}
\begin{defn}
\label{def: strategy} A \emph{\index{strategy}strategy for $v$}
is a composition of operators $s=e_{\varphi_{m}}\circ\cdots\circ e_{\varphi_{1}}$,
$\left(m\in\mathbb{Z}_{+}\right)$, that satisfies

\noindent\begin{minipage}[t]{1\columnwidth}%
\begin{itemize}
\item[ {\scriptsize \bf (S1)} ]$sv=e_{\varphi_{m}}\circ\cdots\circ e_{\varphi_{1}}v\ne0$ and

\item[ {\scriptsize \bf (S2)} ] if $C=\mathcal{A}nn\left(v\right)$
is a quasicone, then $\mathcal{A}nn\left(sv\right)$ is also a quasicone.

\item[ {\scriptsize \bf (S3)} ] $\varphi_{i}+\cdots+\varphi_{1}\in\Delta$
for $i\in\left\{ 1,\dots,m\right\} $.

\end{itemize}%
\end{minipage}

Denote the set of strategies by $\mathfrak{S}$. The strategy is said
to \emph{succeed }(or to be\emph{ \index{successful strategy}successful})
on the quasicone $C=\mathcal{A}nn\left(v\right)$ if and only if $\#\left(\mathcal{A}nn\left(sv\right)\right)<\#C$.
A strategy is called \emph{\index{circular strategy}circular} if
$\varphi_{1}+\cdots+\varphi_{n}\in\mathbb{Z}\delta$. We may say \emph{strategy
for} $C$ assuming implicitely the existence of a $v$ with the properties
given above. The \index{length of a strategy}\emph{length} of the
strategy $\ell\left(s\right)$ is the integer $n$.
\end{defn}

The length function and the function 
\[
s:Q\times\mathfrak{C}\rightarrow Q\times\mathfrak{C}\,:\;\left(\vartheta,C\right)\mapsto s\left(\vartheta,C\right)=e_{\varphi_{1}}\circ\cdots\circ e_{\varphi_{n}}\left(\vartheta,C\right)
\]
are well-defined. We use the arrow '$\rightsquigarrow$' and index
notation to indicate this transformation as $C_{\vartheta}\overset{s}{\rightsquigarrow}C'_{\vartheta'}$
and omit the $\vartheta$-subscript if it is clear from the context.
\begin{conjecture}
\textup{\label{conj:finite set of strategies} There is a finite set
of strategies $\mathcal{S}\subset\mathfrak{S}$ such that the number
of normal quasicones where no strategy succeeds is zero,
\[
\bigcap_{C\in\mathfrak{C}}\left\{ \#\mathcal{A}nn\left(sv\right)\ge\#C\mbox{ for all }s\in\mathcal{S}\right\} =\emptyset.
\]
}
\end{conjecture}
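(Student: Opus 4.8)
The plan is to prove Conjecture~\ref{conj:finite set of strategies} in two stages: an a priori bound confining the relevant normal quasicones to a finite interval of the lattice $\left(\mathcal{C},\le\right)$, and then the explicit construction of a successful strategy for each of the finitely many survivors. Two reductions come first. By Lemma~\ref{lem:base_change} it suffices to treat normal quasicones, and since the defect \ref{eq:Defect} depends only on $\mathrm{gap}(C)=\left(C_{\nu}+C_{-\nu}\mid\nu\in I_{n}\right)$ --- a datum left invariant by the base changes and translations used in that reduction --- both sides of the asserted identity are insensitive to equivalence. Also one may assume $\#C>0$, since when $\#C=0$ the quasicone is a cone, i.e.\ the nilradical $\mathcal{N}$ of a maximal parabolic (Theorem~\ref{thm:parabolic induction}), and by the Fact following Definition~\ref{def:semi-primitive} the generating vector is already primitive.

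\textbf{Stage 1 (the bound).} I would show that if $C=\mathcal{A}nn(v)$ for some $0\ne v$ in an irreducible non-dense weight $A_{n}^{\left(1\right)}$-module $V$, then $C\le\tilde{C}_{\tilde{d}}^{\mathrm{up}}$ for an explicit $\tilde{d}=\tilde{d}(n)$; recall that $\left(\mathcal{C},\le\right)$ is a complete semilattice with top $\tilde{C}_{-1}^{\mathrm{up}}$ and bottom $\bigwedge\mathcal{C}$. The mechanism: the coordinate $C_{\varphi}+C_{-\varphi}$ records how the root $\widehat{\mathfrak{sl}_{2}}$-subalgebra $p_{\varphi}(\mathfrak{g})\cong\widehat{\mathcal{L}}\left(\mathfrak{sl}_{2}\right)$ sees $v$; if it were too large, then running a \emph{circular} strategy supported on $\pm\varphi+\mathbb{Z}\delta$ (so that the weight returns to the coset of $\mathbb{Z}\delta$) and invoking the rank-$2$ picture \cite{Futorny1996,Bunke2009} together with the closedness of $\mathrm{Ann}(v)$ under addition would either produce a primitive vector outright or fill a hole of $V$, contradicting non-denseness. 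Hence the problematic normal quasicones all lie in the finite interval $\tilde{C}_{\tilde{d}}^{\mathrm{up}}/\bigwedge\mathcal{C}$.

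\textbf{Stage 2 (a strategy per survivor).} For every normal quasicone $C$ with $0<\#C$ and $C\le\tilde{C}_{\tilde{d}}^{\mathrm{up}}$ I would exhibit $s_{C}\in\mathfrak{S}$ with $\#\mathcal{A}nn(s_{C}v)<\#C$, and take $\mathcal{S}=\{s_{C}\}$, which is then finite. The building block is the explicit form of the action \ref{eq:action}: the image of $C$ under $e_{\varphi}$ is the Lie closure of $\left(\mathrm{ad}\,e_{\varphi}\right)^{-1}(C)$ together with one extra generator, which by Lemma~\ref{lem:tropicalLieClosure} amounts, tropically, to a $-1$ shift of the $\varphi$-indexed entries of the presentation matrix followed by re-taking the (tropical) Lie closure. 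Composing such steps along roots whose partial sums stay in $\Delta$ (condition (S3)) and whose total sum lies in $\mathbb{Z}\delta$ can be arranged to strictly lower one coordinate of $\mathrm{gap}$, hence $\#$. Condition (S2) is where Lemma~\ref{h-Argument} enters: whenever an intermediate step has $e_{\varphi_{i}+\delta}$ in the running annihilator, applying $e_{\varphi_{i}}$ keeps $\mathfrak{h}^{\circ}\otimes t\mathbb{C}[t]$ inside it, so conicity --- hence the quasicone property --- persists. Condition (S1), $s_{C}v\ne0$, is again forced by non-denseness: the $\varphi_{i}$ are chosen so that a premature vanishing $e_{\varphi_{i}}v'=0$ would push a point of $\mathrm{supp}(V)$ into the hole. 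Iterating $\mathcal{S}$ then drives $\#$ to $0$, i.e.\ to a cone, i.e.\ to a primitive vector --- the payoff for Theorem~\ref{thm:Main}.

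\textbf{The main obstacle} is Stage 2 in general rank: the list of survivors grows with $n$, and the combinatorics of choosing the $\varphi_{i}$ so that (S1), (S2) and (S3) hold simultaneously while $\mathrm{gap}$ strictly decreases is genuinely delicate --- this is exactly why $A_{4}^{\left(1\right)}$ ``required additional combinatorics'' and why, for all $A_{n}^{\left(1\right)}$, this remains only a conjecture. A uniform resolution would presumably come from analysing the tropical dynamical system $(\vartheta,C)\mapsto e_{\varphi}(\vartheta,C)$ on $\left(\mathcal{C},\le\right)$ and proving that the cone locus $\{\#C=0\}$ is reachable, by a bounded number of moves from a fixed finite repertoire, from every bounded initial configuration; turning ``the hole survives every premature vanishing'' from a heuristic into a theorem --- rather than verifying the algorithm case by case --- is the crux.
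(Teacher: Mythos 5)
Your two-stage plan is essentially the paper's own route: your Stage 1 is Lemma~\ref{lem:proof for c_1 above lower bound} together with Corollary~\ref{cor:Finite} (the shortest strategy $s_{k}$ succeeds once $c_{1,0}>\left(n+1\right)\varepsilon$, so only finitely many normal quasicones survive), and your Stage 2 is the \textsc{Concatenate Strategies} algorithm plus the hand-checked residual cases. You have also correctly diagnosed the status of the statement: it is labelled a conjecture precisely because Stage 2 is never carried out uniformly in $n$ --- the paper only executes it for $A_{2}^{\left(1\right)}$, $A_{3}^{\left(1\right)}$ and $A_{4}^{\left(1\right)}$ by exhaustive search and, for the last eight $A_{4}^{\left(1\right)}$ quasicones, by ad hoc strategies that are explicitly \emph{not} concatenations of simple basic strategies (the paper's own intermediate conjecture that simple basic strategies suffice is answered in the negative). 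So neither your proposal nor the paper contains a proof of the general statement.

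The one place where your sketch is weaker than what the paper actually establishes is Stage 1. You argue the bound by appeal to the rank-$2$ classification and a ``fill a hole'' contradiction, but the paper's Lemma~\ref{lem:proof for c_1 above lower bound} is a direct tropical computation: it tracks the entries $c_{i,j}$ through the two steps of $s_{k}=e_{-\alpha_{1}+k\delta}\circ e_{\alpha_{1}}$ and shows the defect drops by at least $c_{1,0}-\left(n+1\right)\varepsilon$. Your heuristic (``too large a coordinate forces a primitive vector or fills a hole'') would need to be made quantitative to recover an explicit $\tilde{d}\left(n\right)$; the closedness of $\mathrm{Ann}\left(v\right)$ under addition alone does not give you the $\left(n+1\right)\varepsilon$ threshold. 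Likewise, your claim that condition (S1) ``is forced by non-denseness'' is exactly the step the paper handles by taking the minimum over all modules in the definition of the action \ref{eq:action} rather than by proving nonvanishing for a fixed $v$; as stated, premature vanishing of $e_{\varphi_{i}}v'$ does not by itself contradict non-denseness --- it merely enlarges the annihilator, which the paper's formalism absorbs and yours does not. If you want to present this as a contribution rather than a restatement, the content has to be a uniform-in-$n$ replacement for the case-by-case Stage 2, which you have (honestly) not supplied.
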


Assume $C$ is given in normal form. Because of inequalities \ref{eq:inequalities},
it follows that $c_{\nu}\le\ell\left(\nu\right)$. Given $v\in V_{\mu-\varepsilon\delta}$,
there is a $k\in\mathbb{N}$ such that
\[
s_{k}=e_{-\alpha_{1}+k\delta}\circ e_{\alpha_{1}}
\]
 is a circular strategy for $v$ because of Lemma \ref{h-Argument}.
It is of minimal length and called a \emph{shortest strategy}\index{shortest strategy}.
With the following lemma we establish the fact that $k$ is determined
by $\varepsilon$, and that only for a finite number of annihilating
quasicones we cannot find a $k$ such that the shortest strategy $s_{k}$
succeeds.
\begin{lem}
\label{lem:proof for c_1 above lower bound} For $\varepsilon\in\left\{ 1,\dots,n\right\} $,
let $v\in V_{\mu-\varepsilon\delta}$ be an arbitrary vector, $V_{\mu}=\left\{ 0\right\} $
and the annihilating set of $v$ be a quasicone $\mathcal{A}nn\left(v\right)=C$
in normal form. Then there exists a number $k\in\left\{ 1,\dots,n-1\right\} $
such that the set $\mathcal{A}nn\left(e_{-\alpha_{1}+k\delta}\circ e_{\alpha_{1}}v\right)$
non-trivially contains a quasicone $C'$ satisfying $\#C'-\#C<0\,,$
if the number $c_{-1}=c_{1,0}$ is greater than $\left(n+1\right)\cdot\varepsilon$.
\end{lem}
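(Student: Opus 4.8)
The plan is to run the shortest circular strategy $s_k=e_{-\alpha_1+k\delta}\circ e_{\alpha_1}$ in two steps, tracking how the annihilating quasicone is forced to collapse on the $\alpha_1$-axis by the hole at $\mu$. Since $C$ is normal, $c_{\alpha_1}=1$; hence $e_{\alpha_1}\notin C=\mathcal{A}nn(v)$, so $w:=e_{\alpha_1}v\neq0$, and $e_{\alpha_1+\delta}\in C$, so Lemma~\ref{h-Argument} gives $\mathfrak{h}^{\circ}\otimes t\mathbb{C}[t]\subseteq\mathcal{A}nn(w)$. As $w\in V_{\mu-\varepsilon\delta+\alpha_1}$ and $V_{\mu}=\{0\}$, the space $\mathfrak{g}_{-\alpha_1+\varepsilon\delta}$ kills $w$; bracketing $e_{-\alpha_1+\varepsilon\delta}$ with $\mathfrak{h}^{\circ}\otimes t^{m}$ ($m\ge1$) then gives $e_{-\alpha_1+j\delta}\in\mathcal{A}nn(w)$ for all $j\ge\varepsilon$. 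Finally, for $e_{\varphi+m\delta}\in C$ one has $e_{\varphi+m\delta}w=[e_{\varphi+m\delta},e_{\alpha_1}]v$, which vanishes as soon as $m\ge\max(c_{\varphi},c_{\varphi+\alpha_1})$ (reading $c_{\varphi+\alpha_1}:=-\infty$ when $\varphi+\alpha_1\notin\Delta$, and using $2\alpha_1\notin\Delta$). So $\mathcal{A}nn(w)$ contains a quasicone $\tilde{C}$ whose $c$-values obey, by Lemma~\ref{lem:tropicalLieClosure}, $\tilde{c}_{\alpha_1}\le1$, $\tilde{c}_{-\alpha_1}\le\varepsilon$ and $\tilde{c}_{\varphi}\le\max(c_{\varphi},c_{\varphi+\alpha_1})$ otherwise.

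Next I would choose $k$. If $\varepsilon\le n-1$, put $k=\varepsilon$: then $e_{-\alpha_1+\varepsilon\delta}\in\mathcal{A}nn(w)$ forces $s_\varepsilon v=0$, so $\mathcal{A}nn(s_\varepsilon v)=\mathfrak{g}$ contains the bottom cone $\gamma(0^{\times q})$, of defect $0$, and since $\#C\ge(c_{\alpha_1}+c_{-\alpha_1}-2)_{+}=c_{-1}-1>(n+1)\varepsilon-1>0$ we are done with $C'=\gamma(0^{\times q})$. Hence assume $\varepsilon=n$ and take $k=n-1\in\{1,\dots,n-1\}$; if $u:=s_{n-1}v=0$ we conclude as above, so suppose $u\neq0$. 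Because $k+1=n=\varepsilon$ we still have $e_{-\alpha_1+(k+1)\delta}\in\mathcal{A}nn(w)$, so a direct computation analogous to the first step, applied to $u=e_{-\alpha_1+k\delta}w$ (using $c_{\alpha_1}=1$ and $e_{-\alpha_1+j\delta}\in\mathcal{A}nn(w)$ for $j\ge n$), gives $\mathfrak{h}^{\circ}\otimes t\mathbb{C}[t]\subseteq\mathcal{A}nn(u)$; and $e_{\varphi+m\delta}u=[e_{\varphi+m\delta},e_{-\alpha_1+k\delta}]w$ shows that $\mathcal{A}nn(u)$ contains a quasicone $C'$ with $c^{*}_{\varphi}\le b^{*}_{\varphi}:=\max(\tilde{c}_{\varphi},\tilde{c}_{\varphi-\alpha_1}-k)$, while $b^{*}_{\alpha_1}\le1$ and $b^{*}_{-\alpha_1}\le\varepsilon$ (the latter from $2\alpha_1\notin\Delta$ and the Heisenberg term $[e_{\alpha_1+m\delta},e_{-\alpha_1+k\delta}]\in\mathfrak{h}^{\circ}\otimes t^{m+k}$).

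It then remains to compare $\#C'$ and $\#C$ term by term over $\Delta_{+}^{\circ}$. The $\alpha_1$-term drops from $c_{-1}-1$ to at most $(1+n-2)_{+}=n-1$, a saving of at least $c_{-1}-n$. For a positive root $\psi\neq\alpha_1$ one would use that $\psi-\alpha_1\in\Delta$ forces $\psi=\alpha_1+\dots+\alpha_{j}$ (so then $\psi+\alpha_1\notin\Delta$, and only one ``correction'' can occur per root), the bounds $c_{\psi'}\le\ell(\psi')\le n$ for positive roots $\psi'$, the tropical inequality $c_{-\psi+\alpha_1}\le c_{-\psi}+c_{\alpha_1}$, and $k=n-1$, to show $b^{*}_{\psi}+b^{*}_{-\psi}\le c_{\psi}+c_{-\psi}+1$, hence that the $\psi$-term of $C'$ exceeds that of $C$ by at most $1$. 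Summing over the at most $\tfrac{n(n+1)}{2}$ positive roots gives
\[
\#C'-\#C\;\le\;-(c_{-1}-n)+\Bigl(\tfrac{n(n+1)}{2}-1\Bigr)\;=\;n+\tfrac{n(n+1)}{2}-1-c_{-1}\;<\;0,
\]
since $c_{-1}>(n+1)n$ and $(n+1)n-\bigl(n+\tfrac{n(n+1)}{2}-1\bigr)=\tfrac{n(n-1)}{2}+1>0$; this is exactly where the hypothesis $c_{-1}>(n+1)\varepsilon$ is spent.

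The hard part is the per-root bound just quoted. A normal quasicone need not satisfy $c_{\psi}\ge1$ for a non-simple positive root $\psi$, and when $c_{\psi}$ (or $c_{\psi+\alpha_1}$) is non-positive the raw estimate on $b^{*}_{\psi}$ or $b^{*}_{-\psi}$ can overshoot the naive value. One must then invoke the quasicone relations together with the normal-form monotonicity $c_{\nu}+c_{-\nu}\ge c_{\nu'}+c_{-\nu'}$ for $\nu<\nu'$ to see that a small positive entry forces compensatingly large negative entries, so that the term of $C$ being compared is already large enough to absorb the growth; checking this trade-off uniformly in $\psi$, and verifying along the way that the sets produced really are quasicones (they contain $\mathfrak{h}^{\circ}\otimes t\mathbb{C}[t]$ and are closed under the tropical matrix relations of Lemma~\ref{lem:tropicalLieClosure}), is the main obstacle.
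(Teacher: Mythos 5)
Your first step is sound and matches the paper: applying $e_{\alpha_{1}}$ (nonzero because $c_{\alpha_{1}}=1$ in normal form), invoking Lemma~\ref{h-Argument}, and deriving $\tilde{c}_{\varphi}\le\max\left(c_{\varphi},c_{\varphi+\alpha_{1}}\right)$ is exactly the paper's computation of the primed entries. The argument breaks at the choice of $k$. For $\varepsilon\le n-1$ you set $k=\varepsilon$ precisely so that $e_{-\alpha_{1}+k\delta}w=0$, and then conclude from $\mathcal{A}nn\left(0\right)=\mathfrak{g}$. That is the trivial containment which the word ``non-trivially'' in the statement is there to exclude: the lemma's purpose is to show that the shortest \emph{strategy} $s_{k}$ succeeds on $C$, and by condition {\scriptsize\bf (S1)} of Definition~\ref{def: strategy} a strategy must satisfy $s_{k}v\ne0$. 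Producing the zero vector yields no new weight vector and no new annihilating quasicone, so Corollary~\ref{cor:Finite} and the subsequent reduction would not follow. The paper instead takes $k=\min\left(\varepsilon,c_{1,0}\right)-1=\varepsilon-1$ (under the hypothesis $c_{1,0}>\left(n+1\right)\varepsilon$), i.e.\ the largest shift for which $e_{-\alpha_{1}+k\delta}w\ne0$, and this is the choice you must analyze for \emph{all} $\varepsilon$, not only $\varepsilon=n$.

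In the one case you do treat non-degenerately ($\varepsilon=n$, $k=n-1$), the decisive per-root estimate $b_{\psi}^{*}+b_{-\psi}^{*}\le c_{\psi}+c_{-\psi}+1$ is only asserted, and you yourself flag its verification as ``the main obstacle,'' so the proof is not complete even there. Moreover the bookkeeping is coarser than necessary in a way that matters: $e_{\alpha_{1}}$ and $e_{-\alpha_{1}+k\delta}$ only alter entries in the rows and columns of the presentation matrix indexed by $0$ and $1$, so only $O\left(n\right)$ root pairs can change, not all $\tfrac{n\left(n+1\right)}{2}$ positive roots. The paper computes these finitely many changes explicitly via the tropical $\min/\max$ formulas, bounds each group's contribution by $1-k$, and combines this with the gain $\max\left(c_{1,0}-\varepsilon,0\right)$ on the $\alpha_{1}$ entry to get $\#C-\#\left(s_{k}C\right)\ge c_{1,0}-\varepsilon-n\varepsilon$. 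Your $O\left(n^{2}\right)$ summation would, even granting the unproved per-root bound, require $c_{1,0}>n+\tfrac{n\left(n+1\right)}{2}-1$, which is not implied by $c_{1,0}>\left(n+1\right)\varepsilon$ for small $\varepsilon$; so once the degenerate shortcut is removed, your estimate does not close the general case. To repair the proof: use $k=\min\left(\varepsilon,c_{1,0}\right)-1$ throughout, and carry out the explicit computation of the four families of modified entries, restricting the loss to the affected root pairs as in the paper.
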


\begin{proof}
The transforms are summarized schematically as
\begin{align*}
\left(\begin{array}{cccccc}
1 & 1 & c_{0,2} & c_{0,3} & \cdots & c_{0,n}\\
c_{1,0} & 1 & 1 & c_{1,3} & \cdots & c_{1,n}\\
c_{2,0} & c_{2,1} & 1\\
c_{3,0} & c_{3,1} &  & \ddots & \ddots\\
\vdots & \vdots &  & \ddots & 1 & 1\\
c_{n,0} & c_{n,1} &  &  & c_{n,n-1} & 1
\end{array}\right)_{-\varepsilon\delta} & \overset{e_{\alpha_{1}}}{\rightsquigarrow}\left(\begin{array}{cccccc}
1 & 1 & c_{0,2} & c_{0,3} & \cdots & c_{0,n}\\
\widehat{c_{1,0}} & 1 & c_{1,2}' & c_{1,3}' & \cdots & c_{1,n}'\\
c_{2,0}' & c_{2,1} & 1\\
c_{3,0}' & c_{3,1} &  & \ddots & \ddots\\
\vdots & \vdots &  & \ddots & 1 & 1\\
c_{n,0}' & c_{n,1} &  &  & c_{n,n-1} & 1
\end{array}\right)_{\alpha_{1}-\varepsilon\delta}
\end{align*}

\begin{align*}
\overset{e_{-\alpha_{1}+k\delta}}{\rightsquigarrow}\left(\begin{array}{cccccc}
1 & 1 & c_{0,2}'' & c_{0,3}'' & \cdots & c_{0,n}''\\
\widehat{c_{1,0}} & 1 & c_{1,2}' & c_{1,3}' & \cdots & c_{1,n}'\\
c_{2,0}' & c_{2,1}'' & 1\\
c_{3,0}' & c_{3,1}'' &  & \ddots & \ddots\\
\vdots & \vdots &  & \ddots & 1 & 1\\
c_{n,0}' & c_{n,1}'' &  &  & c_{n,n-1} & 1
\end{array}\right)_{-\delta}\\
\end{align*}
where $\widehat{c_{1,0}}=\min\left(\varepsilon,c_{1,0}\right)$, and
the last step determines $k=\widehat{c_{1,0}}-1=\min\left(\varepsilon-1,c_{1,0}-1\right)$.
Without loss of generality, there are no additional trivial actions
in the second step. For all $i=1,\dots,n$,
\begin{align*}
c_{1,i}' & =\begin{cases}
\min\left(\widehat{c_{1,0}}+c_{0,i},c_{0,i}\right) & \quad\mbox{ if }c_{1,i}\le c_{0,i}\\
\min\left(\widehat{c_{1,0}}+c_{0,i},c_{1,i}\right) & \quad\mbox{ if }c_{1,i}>c_{0,i}
\end{cases}\\
 & =\min\left(\widehat{c_{1,0}}+c_{0,i},\max\left(c_{0,i},c_{1,i}\right)\right)\\
c_{i,0}' & =\begin{cases}
\min\left(\widehat{c_{1,0}}+c_{i,1},c_{i,1}\right) & \quad\mbox{ if }c_{i,0}\le c_{i,1}\\
\min\left(\widehat{c_{1,0}}+c_{i,1},c_{i,0}\right) & \quad\mbox{ if }c_{i,0}>c_{i,1}
\end{cases}\\
 & =\min\left(\widehat{c_{1,0}}+c_{i,1},\max\left(c_{i,1},c_{i,0}\right)\right)\\
c_{0,i}'' & =\begin{cases}
\min\left(1+c_{1,i}',c_{0,i}\right) & \quad\mbox{ if }c_{0,i}\ge c_{1,i}-k\\
\min\left(1+c_{1,i}',c_{1,i}'-k\right) & \quad\mbox{ if }c_{0,i}<c_{1,i}-k
\end{cases}\\
 & =\min\left(1+c_{1,i}',\max\left(c_{0,i},c_{1,i}'-k\right)\right)\\
c_{i,1}'' & =\begin{cases}
\min\left(1+c_{i,0},c_{i,1}\right) & \quad\mbox{ if }c_{i,1}\ge c_{i,0}'-k\\
\min\left(1+c_{i,0},c_{i,0}'-k\right) & \quad\mbox{ if }c_{i,1}<c_{i,0}'-k
\end{cases}\\
 & =\min\left(1+c_{i,0},\max\left(c_{i,1},c_{i,0}'-k\right)\right)\,.
\end{align*}
First, $c_{1,0}-\widehat{c_{1,0}}=-\min\left(\varepsilon,c_{1,0}\right)+c_{1,0}=\max\left(c_{1,0}-\varepsilon,0\right)\,.$
Further $c_{1,i}'=\max\left(c_{0,i},c_{1,i}\right)$, $c_{i,0}'=\max\left(c_{i,1},c_{i,0}\right)$
and
\begin{align*}
c_{0,i}'' & =\min\left(1+\max\left(c_{0,i},c_{1,i}\right),\max\left(c_{0,i},\max\left(c_{0,i},c_{1,i}\right)-k\right)\right)\\
 & =\min\left(\max\left(c_{0,i}+k,c_{1,i}+k\right)-\left(k-1\right),\max\left(c_{0,i}+k,c_{1,i}\right)-k\right)\\
 & =\begin{cases}
c_{0,i} & \mbox{ if }c_{0,i}+k>c_{1,i}\\
c_{1,i}-k & \mbox{ else }
\end{cases}\\
 & =\max\left(c_{0,i},c_{1,i}-k\right)\\
c_{i,1}'' & =\min\left(1+c_{i,0},\max\left(c_{i,1},\max\left(c_{i,1},c_{i,0}\right)-k\right)\right)\\
 & =\begin{cases}
c_{i,0}-k & \mbox{ if }c_{i,0}-k>c_{i,1}\\
\min\left(1+c_{i,0},c_{i,1}\right) & \mbox{ if }c_{i,0}-k\le c_{i,1}
\end{cases}\\
 & =\max\left(c_{i,0}-k,\min\left(1+c_{i,0},c_{i,1}\right)\right)\,.
\end{align*}
 Now in each expression, $\left(i=1,\dots,n\right)$,

\begin{align*}
\left(c_{0,i}''+c_{1,i}'\right) & -\left(c_{0,i}+c_{1,i}\right)\\
 & =\max\left(-c_{0,i},-c_{1,i}\right)+\max\left(-c_{1,i},-c_{0,i}-k\right)\\
 & =-\min\left(2c_{1,i},c_{1,i}+c_{0,i},2c_{0,i}+k\right)\\
 & \le-\min\left(2c_{1,i},2c_{1,i}-1,2c_{1,i}+k-2\right)\mbox{, because }c_{0,i}+1\ge c_{1,i}\\
 & =-2c_{1,i}+\min\left(1,2-k\right)
\end{align*}
 and 
\begin{align*}
\left(c_{i,0}''+c_{i,1}'\right) & -\left(c_{i,0}+c_{i,1}\right)\\
 & =\max\left(-c_{i,1},-c_{i,0}\right)+\max\left(-c_{i,1}-k,\min\left(1-c_{i,1},-c_{i,0}\right)\right)\\
 & =-\min(2c_{i,1}+k,c_{i,1}+c_{i,0}+k,\max\left(2c_{i,1}-1,c_{i,0}+c_{i,1}\right),\\
 & \qquad\qquad\max\left(c_{i,1}+c_{i,0}-1,2c_{i,0}\right))\\
 & \le-\min(2\left(c_{i,0}-1\right)+k,2c_{i,0}-1+k,\\
 & \qquad\qquad\max\left(2\left(c_{i,0}-1\right)-1,2c_{i,0}-1\right),\max\left(2\left(c_{i,0}-1\right),2c_{i,0}\right))\\
 & \mbox{(because }c_{i,0}\le c_{i,1}+1)\\
 & =-\min(2c_{i,0}-2+k,2c_{i,0}-1,2c_{i,0})\\
 & =-2c_{i,0}+\max(2-k,1)\,.
\end{align*}
 Consequently, the sum 
\begin{align*}
\left(c_{0,i}''+c_{1,i}'\right) & -\left(c_{0,i}+c_{1,i}\right)+\left(c_{i,0}''+c_{i,1}'\right)-\left(c_{i,0}+c_{i,1}\right)\\
 & =-2c_{1,i}+\min\left(1,2-k\right)-2c_{i,0}+\max(2-k,1)\\
 & \le-2\left(\left(c_{0,i}-c_{1,0}\right)+c_{i,0}\right)+\min\left(1,2-k\right)+\max(1,2-k)\\
 & \mbox{(because }c_{0,i}\le c_{1,i}+c_{1,0})\\
 & \le-2\left(1+c_{1,0}-c_{1,0}\right)+3-k\mbox{ (because }C\mbox{ is normal)}\\
 & =1-k\,.
\end{align*}
 Therefore, the gap decreases by at least 
\begin{align*}
\#C-\#\left(s_{k}\left(C\right)\right) & \ge\max\left(c_{1,0}-\varepsilon,0\right)+n\cdot\left(1-k\right)\\
 & =\max\left(c_{1,0}-\varepsilon,0\right)+n\cdot\left(1-\min\left(\varepsilon,c_{1,0}\right)-1\right)\\
 & =c_{1,0}-\varepsilon-n\cdot\varepsilon\mbox{ if \ensuremath{c_{1,0}\ge n}}.
\end{align*}
So, the balance is greater than zero if $c_{-1}=c_{1,0}>\left(n+1\right)\cdot\varepsilon$,
as claimed.
\end{proof}
\begin{cor}
\label{cor:Finite}The number of annihilating quasicones for which
$s_{k}$ does not succeed for any $k\in\mathbb{N}$ is finite.
\end{cor}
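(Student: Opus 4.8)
The plan is to deduce the corollary directly from Lemma~\ref{lem:proof for c_1 above lower bound}, combined with a finite enumeration of the normal quasicones that remain below the bound it provides. First I would pass to normal form: by Lemma~\ref{lem:base_change} every annihilating quasicone $C=\mathcal{A}nn(v)$ is mapped to a normal quasicone $C'$ by an automorphism of $\mathfrak{g}$ induced by a change of basis of $\Delta^{\circ}$, and such an automorphism permutes $\Delta_{+}^{\circ}$, so by \ref{eq:Defect} it preserves the defect, $\#C=\#C'$, and it carries a shortest strategy for $C$ to a shortest strategy for $C'$. Hence it is enough to bound the set of \emph{normal} quasicones on which no $s_{k}$ succeeds.

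Next, fix such a normal quasicone $C$ and suppose it occurs as $\mathcal{A}nn(v)$ for some $v\in V_{\mu-\varepsilon\delta}$ with $V$ irreducible non-dense, $V_{\mu}=\{0\}$ and $\varepsilon\in\{1,\dots,n\}$, which is exactly the range of $\delta$-depths for which Lemma~\ref{lem:proof for c_1 above lower bound} is formulated. Taking the contrapositive of that lemma: if no $s_{k}$ succeeds on $C$, then necessarily $c_{-1}=c_{1,0}\le(n+1)\varepsilon$, and since $\varepsilon\le n$ this forces the single coordinate $c_{-1}\le n(n+1)$.

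It now remains to see that only finitely many normal quasicones of $A_{n}^{(1)}$ satisfy $c_{-1}\le n(n+1)$. A quasicone subalgebra is determined by its integer coordinate vector $(c_{\kappa})_{\kappa\in\pm I_{n}}$, so it suffices to confine each of these finitely many coordinates to a fixed bounded interval. For $\nu\in I_{n}$ the subalgebra inequalities \ref{eq:inequalities} give $1\le c_{\nu}\le\ell(\nu)\le n$. By Definition~\ref{def:normal quasicone} the sequence $(c_{\nu}+c_{-\nu})_{\nu\in I_{n}}$ is monotonically decreasing, and its largest term is attained at the minimal element of $I_{n}$, which is a simple root and so has $c_{\nu}=1$ there; hence $c_{\nu}+c_{-\nu}\le 1+c_{-1}$ for all $\nu\in I_{n}$, and with $c_{\nu}\ge1$ this gives $c_{-\nu}\le c_{-1}\le n(n+1)$. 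On the other side, the quasicone condition $c_{\nu}+c_{-\nu}\ge1$ together with $c_{\nu}\le n$ give $c_{-\nu}\ge 1-n$. So every coordinate lies in $[1-n,\,n(n+1)]$, and there are only finitely many such normal quasicones; transporting this back through Lemma~\ref{lem:base_change} shows the set of all annihilating quasicones on which no $s_{k}$ succeeds is finite.

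The whole argument is essentially bookkeeping once Lemma~\ref{lem:proof for c_1 above lower bound} is available; the only places where I would be careful are (a) the invariance of the defect and of the notion of ``shortest strategy'' under the normalizing base change of Lemma~\ref{lem:base_change}, and (b) checking that it really suffices to consider vectors lying at $\delta$-depth at most $n$ below a hole, so that $\varepsilon\le n$ and the bound $n(n+1)$ applies uniformly.
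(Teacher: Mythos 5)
Your argument is correct and is essentially the one the paper intends: the paper states the corollary without proof, but the sentence preceding Lemma~\ref{lem:proof for c_1 above lower bound} makes clear it is meant to follow by bounding $c_{1,0}\le(n+1)\varepsilon$ via the contrapositive and then enumerating the finitely many normal quasicones with all coordinates confined to a bounded box. Your explicit bookkeeping (normal-form reduction, $1\le c_{\nu}\le\ell(\nu)\le n$, and $1-n\le c_{-\nu}\le c_{-1}\le n(n+1)$ from normality and the quasicone inequalities) correctly supplies the details the paper leaves implicit, and the two caveats you flag are exactly the points a careful reader would want pinned down.
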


\subsection{Monoid of strategies}

Recall the exponential root notation, $\alpha_{i}\mapsto2^{i-1}$,
$\left(i=1,\dots,n\right)$. Consider the strategy
\[
s=e_{-\left(2^{n}-1\right)+k_{n}\delta}\circ e_{\alpha_{2}+k_{n-1}\delta}\circ\cdots\circ e_{2^{2}+k_{2}\delta}\circ e_{2^{1}+k_{1}\delta}\circ e_{2^{0}}
\]
for $v\in V_{\lambda-\delta}$ with $\mathcal{A}nn\left(v\right)=C$
and $\mathbf{k}=\left(k_{0},k_{1},\dots k_{n-1},k_{n}\right)\in\mathbb{Z}^{n+1}$
recursively defined by
\begin{align*}
k_{0} & =0\\
k_{i} & =\left(e_{2^{i-1}+k_{i-1}\delta}\circ\cdots\circ e_{2^{1}+k_{1}\delta}\circ e_{2^{0}}\left(-\delta,C\right)\right)_{i,i+1}-1,\mbox{ for }i=1,\dots,n-1\\
k_{n} & =\left(e_{2^{n-1}+k_{n-1}\delta}\circ\cdots\circ e_{2^{1}+k_{1}\delta}\circ e_{2^{0}}\left(-\delta,C\right)\right)_{n,0}+r\left(\mathbf{k}\right)
\end{align*}
where $r\left(\mathbf{k}\right)$ may be $-1$ or such that $k_{1}+\cdots+k_{n}=0$
(this way it is granted for $\mathfrak{h}^{\circ}\otimes t$ to annihilate
$sv$). This $s$ is a circular strategy and we call it \label{def: shortest long strategy}
the \emph{shortest long strategy}.
\begin{prop}
\label{prop:Strategy sequence of roots}A strategy for $C$ can be
uniquely identified with a sequence of roots $\varphi_{0},\dots,\varphi_{m}\in\Delta^{\circ}$,
$m\ge0$.
\end{prop}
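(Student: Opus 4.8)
The plan is to realize the claimed identification as a (bijective) correspondence and to check it in both directions, the crucial point being that, once the classical part of each root operator occurring in a strategy is fixed, its $\delta$-shift is forced by the requirement that the running annihilator stay a quasicone.

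\textbf{Extracting the classical sequence.} By Definition~\ref{def: strategy} a strategy is a composition $s=e_{\varphi_{m}}\circ\cdots\circ e_{\varphi_{0}}$ of real root operators, the action \eqref{eq:action} being defined only for $\varphi\in\Delta^{\mathrm{re}}$. Since $A_{n}^{\left(1\right)}$ is untwisted, $\Delta^{\circ}\cap\mathbb{Z}\delta=\emptyset$, so every real root has a unique expression $\varphi_{i}=\bar\varphi_{i}+k_{i}\delta$ with $\bar\varphi_{i}\in\Delta^{\circ}$ and $k_{i}\in\mathbb{Z}$; assigning to $s$ the tuple $\left(\bar\varphi_{0},\dots,\bar\varphi_{m}\right)\in\left(\Delta^{\circ}\right)^{m+1}$ is therefore well defined. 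I would first record that the conditions (S1)--(S3) translate into conditions on this classical tuple together with the fixed vector $v$ (equivalently $C=\mathcal{A}nn\left(v\right)$ and its weight): (S3) says that each partial sum $\bar\varphi_{0}+\cdots+\bar\varphi_{i}$ lies in $\Delta^{\circ}\cup\left\{ 0\right\} $, and (S1) that every intermediate operator acts nontrivially.

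\textbf{Forcing the $\delta$-shifts.} This is the heart of the matter. Suppose the current annihilator $C'=\mathcal{A}nn\left(w\right)$ is a quasicone and a classical root $\bar\varphi\in\Delta^{\circ}$ is prescribed; I claim there is exactly one integer $k$ with $e_{\bar\varphi+k\delta}w\neq0$ and $\mathcal{A}nn\left(e_{\bar\varphi+k\delta}w\right)$ again a quasicone, namely $k=c_{\bar\varphi}\left(C'\right)-1$, where $c_{\bar\varphi}\left(C'\right)$ is the threshold determined by $\bar\varphi+\ell\delta\in C'\iff\ell\ge c_{\bar\varphi}\left(C'\right)$. Nontriviality forces $\bar\varphi+k\delta\notin C'$, i.e. $k<c_{\bar\varphi}\left(C'\right)$. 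Conversely, reprising the computation in the proof of Lemma~\ref{h-Argument}: for $j\ge1$ and $h\in\mathfrak{h}^{\circ}$ one has $\left(h\otimes t^{j}\right)e_{\bar\varphi+k\delta}w=\bar\varphi\left(h\right)e_{\bar\varphi+\left(k+j\right)\delta}w$ because $h\otimes t^{j}\in\mathcal{H}_{+}\subset C'$, so that $\mathfrak{h}^{\circ}\otimes t\mathbb{C}\left[t\right]\subset\mathcal{A}nn\left(e_{\bar\varphi+k\delta}w\right)$ --- necessary for a quasicone by Definition~\ref{def: quasicone loc affine} --- forces $\bar\varphi+\left(k+1\right)\delta\in C'$, i.e. $k\ge c_{\bar\varphi}\left(C'\right)-1$. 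Hence $k$ is unique; this matches the conventions of the ``shortest'' strategies (for $\bar\varphi_{0}=\alpha_{1}\in\Pi^{\circ}$ and $C$ normal one gets $c_{\alpha_{1}}\left(C\right)-1=0$, no shift on the first operator). Iterating along $\left(\bar\varphi_{0},\dots,\bar\varphi_{m}\right)$, updating the state $\left(\vartheta,C\right)\in Q\times\mathfrak{C}$ by \eqref{eq:action} at each step, produces the shifts $k_{0},\dots,k_{m}$ and hence a unique composition $s=e_{\bar\varphi_{m}+k_{m}\delta}\circ\cdots\circ e_{\bar\varphi_{0}+k_{0}\delta}$; verifying (S1)--(S3) for it is then routine, so $s$ is a strategy whenever the classical tuple is admissible in the sense of the previous paragraph. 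For a circular strategy I would additionally allow the last shift to be normalized by the correction $r\left(\mathbf{k}\right)$ of the shortest long strategy above so that $k_{1}+\cdots+k_{m}=0$; this correction is again a function of the data already produced.

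\textbf{Conclusion and the main obstacle.} The two assignments are mutually inverse: projecting the reconstructed strategy onto classical parts returns the original tuple by construction, and for an arbitrary strategy the uniqueness just proved forces its $\delta$-shifts to agree with the reconstructed ones. I expect the only genuinely nontrivial point to be the \emph{sharpness} of the bound $k\ge c_{\bar\varphi}\left(C'\right)-1$ --- that no strictly smaller $\delta$-shift can keep the annihilator a quasicone, and, symmetrically, that the shift $k=c_{\bar\varphi}\left(C'\right)-1$ really lands back in $\mathfrak{C}$ rather than merely in $\mathfrak{G}$. Both I would settle as in Lemmas~\ref{h-Argument} and~\ref{lem:proof for c_1 above lower bound}, by expanding the action \eqref{eq:action} on the presentation matrix~\ref{Def: matrix presentation} through its $\min$--$\max$ formulas and checking that a shift below $c_{\bar\varphi}\left(C'\right)-1$ either reinstates a nonzero intersection with $\mathfrak{h}$ or drops some $\mathfrak{h}^{\circ}\otimes t^{j}$ from the annihilator, in either case contradicting Definition~\ref{def: quasicone loc affine}.
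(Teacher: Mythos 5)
Your proposal follows essentially the same route as the paper's proof: you recover the classical parts $\bar\varphi_{i}\in\Delta^{\circ}$ and then show the $\delta$-shifts are recursively forced to be the current annihilator's threshold minus one, exactly the recursion $k_{\ell+1}=\left(\cdots\right)_{\varphi_{\ell}}-1$ the paper writes down, with the same appeal to the computation of Lemma~\ref{h-Argument} to keep $\mathfrak{h}^{\circ}\otimes t\mathbb{C}\left[t\right]$ in the annihilator. Your version is in fact more explicit about why no other shift is admissible (the two-sided bound $c_{\bar\varphi}\left(C'\right)-1\le k<c_{\bar\varphi}\left(C'\right)$), which the paper leaves implicit.
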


\begin{proof}
Let the strategy $s$ for $v\in V_{\lambda+\psi}$ with $\mathcal{A}nn\left(v\right)=C$
be given by
\[
s=e_{\varphi_{m}+k_{m}\delta}\circ\cdots\circ e_{\varphi_{1}+k_{1}\delta}\circ e_{\varphi_{0}+k_{0}\delta}
\]
with $\varphi_{0},\dots,\varphi_{m}\in\Delta^{\circ}\left(\Pi^{\circ}\right)$
and the $k_{0},\dots,k_{n}\in\mathbb{Z}$ recursively defined by
\begin{align*}
k_{0} & =C_{\varphi_{0}}-1\\
k_{\ell+1} & =\left(e_{\varphi_{\ell}+k_{\ell}\delta}\circ\cdots\circ e_{\varphi_{1}+k_{1}\delta}\circ e_{\varphi_{0}+k_{0}\delta}\left(\psi,C\right)\right)_{\varphi_{\ell}}-1,\mbox{ for }\ell=0,\dots,m-1.
\end{align*}
Ad hoc, $\left[e_{-\delta},e_{\varphi_{\ell}+k_{\ell}\delta}\right]$
acts trivially on $e_{\varphi_{\ell}+k_{\ell}\delta}\circ\cdots\circ e_{\varphi_{1}+k_{1}\delta}\circ e_{\varphi_{0}+k_{0}\delta}\left(\psi;C\right)$
for all $\ell=0,\dots,m$ and thus $\mathfrak{h}\otimes t\mathbb{C}\left[t\right]$
continues to do so. 
\end{proof}
Define a finite set of strategies for $\mathfrak{g}=A_{n}^{\left(1\right)}$
by iteratively taking the set of strategies for $A_{n-1}^{\left(1\right)}$
and certain strategies that comprise the corresponding root operators
for all basic roots $\alpha_{1},\dots,\alpha_{n}$. 
\begin{defn}
\label{def: simple_basic_strategy} A \emph{simple basic strateg}y
is defined as
\begin{align*}
s_{\mathbf{r}} & =\mathbf{e}_{-\mathbf{r}}\circ e_{2^{n-1}}\circ\cdots\circ e_{2^{0}}\,,\qquad\mathbf{e}_{-\mathbf{r}}=e_{-r_{k}}\circ\cdots\circ e_{-r_{1}}\:,
\end{align*}
where $\mathbf{r}=\left(r_{k},\dots,r_{1}\right)$ is a monotonously
ordered root partition of $\theta=\alpha_{1}+\cdots+\alpha_{n}$,
i.e. $\alpha_{r_{1}}+\cdots+\alpha_{r_{k}}=\theta$, $r_{1}<\cdots<r_{k}$
or $r_{1}>\cdots>r_{k}$, and $\alpha_{r_{1}},\dots,\alpha_{r_{k}}\in\Delta_{+}^{\circ}$. 
\end{defn}

By choice, all simple basic strategies are circular. The element corresponding
to the partion with the 'just vertical' Young tableau is $s_{0}=e_{-\left(2^{n}-1\right)}\circ e_{2^{n-1}}\circ e_{2^{n-2}}\cdots\circ e_{1}.$
\begin{example}
The simple basic strategies for $A_{2}^{\left(1\right)}$ are 
\[
\left\{ e_{-1}\circ e_{1},\,e_{-3}\circ,\,e_{2}\circ e_{1},\,e_{-2}\circ e_{-1}\circ e_{2}\circ e_{1},\,e_{-1}\circ e_{-2}\circ e_{2}\circ e_{1}\right\} .
\]
\end{example}

\begin{rem}
The number of simple basic strategies for $A_{n}$ is $F_{n}=\sum_{i=0}^{n-1}2^{i+1}-1$,
which we leave to the reader as an easy exercise.
\end{rem}

\textbf{Conjecture.} The strategy that is successful for any quasicone
lies in the monoid generated set of simple basic strategies. 

\textbf{Answer.} The conjecture is wrong. There are $A_{4}^{\left(1\right)}$-quasicones
\textendash{} listed in Section 5.5 \textendash , for which a successful
strategy cannot be obtained by concatenating simple basic strategies. 

The root graph is the graph of roots $\varphi\in\Delta^{\circ}$,
with the edges between each two roots $\varphi_{1},\varphi_{2}$ which
satisfy $\varphi_{1}-\varphi_{2}\in\Delta^{\circ}$. The centered
root graph $\underset{\cdot}{\Delta}^{\circ}$ is the root graph with
a center added and connected to all nodes of the root graph, i.e.
the cone graph. By \ref{prop:Strategy sequence of roots}, a strategy
is uniquely identified by an oriented path on the centered root graph
of $\mathfrak{g}$. A path may contain circles and $n$-cycles but
no loops. The corresponding path monoid is generated by circle-free
paths and circles. For $A_{n}$ , there are $\left(2n-1\right)^{\ell-1}$
paths of length $\ell$.

\subsection{General Approach}
\begin{lem}
\label{lem:Delta-induction} Let $v\in V_{\mu-\delta}$, $V_{\mu}=\left\{ 0\right\} $,
and $\mathcal{A}nn\left(v\right)$ contain a pre-prosolvable subalgebra
\textup{
\[
\mathcal{A}nn\left(v\right)\supset\left\{ \begin{array}{ccccc}
\emptyset & A_{0,1} & A_{0,2} & \cdots & A_{0,n}\\
A_{1,0} & \emptyset\\
A_{2,0} &  & \ddots & \ddots & \vdots\\
\vdots &  & \ddots & \mathbb{Z}_{\ge c+c'} & \mathbb{Z}_{\ge c}\\
A_{n,0} &  & \cdots & \mathbb{Z}_{\ge c'} & \mathbb{Z}_{\ge c+c'}
\end{array}\right\} 
\]
}with $\min\left(A_{i,j}\right)+\min\left(A_{j,i}\right)>0$ for all
$i\ne j\in\left\{ 0,\dots,n\right\} $. Then there exists a $k\in\mathbb{Z}_{+}$
and a vector $w\in V_{\mu-k\delta}$ such that $\mathcal{A}nn\left(w\right)$
contains $\mathfrak{h}\otimes t\mathbb{C}\left[t\right]$. 
\end{lem}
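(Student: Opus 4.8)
The plan is to promote the single Heisenberg slice $\mathfrak{h}^{\circ}\otimes t$ — which, I claim, already lies in $\mathcal{A}nn(v)$ for free — to the whole positive Heisenberg $\mathfrak{h}^{\circ}\otimes t\mathbb{C}[t]$ after descending from $v$ along a suitable circular strategy. The free input is the hole: since $V_{\mu}=\{0\}$ and $\mathfrak{g}_{\delta}=\mathfrak{h}^{\circ}\otimes t$ for $A_{n}^{(1)}$, the map $\mathfrak{g}_{\delta}\colon V_{\mu-\delta}\to V_{\mu}=\{0\}$ vanishes, so $\mathfrak{h}^{\circ}\otimes t\subseteq\mathcal{A}nn(v)$; more generally, any weight vector at a weight in $\mu-\mathbb{Z}_{+}\delta$ picks up the corresponding slice $\mathfrak{h}^{\circ}\otimes t^{k}$. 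Thus it suffices to manufacture, for one descendant $w$ of $v$ lying at a weight $\mu-k\delta$, enough extra annihilating elements to force the remaining slices $\mathfrak{h}^{\circ}\otimes t^{j}$, $j\ge2$, into $\mathcal{A}nn(w)$.

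The mechanism for that is bracketing, and it is precisely the one used in Lemma \ref{h-Argument}: if $w$ is a weight vector with $\mathfrak{h}^{\circ}\otimes t\subseteq\mathcal{A}nn(w)$ and, for a family of real roots $\beta$ whose coroots span $\mathfrak{h}^{\circ}$, there are integers $m_{\beta}^{\pm}$ with $m_{\beta}^{+}+m_{\beta}^{-}\le1$ and $e_{\pm\beta+m_{\beta}^{\pm}\delta}\in\mathcal{A}nn(w)$, then $\mathcal{A}nn(w)\supseteq\mathfrak{h}^{\circ}\otimes t\mathbb{C}[t]$. Indeed $[\,h_{\beta}\otimes t,\,e_{\pm\beta+\ell\delta}\,]$ is a nonzero multiple of $e_{\pm\beta+(\ell+1)\delta}$, so $e_{\pm\beta+\ell\delta}\in\mathcal{A}nn(w)$ for all $\ell\ge m_{\beta}^{\pm}$, and $[\,e_{\beta+a\delta},\,e_{-\beta+b\delta}\,]=h_{\beta}\otimes t^{a+b}$ has no central term once $a+b\ne0$, so $h_{\beta}\otimes t^{s}\in\mathcal{A}nn(w)$ for every $s\ge m_{\beta}^{+}+m_{\beta}^{-}$ with $s\ne0$; over a spanning family of $\beta$ these elements span $\mathfrak{h}^{\circ}\otimes t^{s}$ for each $s\ge1$. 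So the lemma reduces to the claim that $v$ has a descendant $w$, at a weight $\mu-k\delta$, whose annihilator contains a full ``cone of real roots''.

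This last claim is the content of the $\Delta$-induction, which I would run on the rank $n$ (equivalently, on the chain of affine subsystems $\widehat{\mathfrak{g}}(S)$ obtained by deleting simple roots), using strategies assembled from the operators $e_{\pm\alpha_{i}}$ and tracked through the tropical presentation-matrix calculus of Section \ref{sec:tropical-matrix-alg}. The hypothesis on $\mathcal{A}nn(v)$ is tailored to this: its presentation matrix carries the distinguished $\alpha_{n}$-quasicone (with $c+c'\ge1$, forced by $\min A_{i,j}+\min A_{j,i}>0$) in the bottom-right $2\times2$ corner, and all other off-diagonal pairs are ``positive'' in the same sense. In the inductive step one first runs the circular rank-one strategy $e_{-\alpha_{n}+k\delta}\circ e_{\alpha_{n}}$, its image on the matrix being computed by Lemma \ref{lem:tropicalLieClosure}; because $c+c'\ge1$ and the strategy lands below the hole, the estimates of Lemma \ref{lem:proof for c_1 above lower bound} collapse that corner to defect $\le1$ while — by the preservation of the inequalities \eqref{eq:inequalities} under the move — leaving every remaining pair positive. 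The vector $e_{\alpha_{n}}v$ is nonzero precisely because $\alpha_{n}\notin\mathcal{A}nn(v)$ (if $e_{\alpha_{n}}$ acted trivially the conclusion would follow from a shorter argument). What remains after this step is, inside the annihilator of the new vector, a pre-prosolvable subalgebra of $A_{n-1}^{(1)}$-shape of exactly the form demanded by the inductive hypothesis, together with the now-good $\alpha_{n}$-directions; applying the inductive hypothesis inside that $A_{n-1}^{(1)}$ Levi and then the bracketing criterion above (or, equivalently, Lemma \ref{h-Argument}) in the $\alpha_{n}$-direction produces the required $w$. The base case $n=1$ is the $A_{1}^{(1)}$ instance, where the hypothesis is merely an $\alpha_{1}$-quasicone in $\mathcal{A}nn(v)$ and Lemma \ref{lem:proof for c_1 above lower bound} together with Lemma \ref{h-Argument} suffices.

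The hard part will not be the algebra of the brackets but the validity of the strategies: one must check condition \textbf{(S1)}, that every intermediate vector along the chosen circular strategy is nonzero, and condition \textbf{(S2)}, that the running annihilator stays pre-prosolvable — in particular a proper subalgebra that never swallows the $\mathfrak{sl}_{2}$ spanned by a real root and its negative at level zero — so that the next operator one wishes to apply lies outside it and hence acts injectively. This is exactly where the hypothesis ``pre-prosolvable with $\min A_{i,j}+\min A_{j,i}>0$'' is used essentially: the tropical calculus of Lemma \ref{lem:tropicalLieClosure} shows these strict inequalities are stable under the moves of the strategy, so no vector can collapse and the annihilator remains of the required type at every stage. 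Tracking the precise bounds $m_{\beta}^{\pm}$ produced by the composite strategy — to confirm $m_{\beta}^{+}+m_{\beta}^{-}\le1$ for a spanning family of $\beta$ — is then the routine matrix bookkeeping already illustrated in the proof of Lemma \ref{lem:proof for c_1 above lower bound}.
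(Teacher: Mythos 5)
Your reduction to the bracketing criterion is sound as far as it goes, and the opening observation (that $\mathfrak{h}^{\circ}\otimes t\subseteq\mathcal{A}nn(v)$ for free because $V_{\mu}=\{0\}$) matches the role the hole plays in the paper. The gap is in how you propose to verify the hypothesis of your criterion. You need a descendant $w$ with $e_{\pm\beta+m_{\beta}^{\pm}\delta}\in\mathcal{A}nn(w)$ and $m_{\beta}^{+}+m_{\beta}^{-}\le1$ for a spanning family of $\beta$, and you propose to get there by running $e_{-\alpha_{n}+k\delta}\circ e_{\alpha_{n}}$ and invoking Lemma \ref{lem:proof for c_1 above lower bound} to ``collapse that corner to defect $\le1$''. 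That lemma does not do this: it only guarantees a strict defect decrease when $c_{1,0}>(n+1)\cdot\varepsilon$, and by Corollary \ref{cor:Finite} there remain finitely many quasicones on which the shortest strategy never succeeds. Driving every pair down to $m^{+}+m^{-}\le1$ (or even to a GVM-complete quasicone, where the sum is still allowed to be $2$) is precisely the hard content of the strategy/algorithm sections, which the paper resolves only computationally and only for $n\le4$. Since Lemma \ref{lem:Delta-induction} must hold for all $n$ and is itself a tool inside the proof of the main proposition, routing its proof through the strategy machinery is both circular and insufficient. A smaller gap of the same kind: ``$e_{\alpha_{n}}v\ne0$ because $\alpha_{n}\notin\mathcal{A}nn(v)$'' is not guaranteed by the hypothesis, which permits $\min A_{n-1,n}\le0$.

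The paper's proof avoids defect reduction entirely by working in the Heisenberg direction rather than with real-root operators. After handling the indices $\{1,\dots,n-1\}$ by induction on $n$, one assumes $\check{\alpha}_{n}\otimes t^{k}\,v\ne0$ for some $k\in\{2,\dots,c+c'-1\}$, takes the maximal such $k$ and sets $w=(\check{\alpha}_{n}\otimes t^{k})v$. The decisive observation is that both $\check{\alpha}_{i}\otimes t$ and $\check{\alpha}_{i}\otimes t^{1-k}$ annihilate $w$ for free: the Heisenberg bracket with $\check{\alpha}_{n}\otimes t^{k}$ contributes no central term here, and after commuting past it these elements either act through $V_{\mu}=\{0\}$ or send $w\in V_{\mu+(k-1)\delta}$ into $V_{\mu}$. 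Having elements of $\delta$-degree $+1$ and $1-k\le-1$ inside the subalgebra $\mathcal{A}nn(w)$, iterated $\mathrm{ad}$ applied to $e_{\alpha_{n}+A_{n-1,n}\delta}$ followed by bracketing with $e_{-\alpha_{n}+A_{n,n-1}\delta}$ produces $\check{\alpha}_{n}\otimes t^{l}$ for every $l\in\mathbb{Z}$, with no constraint on $c+c'$ at all. Your bracketing criterion is essentially this computation, but stated with an unnecessarily strong hypothesis that you then cannot meet; the missing idea is that a single negative-degree Heisenberg annihilator, obtained by applying the maximal non-annihilating $\check{\alpha}_{n}\otimes t^{k}$ and exploiting the hole, replaces all of the defect reduction.
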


\begin{proof}
We aim to prove by induction on $n$. Provided the statement is proved
for $A_{n-1}^{\left(1\right)}$, it is also true for the submatrix/-algebra
with index set $\left\{ 1,\dots,n-1\right\} $ in $A_{n}^{\left(1\right)}$
by extending the action of $A_{n-1}^{\left(1\right)}$ canonically.
Induction beginning is $A_{1}^{\left(1\right)}$, which is well known
\cite{Futorny1996}. Assume, $\mathcal{A}nn\left(v\right)$ does not
contain $\check{\alpha}_{n}\otimes t\mathbb{C}\left[t\right]$, i.e.
$e_{k\delta}^{\left(\alpha_{n}\right)}v\ne0$ for some $k\in\left\{ 2,\dots,c+c'-1\right\} $,
choose the maximal of such $k$ and set $w=e_{k\delta}^{\left(\alpha_{n}\right)}v$.
Then $w\in V_{\mu+\left(k-1\right)\delta}$ and 
\[
e_{\left(1-k\right)\delta}^{\left(\alpha_{1}\right)},\dots,e_{\left(1-k\right)\delta}^{\left(\alpha_{n}\right)}\in\mathcal{A}nn\left(w\right).
\]
But then, running through all $\kappa,\eta\in\mathbb{N}$, the operators
\begin{align*}
\left[\left(\mathrm{ad}^{\kappa}\,e_{\delta}^{\left(\alpha_{n}\right)}\circ\mathrm{ad}^{\eta}\,e_{\left(1-k\right)\delta}^{\left(\alpha_{n}\right)}\right)\left(e_{\alpha_{n}+A_{n-1,n}\delta}\right),e_{-\alpha_{n}+A_{n,n-1}\delta}\right] & =e_{\left(\kappa+\eta\left(1-k\right)+A_{n-1,n}+A_{n,n-1}\right)\delta}^{\left(\alpha_{n}\right)}
\end{align*}
comprise $e_{l\delta}^{\left(\alpha_{n}\right)}$ for all integers
$l\in\mathbb{Z}$, since $1-k\le-1$. They all act trivially on $w$,
as desired.
\end{proof}
\begin{prop}
Let $V$ be a non-dense weight $\mathfrak{g}$-module, then there
exists a basis\linebreak{}
 $\Pi=\left\{ \alpha_{1},\dots,\alpha_{n},\delta\right\} \subset\Delta$
and a vector $v\in V$ that is primitive with respect to a quasicone
$C$ or to $\mathfrak{n}_{S}$ \textup{for some $S\subset\Pi^{\circ}$.}
\end{prop}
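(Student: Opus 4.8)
The plan is to inflate the annihilator of a suitably chosen weight vector of $V$, starting from a hole, until it contains either a quasicone subalgebra or a natural nilpotent subalgebra $\mathfrak{n}_S$. Since $V$ is non-dense there is a weight $\mu$ with $V_\mu=0$ lying in the same $Q$-coset as $\mathrm{supp}(V)$. For every root $\varphi$ and every $v\in V_{\mu-\varphi}$ one has $e_\varphi v=0$, hence $\varphi\in\mathrm{Ann}(v)$; since $\mathrm{Ann}(v)$ is additively closed in $\Delta$, choosing $v$ of weight as high as the support permits already produces a nonzero vector whose annihilated root set is a large, additively closed, cone-shaped subset of $\Delta$. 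Using Theorem \ref{thm:positive system by tuple} to change between positive systems, I would first normalise the basis $\Pi$ so that $\mu$ has the form $\mu_0-\delta$ with $\mu_0$ above $\mathrm{supp}(V)$ in the classical direction, placing the emerging cone conveniently relative to the presentation matrices of Section \ref{sec:tropical-matrix-alg}.

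Next I would split according to whether the obstruction to density has a classical (finite-type) direction. If for some basis $\Pi$ and some proper $S\subsetneq\Pi^{\circ}$ some $v\ne0$ is annihilated by the classical nilpotent $\mathcal{N}_{S^{c}}^{\pm}$ (equivalently, by an $\mathfrak{n}_S$ as in Definition \ref{def:semi-primitive}(iii)), then Proposition \ref{prop:G-mod induced GVM} identifies $V$ with a generalized Verma module and $v$ is primitive with respect to that $\mathfrak{n}_S$; this case is done. Otherwise no classical nilpotent annihilates any vector, so the obstruction is imaginary: restricting $V$ to $\mathfrak{h}+\mathcal{H}$ and invoking the structure of irreducible $\mathbb{Z}$-graded Heisenberg modules (see \cite{Futorny1996}) locates a vector killed by $\mathcal{H}_+$ or by $\mathcal{H}_-$, and then, combining this with the hole and iterating Lemma \ref{h-Argument}, one forces $\mathfrak{h}^{\circ}\otimes t\mathbb{C}[t]\subset\mathcal{A}nn(w)$ for a suitable descendant $w$; Lemma \ref{lem:Delta-induction} absorbs the remaining off-diagonal root strings, so that $\mathcal{A}nn(w)$ is a conic subalgebra in the sense of Definition \ref{def: quasicone loc affine}. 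Since no classical nilpotent annihilates a vector, $\mathcal{A}nn(w)\cap\mathfrak{h}=\{0\}$ and $p_\alpha(\mathcal{A}nn(w)/\mathcal{H}')$ is non-solvable for every $\alpha\in\Delta_+$, so the largest quasicone subalgebra $C$ contained in $\mathcal{A}nn(w)$ satisfies $Cw=0$.

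To upgrade $C$-semiprimitivity to genuine primitivity in the sense of Definition \ref{def:semi-primitive}, i.e. to reach a $C$ of defect zero, I would put $C$ in normal form via Lemma \ref{lem:base_change} and induct on $\#C$: by Lemma \ref{lem:proof for c_1 above lower bound} the shortest strategy $s_k$ strictly lowers $\#C$ whenever the off-diagonal entry $c_{1,0}$ exceeds the explicit bound $(n+1)\varepsilon$, and by Corollary \ref{cor:Finite} only finitely many normal quasicones survive, on which one applies the simple basic strategies of Definition \ref{def: simple_basic_strategy} and, where these fail, the additional combinatorics underlying Theorem \ref{thm:Main}. The main obstacle is precisely this last point: the reduction to finitely many residual quasicones (Corollary \ref{cor:Finite}) is soft, but proving that on each of them some strategy (possibly lying outside the monoid generated by the simple basic strategies) lowers the defect is genuinely combinatorial and has so far been carried out only for rank at most $4$. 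A secondary difficulty is the case split of the second paragraph, where one must exclude the possibility that a hole is hidden relative to every positive system, i.e. caught by neither a classical nor an imaginary direction.
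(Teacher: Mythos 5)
There is a genuine gap at the heart of your second paragraph, and it is not the ``secondary difficulty'' you describe but the crux of the whole proposition. Your case split assumes that if no classical nilpotent $\mathcal{N}_{S^{c}}^{\pm}$ annihilates a vector, then the obstruction is ``imaginary'' and restricting $V$ to $\mathfrak{h}+\mathcal{H}$ locates a vector killed by $\mathcal{H}_{+}$ or $\mathcal{H}_{-}$. Neither implication holds as stated: the restriction of $V$ to $\mathcal{H}$ is not an irreducible $\mathbb{Z}$-graded $\mathcal{H}$-module, the classification you invoke requires a finite-dimensional weight component and non-zero central charge (neither is assumed here), and in the central charge zero case the modules $L_{r,\Lambda}$ contain no $\mathcal{H}_{\pm}$-primitive vectors at all. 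Likewise, the step ``no classical nilpotent annihilates any vector, hence $\mathcal{A}nn(w)$ contains a quasicone'' is unjustified: to get a quasicone you need, for \emph{every} real root string $\varphi+\mathbb{Z}\delta$, a full tail $\{e_{\varphi+n\delta}\mid n\ge k_{\varphi}\}$ inside the annihilator with $k_{\varphi}+k_{-\varphi}\ge1$, and the hypotheses of Lemma \ref{lem:Delta-induction} ($\min(A_{i,j})+\min(A_{j,i})>0$ for all $i\ne j$) must first be verified. Producing these tails from a single hole is exactly the work that has to be done; additive closure of $\mathrm{Ann}(v)$ alone gives you one root per hole-adjacent vector, not a cone.

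The paper avoids your dichotomy entirely. Its proof is an induction on the rank $n$: the inductive hypothesis for $A_{n-1}^{(1)}$ supplies a vector annihilated by a quasicone (case I) or by some $\mathfrak{n}_{S'}$ (case II), and then an exhaustive binary-tree case analysis on the new root direction $\alpha_{n}$ --- at each node either some $e_{\varphi+m\delta}$ acts non-trivially (apply it and track the effect on the presentation matrix) or the whole root string acts trivially (absorb it into the annihilator) --- terminates either in the hypotheses of Lemma \ref{lem:Delta-induction} or in one of the subalgebras $\mathfrak{n}_{\pm\{\alpha_{k}\}}^{\square}$ via the auxiliary Lemma \ref{lem:last-step-main-proof}. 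All mixed configurations are handled by this bookkeeping rather than excluded by a classical-versus-imaginary alternative. Finally, your third paragraph (driving the defect to zero via strategies) is not part of this proposition: the statement only requires annihilation by \emph{some} quasicone or some $\mathfrak{n}_{S}$, and the defect reduction is the content of the later sections and of Theorem \ref{thm:Main}, carried out only for $n\le4$. Including it here both overshoots the claim and imports a restriction on the rank that the proposition does not have.
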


\begin{rem}
We enumerate according to the following paradigm: Write \textbf{1.}
if the respective operator $e_{\varphi}$ acts injectively on the
corresponding weight subspace, and \textbf{2.} if all of the operators
$\left\{ e_{\varphi+m\delta}\mid m\in\mathbb{Z}\right\} $ act trivially.
This will represent a binary tree. At the end, we have to check if
the resulting binary tree is complete, meaning that any leaf is one
of the subalgebras of the claimed type or such that an appropriate
lemma asserts the existence of a primitive element of the claimed
type. 
\end{rem}

\emph{Proof.} We proceed by induction on $n$. The induction start,
for $n=1$, we have two cases,

\begin{longtable}{lc|c}
 & \multicolumn{1}{c}{$\mathcal{A}nn\left(v_{\alpha}\right)\supset\left\{ \begin{array}{cc}
\emptyset & \emptyset\\
\left\{ k\right\}  & \emptyset
\end{array}\right\} _{\alpha-k\delta}$} & $\mathcal{A}nn\left(v_{-\delta}\right)\supset\left\{ \begin{array}{cc}
\left\{ 1\right\}  & \emptyset\\
\emptyset & \left\{ 1\right\} 
\end{array}\right\} _{-\delta}$\tabularnewline
 & \multicolumn{1}{c}{} & \tabularnewline
\cline{2-3} 
 &  & \tabularnewline
\textbf{1.} & $\overset{e_{-\alpha-m\delta}}{\rightsquigarrow}\left\{ \begin{array}{cc}
\left\{ m\right\}  & \emptyset\\
m\mathbb{Z}_{\ge0} & \left\{ m\right\} 
\end{array}\right\} _{-m\delta}$ & $\overset{e_{\alpha-\left(k-1\right)\delta}}{\rightsquigarrow}\left\{ \begin{array}{cc}
\emptyset & \emptyset\\
\left\{ m+1\right\}  & \emptyset
\end{array}\right\} _{\alpha-k\delta}$\tabularnewline
 &  & \tabularnewline
\textbf{1.1.} & $\overset{e_{-\alpha-m'\delta}}{\rightsquigarrow}\left\{ \begin{array}{cc}
* & \left\{ m+m'\right\} \\
m\mathbb{Z}_{\ge0} & *
\end{array}\right\} _{-\alpha-\left(m+m'\right)\delta}$ & %
\begin{minipage}[t]{0.42\columnwidth}%
and we can continue with the argument \textbf{1.} on the left side
of this table.%
\end{minipage}\tabularnewline
 &  & \tabularnewline
 & %
\begin{minipage}[t]{0.42\columnwidth}%
Now, Lemma \ref{lem:Delta-induction} applies, giving us a quasicone%
\end{minipage} & \tabularnewline
 &  & \tabularnewline
\textbf{2.} & $\left\{ \begin{array}{cc}
\emptyset & \emptyset\\
\mathbb{Z} & \emptyset
\end{array}\right\} _{\alpha}=\mathfrak{n}_{-\alpha}$ & $\left\{ \begin{array}{cc}
\left\{ 1\right\}  & \mathbb{Z}\\
\emptyset & \left\{ 1\right\} 
\end{array}\right\} _{-\delta}\supset\mathfrak{n}_{\alpha}$\tabularnewline
 & \multicolumn{1}{c}{} & \tabularnewline
\end{longtable}

This finishes the induction start.

By induction assumption, for $\mathfrak{g}\left(\Pi\smallsetminus\left\{ \alpha_{n}\right\} ,\delta\right)\cong A_{n-1}^{\left(1\right)}$
there is a root $\varphi\in\Delta\left(\Pi\smallsetminus\left\{ \alpha_{n}\right\} \right)$
and a vector in $v^{\left(n-1\right)}\in V_{\mu-\varphi}^{\left(n-1\right)}$
that is annihilated by a subalgebra equivalent to a quasicone $C$
(case \textbf{I}) or to $\mathfrak{n}_{S'}$ for some $S'\subset\Pi^{\circ}\smallsetminus\left\{ \alpha_{n}\right\} $(case
\textbf{II}) (cf. \ref{def: Levi and nilpotent} and \ref{eq:natural_nilpotent_subalgebra}).
By the non-density assumption, there exists a $\mu\in\mathfrak{h}^{*}\left(\Pi\smallsetminus\left\{ \alpha_{n}\right\} \right)$
with $\mu\notin\mathrm{supp}\left(V^{\left(n-1\right)}\right)$, i.e.
$\mathcal{U}\left(\mathfrak{g}\right)_{\varphi}v^{\left(n-1\right)}=0$,
if $v^{\left(n-1\right)}\in V_{\mu-\varphi}^{\left(n-1\right)}$. 

Now let $V$ be an irreducible $A_{n}^{\left(1\right)}$-module. The
natural $\mathfrak{g}$-module monomorphism $\iota:V^{\left(n-1\right)}\rightarrow V$
gives us an element $v\in V$ that contains as annihilator $\mathcal{A}nn\left(\iota\left(v^{\left(n-1\right)}\right)\right)$,
first of all, a quasicone (case \textbf{I}). Without restrictions
$v$ lies in the weight subspace $V_{\mu-\delta}$.%
\\
\\
\textbf{1.} $e_{\alpha_{n}+m\delta}v_{-\delta}\ne0$ for some $m\in\mathbb{Z}$

\noindent\begin{minipage}[t]{1\columnwidth}%
\begin{center}
$\overset{e_{\alpha_{n}+m\delta}}{\rightsquigarrow}\left\{ \begin{array}{ccccc}
\mathbb{Z}_{\ge2} & * & \cdots* & \emptyset & \emptyset\\
* & \ddots & \ddots & \vdots & \vdots\\
\vdots & \ddots & \mathbb{Z}_{\ge2} & \emptyset\\
* & \cdots & * & \emptyset & \emptyset\\
\mathbb{Z}_{\ge*-m} & \cdots & \mathbb{Z}_{\ge*-m} & \left\{ 1-m\right\}  & \emptyset
\end{array}\right\} _{\alpha_{n}+\left(m-1\right)\delta}$
\par\end{center}%
\end{minipage}

(The upper left $n\times\left(n-1\right)$-submatrix remains unchanged)\textbf{}\\
\textbf{1.1. }$e_{-\alpha_{n}+k\delta}\circ e_{-\alpha_{n}+j\delta}$
acts non-trivially for some $k<j<1-m$, without restrictions, $j=-m$
and $k=j-1$

\noindent\begin{minipage}[t]{1\columnwidth}%
\begin{center}
$\overset{e_{-\alpha_{n}-m\delta}^{2}}{\rightsquigarrow}\left\{ \begin{array}{ccccc}
\mathbb{Z}_{\ge2} & * & \cdots* & \mathbb{Z}_{\ge*} & \mathbb{Z}_{\ge-2m}\\
 & \ddots & \ddots & \vdots & \vdots\\
\vdots & \ddots & \mathbb{Z}_{\ge2} & \mathbb{Z}_{\ge*} & \mathbb{Z}_{\ge2m+*}\\
\mathbb{Z}_{\ge*} & \cdots\cdots & *\mathbb{Z}_{\ge*} & \mathbb{Z}_{\ge m+*} & \left\{ 2m+1\right\} \\
\mathbb{Z}_{\ge*-m} & \cdots & \mathbb{Z}_{\ge*-m} & \mathbb{Z}_{\ge*-m} & \mathbb{Z}_{\ge m+*}
\end{array}\right\} _{-\alpha_{n}-\left(2m+1\right)\delta}$
\par\end{center}%
\end{minipage}

Now, %
{} Lemma \ref{lem:Delta-induction} leads to the goal.\\
\\
\textbf{2.} (assuming $e_{\alpha_{n}+\mathbb{Z}\delta}v_{-\delta}=0$)

\noindent\begin{minipage}[t]{1\columnwidth}%
\begin{center}
$\left\{ \begin{array}{cccc}
 &  &  & \mathbb{Z}\\
 & * &  & \vdots\\
 &  &  & \mathbb{Z}\\
\emptyset & \cdots & \emptyset & \left\{ 1\right\} 
\end{array}\right\} _{-\delta}$
\par\end{center}%
\end{minipage}

This is a reduction to $A_{n-1}^{\left(1\right)}$, because $\mathfrak{n}_{\left\{ \alpha_{n}\right\} }^{\square}$
annihilates $v$.\textbf{}\\
\textbf{1.2.} (all $e_{-\alpha_{n}+\mathbb{Z}\delta}$ act zero)

\noindent\begin{minipage}[t]{1\columnwidth}%
\begin{center}
$\left\{ \begin{array}{ccccc}
\mathbb{Z}_{\ge2} & * & \cdots* & \emptyset & \emptyset\\
 & \ddots & \ddots & \vdots & \vdots\\
\vdots & \ddots & \mathbb{Z}_{\ge2} & \emptyset\\
\mathbb{Z}_{\ge*} & \cdots & \mathbb{Z}_{\ge*} & \emptyset & \emptyset\\
\mathbb{Z} & \cdots & \mathbb{Z} & \mathbb{Z} & \emptyset
\end{array}\right\} _{\alpha_{n}+\left(m-1\right)\delta}$
\par\end{center}%
\end{minipage}

This is a reduction to $A_{n-1}^{\left(1\right)}$ because $\mathfrak{n}_{-\left\{ \alpha_{n}\right\} }^{\square}$
annihilates $v$.
\begin{quote}
Before continuing with the main proof we will need an auxiliary lemma.
\end{quote}
\begin{lem}
\label{lem:last-step-main-proof}Let $\varphi\in\Delta^{\mathrm{re}}$
such that $\varphi+\alpha_{n-1}$ is not a root. If 
\[
\mathcal{A}nn\left(v_{\varphi}\right)\supset\left\{ \begin{array}{cccccc}
\ddots &  & \vdots & \vdots & \vdots & \vdots\\
\\
\\
\emptyset & \cdots & \emptyset & \emptyset\\
\mathbb{Z} & \cdots & \mathbb{Z} & \mathbb{Z} & \emptyset\\
\emptyset & \cdots & \emptyset & \emptyset & \emptyset & \emptyset
\end{array}\right\} _{\varphi}
\]
and $e_{-\varphi+k\delta}\notin\mathcal{A}nn\left(v_{\varphi}\right)$
for some $k\in\mathbb{Z}$, then 
\[
\mathcal{A}nn\left(v_{\varphi}\right)\supset\mathfrak{n}_{-\left\{ \alpha_{n}\right\} }^{\square}\text{ or }\mathcal{A}nn\left(v_{\varphi}\right)\supset s_{n}\mathfrak{n}_{-\left\{ \alpha_{n}\right\} }^{\square}.
\]
\end{lem}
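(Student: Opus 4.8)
The plan is to exploit the hypothesis $e_{-\varphi+k\delta}\notin\mathcal{A}nn(v_\varphi)$ to produce, by applying $e_{-\varphi+k\delta}$ to $v_\varphi$, a new weight vector $w$ whose annihilator regains control of the $e_{\alpha_n+\mathbb{Z}\delta}$ row in the matrix presentation, and then to read off which of the two claimed subalgebras is contained in $\mathcal{A}nn(w)$ (equivalently, $\mathcal{A}nn(v_\varphi)$ up to the relevant shift) according to whether or not the newly available root operator interacts with $\alpha_n$. First I would fix a basis $\Pi^\circ$ adapted to $\varphi$, in the sense that $\varphi\in\Delta_+^\circ$, so that the matrix presentation of $\mathcal{A}nn(v_\varphi)$ is the one displayed: the $(n-1,n-1)$-block is empty, the entire $\alpha_n$-row ($n$-th row, minus diagonal) is $\mathbb{Z}$, and the $(n,n)$ and further entries are empty. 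The condition that $\varphi+\alpha_{n-1}$ is not a root pins down the position of $\varphi$ relative to the last two simple roots and guarantees that $e_{-\varphi+k\delta}$ commutes with $e_{\pm\alpha_n+m\delta}$ for all $m$, which is the structural fact that makes the whole $\alpha_n$-row survive the action.

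Next, set $w=e_{-\varphi+k\delta}v_\varphi\ne 0$ (nonzero by choice of $k$). Since the operators in the $\alpha_n$-row of $\mathcal{A}nn(v_\varphi)$ all commute with $e_{-\varphi+k\delta}$, they remain annihilators of $w$; so $\mathcal{A}nn(w)$ again contains a full ``$\mathbb{Z}$-row'' in the $\alpha_n$ position. I would then split into two cases exactly as in the \textbf{1.2}/\textbf{2.} dichotomy of the main proof:

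(a) if $e_{\alpha_n+m\delta}w=0$ for all $m\in\mathbb{Z}$, then together with the $\mathbb{Z}$-row this forces $\mathcal{A}nn(w)\supset\mathfrak{n}_{-\{\alpha_n\}}^{\square}$, since $\mathfrak{n}_{-\{\alpha_n\}}^{\square}$ is by definition the block $\emptyset^{n\times n}\boxbslash\emptyset$, i.e. exactly a full $\mathbb{Z}$-row below an empty block; and

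(b) if instead $e_{\alpha_n+m\delta}w\ne 0$ for some $m$, I would argue that the bracket relations in $A_n^{(1)}$ — specifically $[e_{-\varphi+k\delta},e_{\alpha_n+m\delta}]$ lying in a root space adjacent to $\alpha_n$, together with the surviving $\mathbb{Z}$-row — generate, after a Weyl reflection $s_n$ in $\alpha_n$, the shifted nilpotent $s_n\mathfrak{n}_{-\{\alpha_n\}}^{\square}$; here the reflection is precisely what converts ``$e_{\alpha_n}$ acts injectively'' into the statement that the $(-\alpha_n)$-data is again of $\mathbb{Z}$-row type after relabelling the last node.

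The main obstacle, and the step that requires genuine care rather than bookkeeping, is case (b): one must verify that applying $e_{-\varphi+k\delta}$ does not destroy any of the $n\times(n-1)$ upper-left block of annihilators while simultaneously the surviving $\mathbb{Z}$-row plus the $\alpha_n$-injectivity really do close up under the Lie bracket to all of $s_n\mathfrak{n}_{-\{\alpha_n\}}^{\square}$ — i.e. that no ``holes'' appear in the intermediate rows of the matrix. I would handle this with the same commutant observation used elsewhere in the paper (commutants of $e_\varphi$ never coincide with constituents of a bracket producing $e_\varphi$, because in the matrix notation they lie in different rows/columns), which ensures that the upper-left block is preserved, and then a short explicit computation of the $2\times 2$ corner involving indices $n-1,n,0$ to confirm the $\mathbb{Z}$-row is exactly reproduced after the $s_n$-twist. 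Everything else is routine matrix-presentation arithmetic of the kind carried out in Lemma~\ref{lem:proof for c_1 above lower bound} and the \textbf{1.}/\textbf{2.} case analysis above.
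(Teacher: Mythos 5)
Your first step reproduces the paper's: apply the non-annihilating operator $e_{-\varphi+k\delta}$, use the hypothesis that $\varphi+\alpha_{n-1}$ is not a root to see that the operators in the $\mathbb{Z}$-row commute with it and therefore still annihilate the new vector, and then split according to whether the operators $e_{\alpha_{n}+m\delta}$ act trivially on it or not. This is exactly the paper's steps \textbf{1.}, \textbf{1.1}, \textbf{1.2}.

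However, you have attached the two cases to the two alternatives the wrong way around, and this is not a relabelling issue. If all $e_{\alpha_{n}+m\delta}$ annihilate the new vector, what you control is $\mathfrak{g}_{\alpha_{n}+\mathbb{Z}\delta}$ together with the surviving row of negative root spaces $\mathfrak{g}_{-\left(\alpha_{i}+\cdots+\alpha_{n-1}\right)+\mathbb{Z}\delta}$; since $s_{n}\left(-\alpha_{n}\right)=\alpha_{n}$ and $s_{n}\left(-\left(\alpha_{i}+\cdots+\alpha_{n}\right)\right)=-\left(\alpha_{i}+\cdots+\alpha_{n-1}\right)$, this collection of roots is precisely that of $s_{n}\mathfrak{n}_{-\left\{ \alpha_{n}\right\} }^{\square}$, i.e.\ the \emph{twisted} alternative (the paper's case \textbf{1.2}). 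The untwisted $\mathfrak{n}_{-\left\{ \alpha_{n}\right\} }^{\square}$ consists of the root spaces $\mathfrak{g}_{-\left(\alpha_{i}+\cdots+\alpha_{n}\right)+\mathbb{Z}\delta}$, in particular $\mathfrak{g}_{-\alpha_{n}+\mathbb{Z}\delta}$, about which the hypothesis of your case (a) says nothing, so your case (a) conclusion does not follow. Conversely, in your case (b) the mechanism you invoke --- a Weyl reflection ``converting `$e_{\alpha_{n}}$ acts injectively' into a $\mathbb{Z}$-row after relabelling'' --- cannot work: $s_{n}$ is an automorphism and cannot turn the statement that certain operators act injectively into the statement that they act trivially. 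In that case one must actually apply the nontrivially acting $e_{\alpha_{n}+m\delta}$ to pass to a further vector and then check, by the same commutator bookkeeping, that its annihilator acquires the missing column needed for the untwisted alternative; this is the paper's step \textbf{1.1} and is where the substantive content of the lemma lies, not in the corner computation you defer to at the end.
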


\emph{Proof.} Since $e_{-\varphi-k\delta}v_{\varphi}\ne0$ for $k\in\mathbb{Z}$,
then\\
\textbf{1.}

\noindent\begin{minipage}[t]{1\columnwidth}%
\begin{center}
$\left\{ \begin{array}{cccccc}
\ddots &  & \vdots & \vdots & \vdots & \vdots\\
\\
\\
\emptyset & \cdots & \emptyset & \emptyset\\
\mathbb{Z} & \cdots & \mathbb{Z} & \mathbb{Z} & \emptyset\\
\emptyset & \cdots & \emptyset & \emptyset & \emptyset & \emptyset
\end{array}\right\} _{\varphi}\overset{e_{-\varphi-k\delta}}{\rightsquigarrow}\left\{ \begin{array}{cccccc}
\ddots &  & \vdots & \vdots & \vdots & \vdots\\
\\
 &  & \emptyset & \emptyset\\
\emptyset & \cdots & \emptyset & \left\{ k\right\}  & \emptyset\\
\mathbb{Z} & \cdots & \mathbb{Z} & \mathbb{Z} & \left\{ k\right\}  & \emptyset\\
\emptyset & \cdots & \emptyset & \emptyset & \emptyset & \left\{ k\right\} 
\end{array}\right\} _{-k\delta}$
\par\end{center}%
\end{minipage}

Since$\varphi+\alpha_{n-1}$ is not a root the subspaces $\mathfrak{g}_{-\left(\alpha_{i}+\cdots+\alpha_{n-1}\right)}$,
$\left(i\in\left\{ 1,\dots,n-1\right\} \right)$ commute with $e_{-\varphi-k\delta}$
and therefore annihilate $e_{-\varphi-k\delta}v$.\\
\textbf{1.1.}

\noindent\begin{minipage}[t]{1\columnwidth}%
\begin{center}
$\overset{e_{\alpha_{n}-k'\delta}}{\rightsquigarrow}\left\{ \begin{array}{cccccc}
\ddots &  & \vdots & \vdots & \vdots & \vdots\\
\\
 &  & \emptyset & \emptyset\\
\emptyset & \cdots & \emptyset & \left\{ k\right\}  & \emptyset\\
\mathbb{Z} & \cdots & \mathbb{Z} & \mathbb{Z} & \left\{ k\right\}  & \emptyset\\
\mathbb{Z} & \cdots & \mathbb{Z} & \mathbb{Z} & \left\{ k+k'\right\}  & \left\{ k\right\} 
\end{array}\right\} _{\alpha_{n}-\left(k+k'\right)\delta}$
\par\end{center}%
\end{minipage}

and $\mathfrak{n}_{-\left\{ \alpha_{n-1}\right\} }^{\square}$ annihilates
$v$ as claimed.\\
\textbf{1.2. }

\noindent\begin{minipage}[t]{1\columnwidth}%
\begin{center}
$\left\{ \begin{array}{cccccc}
\ddots &  & \vdots & \vdots & \vdots & \vdots\\
\\
 &  &  & \emptyset\\
\emptyset & \cdots & \emptyset & \left\{ k\right\}  & \emptyset & \emptyset\\
\mathbb{Z} & \cdots & \mathbb{Z} & \mathbb{Z} & \left\{ k\right\}  & \mathbb{Z}\\
\emptyset & \cdots & \emptyset & \emptyset & \emptyset & \left\{ k\right\} 
\end{array}\right\} _{-k\delta}\overset{s_{n}}{\sim}\left\{ \begin{array}{cccccc}
\ddots &  & \vdots & \vdots & \vdots & \vdots\\
\\
 &  &  & \emptyset\\
\emptyset & \cdots & \emptyset & \left\{ k\right\}  & \emptyset & \emptyset\\
\emptyset & \cdots & \emptyset & \emptyset & \left\{ k\right\}  & \emptyset\\
\mathbb{Z} & \cdots &  & \mathbb{Z} & \mathbb{Z} & \left\{ k\right\} 
\end{array}\right\} _{-k\delta}$
\par\end{center}%
\end{minipage}

and $\mathcal{A}nn\left(v_{\varphi}\right)\supset s_{n}\mathfrak{n}_{-\left\{ \alpha_{n}\right\} }^{\square}$
as claimed.\hfill{}$\square$

Now, let's turn our attention to case \textbf{II}. We aim to show
that $\mathcal{A}nn\left(v\right)$ contains $\mathfrak{n}_{\left\{ \pm\alpha_{k}\right\} }^{\square}$
for some $k\in\left\{ 0,\dots n\right\} $.

Without restrictions, we assume that $\mathcal{A}nn\left(v\right)$
contains $\mathfrak{n}_{-\left\{ \alpha_{k}\right\} }^{\square}\smallsetminus\mathfrak{n}_{-\left\{ \alpha_{n}\right\} }^{\square}$,
$\left(k\in\left\{ 0,\dots,n-1\right\} \right)$, but no other $\mathfrak{n}_{-\left\{ \alpha_{k'}\right\} }^{\square}$
with $k'>k$, \small 
\[
\left\{ \begin{array}{ccccccccc}
* & * & \emptyset &  & \overset{k}{} & \cdots &  & \emptyset\\
* & *\\
 &  & \emptyset\\
\vdots &  & \ddots & \ddots &  &  &  & \vdots\\
* &  & \cdots & * & \emptyset\\
\mathbb{Z} &  & \cdots &  & \mathbb{Z} & \emptyset\\
\vdots &  &  &  & \vdots & \vdots & \ddots\\
\mathbb{Z} &  & \cdots &  & \mathbb{Z} & \emptyset & \cdots & \emptyset\\
\emptyset &  &  &  & \cdots &  &  &  & \emptyset
\end{array}\right\} _{\alpha_{k}}.
\]
\normalsize 

This time we must assume that $v$ lies in the weight subspace $V_{\mu+\alpha_{k}}$
in order to be obtain a consistent induction step. Without loss of
generality, we can always assume the worst case that all unspecified
actions are non-zero ($\emptyset$). 

We start with the special case $n-k=1$ (induction start):
\[
\mathcal{A}nn\left(v\right)\supseteq\left\{ \begin{array}{ccccccc}
* & * & \emptyset &  & \cdots &  & \emptyset\\
* & * & \vdots\\
\vdots & \ddots & \emptyset\\
 &  &  & \ddots &  &  & \vdots\\
* & \cdots &  & *\\
\mathbb{Z} &  & \cdots & \mathbb{Z} & \mathbb{Z} & \emptyset\\
\emptyset &  & \cdots &  &  &  & \emptyset
\end{array}\right\} _{\alpha_{n-1}}
\]
\\
\textbf{1. }Assuming at first the worst case, that all unspecified
actions are non-zero. For some $m\in\mathbb{Z}$, 

\noindent\begin{minipage}[t]{1\columnwidth}%
\begin{center}
$\overset{e_{-\left(\alpha_{n-1}+\alpha_{n}\right)+m\delta}}{\rightsquigarrow}$\small $\left\{ \begin{array}{ccccccc}
* & * & \emptyset &  & \cdots &  & \emptyset\\
* & * & \vdots\\
\vdots & \ddots & \emptyset &  &  &  & \vdots\\
 &  &  & \ddots\\
\emptyset &  & \cdots & \emptyset & \emptyset & \emptyset & \emptyset\\
\mathbb{Z} &  & \cdots & \mathbb{Z} & \mathbb{Z} & \emptyset & \left\{ -m\right\} \\
\emptyset &  & \cdots &  &  & \emptyset & \emptyset
\end{array}\right\} _{-\alpha_{n}+m\delta}$ \normalsize 
\par\end{center}%
\end{minipage}

Now, Lemma \ref{lem:last-step-main-proof} with $\varphi=-\alpha_{n}$
applies to attain a $\mathfrak{n}_{-\left\{ \alpha\right\} }^{\square}$-primitive
vector.\\
\textbf{2.1.}

\noindent\begin{minipage}[t]{1\columnwidth}%
\begin{center}
\small $\left\{ \begin{array}{cccccccc}
* & * & \emptyset &  &  & \cdots &  & \emptyset\\
* & * & \vdots\\
\vdots & \ddots & * &  &  &  &  & \vdots\\
 &  &  & \ddots\\
\\
\emptyset &  &  & \cdots & \emptyset & \emptyset & \emptyset & \emptyset\\
\mathbb{Z} &  &  & \cdots & \mathbb{Z} & \mathbb{Z} & \emptyset\\
\emptyset &  &  & \cdots & \emptyset & \mathbb{Z} & \emptyset & \emptyset
\end{array}\right\} _{\alpha_{n-1}}$$\overset{e_{\alpha_{n}+*\delta}}{\rightsquigarrow}$\small $\left\{ \begin{array}{cccccccc}
* & * & \emptyset &  &  & \cdots &  & \emptyset\\
* & * & \vdots\\
\vdots & \ddots & * &  &  &  &  & \vdots\\
 &  &  & \ddots\\
\\
\emptyset &  &  & \cdots & \emptyset & \emptyset & \emptyset & \emptyset\\
\mathbb{Z} &  &  & \cdots & \mathbb{Z} & \mathbb{Z} & \emptyset\\
\emptyset &  &  & \cdots & \emptyset & \mathbb{Z} & \emptyset & \emptyset
\end{array}\right\} _{\alpha_{n-1}+\alpha_{n}+*\delta}$\normalsize 
\par\end{center}%
\end{minipage}\\
\textbf{2.1.1.}

\noindent\begin{minipage}[t]{1\columnwidth}%
\begin{center}
$\overset{e_{-\left(\alpha_{n-2}+\alpha_{n-1}+\alpha_{n}\right)+*\delta}}{\rightsquigarrow}$\small $\left\{ \begin{array}{cccccccc}
* & * & \emptyset &  &  & \cdots &  & \emptyset\\
* & * & \vdots\\
\vdots & \ddots & * &  &  &  &  & \vdots\\
 &  &  & \ddots\\
 &  &  &  & \emptyset & \left\{ -m\right\}  & \emptyset & \emptyset\\
\emptyset &  &  & \cdots & \emptyset & \emptyset & \emptyset & \emptyset\\
\mathbb{Z} &  &  & \cdots & \mathbb{Z} & \mathbb{Z} & \emptyset & \emptyset\\
\emptyset &  &  & \cdots & \emptyset & \mathbb{Z} & \emptyset & \emptyset
\end{array}\right\} _{-\alpha_{n-2}+m\delta}$\normalsize 
\par\end{center}%
\end{minipage}

and again, Lemma \ref{lem:last-step-main-proof} with $\varphi=-\alpha_{n-2}$
applies to attain a $\mathfrak{n}_{-\left\{ \alpha\right\} }^{\square}$-primitive
vector.\\
\textbf{2.2.}

\noindent\begin{minipage}[t]{1\columnwidth}%
\begin{center}
\small $\left\{ \begin{array}{ccccc}
\ddots &  & \vdots & \vdots & \vdots\\
 & \emptyset\\
\cdots & \emptyset & \emptyset & \emptyset & \emptyset\\
\cdots & \mathbb{Z} & \mathbb{Z} & \emptyset & \mathbb{Z}\\
\cdots & \emptyset & \mathbb{Z} & \emptyset & \emptyset
\end{array}\right\} _{\alpha_{n-1}}\overset{s_{n}}{\sim}\left\{ \begin{array}{ccccc}
\ddots &  & \vdots & \vdots & \vdots\\
 & \emptyset\\
\cdots & \emptyset & \emptyset & \emptyset & \emptyset\\
\cdots & \emptyset & \mathbb{Z} & \emptyset & \emptyset\\
\cdots & \mathbb{Z} & \mathbb{Z} & \mathbb{Z} & \emptyset
\end{array}\right\} _{\alpha_{n-1}+\alpha_{n}}\checkmark$\normalsize 
\par\end{center}%
\end{minipage}\\
\textbf{2.1.2.}

\noindent\begin{minipage}[t]{1\columnwidth}%
\begin{center}
\small $\left\{ \begin{array}{cccccc}
\ddots &  & \vdots & \vdots & \vdots & \vdots\\
\\
\\
\cdots & \emptyset & \emptyset & \emptyset\\
\cdots & \mathbb{Z} & \mathbb{Z} & \mathbb{Z} & \emptyset\\
\cdots & \emptyset & \mathbb{Z} & \mathbb{Z} & \emptyset & \emptyset
\end{array}\right\} _{\alpha_{n-1}+\alpha_{n}+*\delta}$\normalsize 
\par\end{center}%
\end{minipage}\\
\textbf{2.1.2.1. }Let $k\in\left\{ 1,\dots,n-3\right\} $ be the largest
number such that $e_{-\left(\alpha_{k}+\cdots+\alpha_{n}\right)+j\delta}$
does not act zero for some $j\in\mathbb{Z}$.

\noindent\begin{minipage}[t]{1\columnwidth}%
\begin{center}
$\overset{e_{-\left(\alpha_{k}+\cdots+\alpha_{n}\right)+*\delta}}{\rightsquigarrow}$\small $\left\{ \begin{array}{cccccccccc}
* & \emptyset &  &  & ^{k} & \cdots &  &  &  & \emptyset\\
\emptyset & \emptyset\\
 &  & \ddots &  &  &  &  &  &  & \vdots\\
 &  &  & \emptyset & \emptyset & \cdots & \emptyset & \left\{ m\right\}  & \emptyset & \emptyset\\
\vdots &  &  &  & \emptyset\\
 &  &  &  &  & \ddots\\
 &  &  &  &  &  &  &  &  & \vdots\\
\emptyset &  & \cdots & \emptyset &  & \cdots &  & \emptyset\\
\mathbb{Z} &  & \cdots & \mathbb{Z} &  & \cdots &  & \mathbb{Z} & \emptyset\\
\emptyset &  & \cdots & \emptyset & \mathbb{Z} & \cdots &  & \mathbb{Z} & \emptyset & \emptyset
\end{array}\right\} _{-\left(\alpha_{k}+\cdots+\alpha_{n-2}\right)-m\delta}$\normalsize 
\par\end{center}%
\end{minipage}

and Lemma \ref{lem:last-step-main-proof} with $\varphi=-\left(\alpha_{k}+\cdots+\alpha_{n-2}\right)$
applies to attain a $\mathfrak{n}_{-\left\{ \alpha\right\} }^{\square}$-primitive
vector.\\
\textbf{2.1.2.2.}

\noindent\begin{minipage}[t]{1\columnwidth}%
\begin{center}
\small $\left\{ \begin{array}{cccccc}
* & \emptyset &  & \cdots &  & \emptyset\\
\emptyset & \emptyset\\
\vdots &  & \ddots &  &  & \vdots\\
\emptyset &  & \cdots & \emptyset\\
\mathbb{Z} &  & \cdots & \mathbb{Z} & \emptyset & \emptyset\\
\mathbb{Z} &  & \cdots & \mathbb{Z} & \emptyset & \emptyset
\end{array}\right\} _{\alpha_{n-1}+\alpha_{n}+*\delta}\checkmark$\normalsize 
\par\end{center}%
\end{minipage}

We show that the hypothesis is also true for $n-k>1$, i.e when starting
with a zero action of $\mathfrak{n}_{-\left\{ \alpha_{k}\right\} }^{\square}\smallsetminus\mathfrak{n}_{-\left\{ \alpha_{n}\right\} }^{\square}$,
we can show that, by applying root operators, we can attain either
a $\mathfrak{n}_{-\left\{ \alpha_{k+1}\right\} }^{\square}\smallsetminus\mathfrak{n}_{-\left\{ \alpha_{k+2}\right\} }^{\square}$-primitive
element or a primitive element. This way, we reduce inductively to
the induction start where $n-k=1$, just above. The starting annihilating
subalgebra is\small 
\[
\left\{ \begin{array}{cccccccl}
* & \emptyset &  &  & ^{k} & \cdots &  & \emptyset\\
\emptyset & \emptyset &  &  &  &  &  & \vdots\\
\vdots & \ddots & \ddots & \ddots\\
\emptyset & \cdots & \emptyset & \emptyset & \emptyset\\
\mathbb{Z} & \cdots &  & \mathbb{Z} & \emptyset\\
\vdots &  &  & \vdots & \vdots\\
\mathbb{Z} & \cdots &  & \mathbb{Z} & \emptyset &  & \ddots & \vdots\\
\emptyset & \cdots &  & \emptyset & \emptyset &  & \cdots & \emptyset
\end{array}\right\} _{\alpha_{k}}.
\]
\normalsize \textbf{}\\
\textbf{1.}

\noindent\begin{minipage}[t]{1\columnwidth}%
\begin{center}
$\overset{e_{-\left(\alpha_{k}+\cdots+\alpha_{n}\right)-m\delta}}{\rightsquigarrow}$\small $\left\{ \begin{array}{cccccccc}
* & \emptyset &  &  & ^{k} & \cdots &  & \emptyset\\
\emptyset & \emptyset &  &  &  &  &  & \vdots\\
\vdots & \ddots & \ddots & \ddots\\
\emptyset & \cdots & \emptyset & \emptyset & \emptyset &  &  & \emptyset\\
\mathbb{Z} & \cdots &  & \mathbb{Z} & \emptyset &  &  & \left\{ m\right\} \\
\vdots &  &  & \vdots & \vdots &  &  & \emptyset\\
\mathbb{Z} & \cdots &  & \mathbb{Z} & \emptyset &  & \ddots & \vdots\\
\emptyset & \cdots &  & \emptyset & \emptyset &  & \cdots & \emptyset
\end{array}\right\} _{-\left(\alpha_{k+1}+\cdots+\alpha_{n}\right)-m\delta}$ \normalsize 
\par\end{center}%
\end{minipage}\\
\textbf{1.1.}

\noindent\begin{minipage}[t]{1\columnwidth}%
\begin{center}
$\overset{e_{\left(\alpha_{k+1}+\cdots+\alpha_{n}\right)+*\delta}}{\rightsquigarrow}$\small $\left\{ \begin{array}{cccccccc}
* & \emptyset &  &  & ^{k} & \cdots &  & \emptyset\\
\emptyset & * &  &  &  &  &  & \vdots\\
\vdots & \ddots & \ddots & \ddots\\
\emptyset & \cdots & \emptyset & * & \emptyset\\
\mathbb{Z} & \cdots &  & \mathbb{Z} & * &  &  & \left\{ m\right\} \\
\vdots &  &  & \vdots & \vdots\\
\mathbb{Z} & \cdots &  & \mathbb{Z} & \emptyset &  & \ddots & \vdots\\
\emptyset & \cdots &  & \emptyset & \emptyset &  & \cdots & *
\end{array}\right\} _{*\delta}$\normalsize 
\par\end{center}%
\end{minipage}\\
\textbf{1.1.1.}

\noindent\begin{minipage}[t]{1\columnwidth}%
\begin{center}
$\overset{e_{\alpha_{n}+*\delta}}{\rightsquigarrow}$\small $\left\{ \begin{array}{cccccccc}
* & \emptyset &  &  & ^{k} & \cdots &  & \emptyset\\
\emptyset & \emptyset &  &  &  &  &  & \vdots\\
\vdots & \ddots & \ddots & \ddots\\
\emptyset & \cdots & \emptyset & \emptyset & \emptyset &  &  & \emptyset\\
\mathbb{Z} & \cdots &  & \mathbb{Z} & \emptyset &  &  & \left\{ m\right\} \\
\vdots &  &  & \vdots & \vdots & \ddots &  & \emptyset\\
\mathbb{Z} & \cdots &  & \mathbb{Z} & \emptyset &  & \emptyset & \vdots\\
\mathbb{Z} & \cdots &  & \mathbb{Z} & \emptyset &  & \left\{ m'\right\}  & \emptyset
\end{array}\right\} _{\alpha_{n}-m'\delta}$\normalsize 
\par\end{center}%
\end{minipage}

contains $\mathfrak{n}_{-\left\{ \alpha_{k}\right\} }^{\square}$
as claimed.\\
\textbf{2.}

\noindent\begin{minipage}[t]{1\columnwidth}%
\begin{center}
\small $\left\{ \begin{array}{cccccccc}
* & \emptyset &  & ^{k} & \cdots &  &  & \emptyset\\
\emptyset & \emptyset &  &  &  &  &  & \vdots\\
\vdots & \ddots & \ddots\\
\emptyset & \cdots & \emptyset & \emptyset & \ddots\\
\mathbb{Z} & \cdots &  & \mathbb{Z} & \emptyset\\
\vdots &  &  & \vdots & \vdots & \ddots\\
\mathbb{Z} & \cdots &  & \mathbb{Z} & \emptyset & \cdots & \emptyset & \vdots\\
\emptyset & \cdots & \emptyset & \mathbb{Z} & \emptyset &  & \cdots & \emptyset
\end{array}\right\} _{\alpha_{k}}$\normalsize 
\par\end{center}%
\end{minipage}\\
\textbf{2.1. }Now choose the largest $p\in\left\{ 1,\dots,k-1\right\} $,
such that $e_{-\left(\alpha_{p}+\cdots+\alpha_{k}\right)-m\delta}$
does not act trivially for some $m\in\mathbb{Z}$.

\noindent\begin{minipage}[t]{1\columnwidth}%
\begin{center}
$\overset{e_{-\left(\alpha_{p}+\cdots+\alpha_{k}\right)-m\delta}}{\rightsquigarrow}$\small $\left\{ \begin{array}{ccccccccc}
* & \emptyset &  &  &  & ^{k} & \cdots &  & \emptyset\\
\emptyset & \emptyset &  &  &  &  &  &  & \vdots\\
\vdots & \ddots & \ddots\\
 &  &  & \emptyset & \left\{ m\right\} \\
\emptyset &  & \cdots & \emptyset & \emptyset & \emptyset\\
\mathbb{Z} &  & \cdots &  & \mathbb{Z} & \emptyset\\
\vdots &  &  &  & \vdots & \vdots\\
\mathbb{Z} &  & \cdots &  & \mathbb{Z} & \emptyset &  & \ddots & \vdots\\
\emptyset &  & \cdots & \emptyset & \mathbb{Z} & \emptyset &  & \cdots & \emptyset
\end{array}\right\} _{-\left(\alpha_{p}+\cdots+\alpha_{k-1}\right)-m\delta}$\normalsize 
\par\end{center}%
\end{minipage}\\
\textbf{2.1.1.}

\noindent\begin{minipage}[t]{1\columnwidth}%
\begin{center}
$\overset{e_{\alpha_{p}+\cdots+\alpha_{k-1}+*\delta}}{\rightsquigarrow}$\small $\left\{ \begin{array}{ccccc|cccc}
* & \emptyset &  &  & \cdots & ^{k} &  &  & \emptyset\\
\emptyset & \emptyset &  &  &  &  &  &  & \vdots\\
\vdots & \ddots & \ddots &  & \\
 &  &  & \emptyset & \left\{ m\right\} \\
\emptyset &  & \cdots & \emptyset & \emptyset & \emptyset\\
\hline \mathbb{Z} &  & \cdots &  & \mathbb{Z} & \emptyset\\
\vdots &  &  &  & \vdots & \vdots\\
\mathbb{Z} &  & \cdots &  & \mathbb{Z} & \emptyset &  & \ddots & \vdots\\
\emptyset &  & \cdots & \emptyset & \mathbb{Z} & \emptyset &  & \cdots & \emptyset
\end{array}\right\} _{*\delta}$\normalsize 
\par\end{center}%
\end{minipage}

Now by induction assumption the lower right matrix is either a quasicone,
with what $C_{n,n-1}\ne\emptyset$ and therefore $C_{n,q}=\mathbb{Z}$
for $q\in\left\{ 1,\dots,k-1\right\} $, as desired, or it contains
a subalgebra equivalent to $\mathfrak{n}_{-\left\{ \alpha_{r}\right\} }^{\square}\subset A_{n-k}^{\left(1\right)}$
for $r\in\left\{ k,\dots,n\right\} $ which completes to $\mathfrak{n}_{-\left\{ \alpha_{r}\right\} }^{\square}\smallsetminus\mathfrak{n}_{-\left\{ \alpha_{n}\right\} }^{\square}\subset A_{n}^{\left(1\right)}$,
proving the induction hypothesis.\\
\textbf{1.2.}

\noindent\begin{minipage}[t]{1\columnwidth}%
\begin{center}
\small $\left\{ \begin{array}{cccccccc}
* & \emptyset &  &  & ^{k} & \cdots &  & \emptyset\\
\emptyset & \emptyset &  &  &  &  &  & \vdots\\
\vdots & \ddots & \ddots & \ddots\\
\emptyset & \cdots & \emptyset & \emptyset & \emptyset &  &  & \emptyset\\
\mathbb{Z} & \cdots &  & \mathbb{Z} & \emptyset &  &  & \mathbb{Z}\\
\vdots &  &  & \vdots & \vdots &  &  & \emptyset\\
\mathbb{Z} & \cdots &  & \mathbb{Z} & \emptyset &  & \ddots & \vdots\\
\emptyset & \cdots &  & \emptyset & \emptyset &  & \cdots & \emptyset
\end{array}\right\} \sim\left\{ \begin{array}{ccccccccl}
* & \emptyset &  &  & ^{k} & \cdots &  &  & \emptyset\\
\emptyset & \emptyset &  &  &  &  &  &  & \vdots\\
\vdots & \ddots & \ddots & \ddots\\
 &  &  &  & \emptyset\\
\emptyset & \cdots &  & \emptyset & \emptyset\\
\mathbb{Z} & \cdots &  & \mathbb{Z} & \mathbb{Z}\\
\vdots &  &  & \vdots & \emptyset\\
\mathbb{Z} & \cdots &  & \mathbb{Z} & \emptyset &  &  & \ddots & \vdots\\
\emptyset & \cdots &  & \emptyset & \emptyset &  &  & \cdots & \emptyset
\end{array}\right\} $\normalsize 
\par\end{center}%
\end{minipage}

contains $\mathfrak{n}_{-\left\{ \alpha_{k+1}\right\} }^{\square}\smallsetminus\mathfrak{n}_{-\left\{ \alpha_{n}\right\} }^{\square}$
as claimed.\\
\textbf{1.1.2.}

\noindent\begin{minipage}[t]{1\columnwidth}%
\begin{center}
\small $\left\{ \begin{array}{ccccccccl}
* & \emptyset &  &  & ^{k} & \cdots &  &  & \emptyset\\
\emptyset & \emptyset &  &  &  &  &  &  & \vdots\\
\vdots & \ddots & \ddots & \ddots\\
\emptyset & \cdots & \emptyset & \emptyset & \emptyset\\
\mathbb{Z} & \cdots &  & \mathbb{Z} & \emptyset &  &  & \left\{ m\right\} \\
\vdots &  &  & \vdots & \vdots & \ddots &  &  & \vdots\\
\mathbb{Z} & \cdots &  & \mathbb{Z} & \emptyset &  & \emptyset & \mathbb{Z}\\
\mathbb{Z} & \cdots &  & \mathbb{Z} & \emptyset &  &  & \emptyset\\
\emptyset & \cdots &  & \emptyset &  &  & \cdots &  & \emptyset
\end{array}\right\} _{*\delta}\sim\left\{ \begin{array}{ccccccccl}
* & \emptyset &  &  & ^{k} & \cdots &  &  & \emptyset\\
\emptyset & \emptyset &  &  &  &  &  &  & \vdots\\
\vdots & \ddots & \ddots & \ddots\\
 &  &  &  & \emptyset\\
\emptyset & \cdots &  & \emptyset & \emptyset\\
\mathbb{Z} & \cdots &  & \mathbb{Z} & \mathbb{Z}\\
\vdots &  &  & \vdots & *\\
\mathbb{Z} & \cdots &  & \mathbb{Z} & \emptyset &  &  & \ddots & \vdots\\
\emptyset & \cdots &  & \emptyset & \emptyset &  &  & \cdots & \emptyset
\end{array}\right\} $\normalsize 
\par\end{center}%
\end{minipage}

also contains $\mathfrak{n}_{-\left\{ \alpha_{k+1}\right\} }^{\square}\smallsetminus\mathfrak{n}_{-\left\{ \alpha_{n}\right\} }^{\square}$.\\
\textbf{2.1.2.}

\noindent\begin{minipage}[t]{1\columnwidth}%
\begin{center}
\small $\left\{ \begin{array}{ccccccccl}
* & \emptyset &  &  & \cdots & ^{k} &  &  & \emptyset\\
\emptyset & \emptyset &  &  &  &  &  &  & \vdots\\
\vdots & \ddots & \ddots\\
 &  &  & \emptyset & \mathbb{Z}\\
\emptyset &  & \cdots & \emptyset & \emptyset & \emptyset\\
\mathbb{Z} &  & \cdots &  & \mathbb{Z} & \emptyset\\
\vdots &  &  &  & \vdots & \vdots\\
\mathbb{Z} &  & \cdots &  & \mathbb{Z} & \emptyset &  & \ddots & \vdots\\
\emptyset &  & \cdots & \emptyset & \mathbb{Z} & \emptyset &  & \cdots & \emptyset
\end{array}\right\} _{-\left(\alpha_{p}+\cdots+\alpha_{k-1}\right)-m\delta}$\normalsize 
\par\end{center}%
\end{minipage}\\
\textbf{2.1.2.1.}

\noindent\begin{minipage}[t]{1\columnwidth}%
\begin{center}
$\overset{e_{-\left(\alpha_{q}+\cdots+\alpha_{p-1}\right)+*\delta}}{\rightsquigarrow}$\small $\left\{ \begin{array}{ccccccccl}
* & \emptyset &  &  & \cdots & ^{k} &  &  & \emptyset\\
\emptyset & \emptyset &  &  & \left\{ m'\right\}  &  &  &  & \vdots\\
\vdots & \ddots & \ddots\\
 &  &  & \emptyset & \mathbb{Z}\\
\emptyset &  & \cdots & \emptyset & \emptyset & \emptyset\\
\mathbb{Z} &  & \cdots &  & \mathbb{Z} & \emptyset\\
\vdots &  &  &  & \vdots & \vdots\\
\mathbb{Z} &  & \cdots &  & \mathbb{Z} & \emptyset &  & \ddots & \vdots\\
\emptyset &  & \cdots & \emptyset & \mathbb{Z} & \emptyset &  & \cdots & \emptyset
\end{array}\right\} _{-\left(\alpha_{q}+\cdots+\alpha_{k-1}\right)-m'\delta}$\normalsize 
\par\end{center}%
\end{minipage}

Now we can apply $e_{\alpha_{q}+\cdots+\alpha_{k-1}+*\delta}$ and
be in the situation of \textbf{2.1.1.}, which was solved, or we are
in a situation analog to \textbf{2.1.2.} but with a $p'<p$. Thus
after finitely many steps, we arrive at a matrix equivalent to $\mathfrak{n}_{-\left\{ \alpha_{1}\right\} }^{\square}$,

\noindent\begin{minipage}[t]{1\columnwidth}%
\begin{center}
\small $\sim\left\{ \begin{array}{ccccccccl}
* & \emptyset &  &  & \cdots & ^{k} &  &  & \emptyset\\
\mathbb{Z} & \emptyset &  &  &  &  &  &  & \vdots\\
\vdots & \ddots & \ddots\\
 &  &  & \emptyset\\
\mathbb{Z} &  & \cdots & \emptyset & \emptyset & \emptyset\\
\mathbb{Z} &  & \cdots &  & \mathbb{Z} & \emptyset\\
\vdots &  &  &  & \vdots & \vdots\\
\mathbb{Z} &  & \cdots &  & \mathbb{Z} & \emptyset &  & \ddots & \vdots\\
\mathbb{Z} & \emptyset & \cdots &  &  & \emptyset &  & \cdots & \emptyset
\end{array}\right\} $ \normalsize 
\par\end{center}%
\end{minipage}

This proves the induction hypothesis and therefore the proposition.\hfill{}$\square$

\subsection{Main Algorithm}

The algorithmic part of the proof is structured in form of a search
forest. Let $S$ be the set of simple basic strategies. Applying strategies
to a set of quasicones, after $N$ iterations we obtain an $\left|S\right|$-ary
forest with $n$ trees of depth $N$. The image of the set of critical
normal quasicones after $N$ steps is a proper subset $S^{N}\left\{ C_{i}\right\} _{i=1,\dots,n}\cap\left\{ C_{i}\right\} _{i=1,\dots,n}\subsetneq\left\{ C_{i}\right\} _{i=1,\dots,n}$
but might become stationary $S^{N}\left\{ C_{i}\right\} _{i=1,\dots,n}=\left\{ C_{i_{k}}\right\} _{k=1,\dots,n'}$,
$\left(n'<n\right)$, at some point. Assume there is an element $C_{j}\in S^{N}\left\{ C_{i}\right\} _{i=1,\dots,n}\cap\left\{ C_{i}\right\} _{i=1,\dots,n}$.
Then $S^{2N}C_{j}\subset S^{N}\left\{ C_{i}\right\} _{i=1,\dots,n}$
so that applying strategies to $C_{j}$ is obsolete because there
is nothing new to attain. The following algorithm takes account to
these cases. 

\noindent \begin{center}
\setlength{\arrayrulewidth}{1pt}
\begin{longtable}{l>{\raggedright}p{0.5cm}|>{\raggedright}p{0.5cm}|>{\raggedright}p{0.5cm}|>{\raggedright}p{0.5cm}|>{\raggedright}p{0.36\columnwidth}>{\raggedright}p{0.2\columnwidth}}
\hline 
\noalign{\vskip2pt}
\multicolumn{7}{l}{\textbf{Algorithm. }\textsc{\label{lst:ConcatenateStrategies} Concatenate
Strategies}}\tabularnewline
\hline 
\noalign{\vskip2pt}
 & \multicolumn{2}{l}{\textbf{input}} & \multicolumn{4}{l}{\textbf{ $\left\{ C_{1},\dots,C_{n}\right\} $ }list quasicones with
unknown successfull strategy}\tabularnewline
\noalign{\vskip2pt}
 & \multicolumn{1}{>{\raggedright}p{0.5cm}}{} & \multicolumn{1}{>{\raggedright}p{0.5cm}}{} & \multicolumn{4}{l}{\ \ \  $\left\{ s_{1},\dots,s_{k}\right\} $ set of simple basic
strategies}\tabularnewline
\noalign{\vskip2pt}
 & \multicolumn{2}{l}{\textbf{output}} & \multicolumn{4}{l}{ list of quasicones where no concatenation of strategy was successful }\tabularnewline
\noalign{\vskip2pt}
 & \multicolumn{2}{l}{\textbf{tools}} & \multicolumn{4}{l}{%
\begin{minipage}[t]{0.75\columnwidth}%
- dictionary $\mathrm{dict}$, i.e. a list of key-value pairs $\left\{ key:value\right\} $;
\\
where $value$ is a list of tree indices and a $\mathrm{tree\_index}$
is the tuple that codifies the position in the quasicone-step tree;
\\
$key$ will be the index $i$ of the quasicone matrix $C_{i}$, $\left(i=1,\dots,n\right)$

- functions\texttt{ Apply\_strategy() }and\texttt{ Weyl\_normal\_form()} 

- method \texttt{successful}() that returns \texttt{True} if the gap
was reduced or a GVM-complete quasicone was achieved%
\end{minipage}}\tabularnewline
 & \multicolumn{1}{>{\raggedright}p{0.5cm}}{} & \multicolumn{1}{>{\raggedright}p{0.5cm}}{} & \multicolumn{1}{>{\raggedright}p{0.5cm}}{} & \multicolumn{1}{>{\raggedright}p{0.5cm}}{} &  & \tabularnewline
 & \multicolumn{6}{l}{\texttt{\# initialization}}\tabularnewline
 & \multicolumn{5}{l}{$\mathrm{dict}\leftarrow\left\{ 1:\left[\left(1\right)\right],\dots,1:\left[\left(n\right)\right]\right\} $} & \tabularnewline
 & \multicolumn{6}{l}{\# initialized with lists only containing one tuple, a one-tuple (start
index)}\tabularnewline
 & \multicolumn{5}{l}{$\lambda\leftarrow-\delta$} & \# startweight\tabularnewline
 & \multicolumn{5}{l}{\textbf{$\mathrm{list\_at\_start}\leftarrow\left[C_{1},\dots,C_{n}\right]$}} & \tabularnewline
 & \multicolumn{5}{l}{$\mathrm{list\_of\_successful}\leftarrow\left[\:\right]$} & \tabularnewline
 & \multicolumn{6}{l}{$\mathrm{strategy\_list}\leftarrow\left\{ s_{1},\dots,s_{k}\right\} $}\tabularnewline
 & \multicolumn{1}{>{\raggedright}p{0.5cm}}{} & \multicolumn{1}{>{\raggedright}p{0.5cm}}{} & \multicolumn{1}{>{\raggedright}p{0.5cm}}{} & \multicolumn{1}{>{\raggedright}p{0.5cm}}{} &  & \tabularnewline
 & \multicolumn{6}{l}{\texttt{\textbf{def }}\texttt{Step()}\texttt{\textbf{: }}}\tabularnewline
 &  & \multicolumn{4}{l}{\texttt{\textbf{foreach}} $C_{i}$ \texttt{\textbf{in}} $\mathrm{list\_at\_start}$\texttt{\textbf{:}} } & \tabularnewline
 &  &  & \multicolumn{3}{l}{\texttt{\textbf{foreach}} $s_{j}$ \texttt{\textbf{in}} $\mathrm{strategy\_list}$\texttt{\textbf{:}} } & \tabularnewline
 &  &  &  & \multicolumn{3}{l}{$\mathrm{step}$ \texttt{\textbf{$\leftarrow$}} \texttt{Apply\_strategy}($\lambda$,
$C$, $s$) }\tabularnewline
 &  &  &  & \multicolumn{3}{l}{$C'$ $\leftarrow$\texttt{ Weyl\_normal\_form}($\mathrm{step}.C$)}\tabularnewline
 &  &  &  & \multicolumn{3}{l}{\texttt{\textbf{if}} $\mathrm{step}$.\texttt{successful}() \texttt{\textbf{or}}
($C'$ \texttt{\textbf{in}} $\mathrm{list\_of\_successful}$)\texttt{\textbf{:}}}\tabularnewline
 &  &  &  &  & \multicolumn{2}{l}{$\mathrm{list\_of\_successful}$.\texttt{append}( }\tabularnewline
 &  &  &  &  & \multicolumn{2}{l}{\hspace*{0.5cm}$C_{i}$ \texttt{if} $\mathrm{dict}$$\left[i\right]\in\mathrm{step}.\mathrm{path\_to\_successful}$ }\tabularnewline
 &  &  &  &  & \hspace*{0.5cm}\texttt{for} $i\in\left\{ 1,\dots,n\right\} $) & \tabularnewline
 &  &  &  &  & \texttt{\textbf{break}} & \tabularnewline
 &  &  &  & \multicolumn{2}{l}{\texttt{\textbf{foreach}} $\mathrm{tree\_index}$ \texttt{\textbf{in}}
$\mathrm{dict}\left[i\right]$\texttt{\textbf{:}} } & \# add new index:\tabularnewline
 &  &  &  &  & \multicolumn{2}{l}{$k\leftarrow$list index of $C'$}\tabularnewline
 &  &  &  &  & $\mathrm{dict}\left[k\right]\leftarrow$$\mathrm{dict}\left[k\right]$.\texttt{append}($\mathrm{tree\_index}$.\texttt{append}($j$)) & \tabularnewline
 &  & \multicolumn{4}{l}{\texttt{\textbf{return}}} & \tabularnewline
 & \multicolumn{1}{>{\raggedright}p{0.5cm}}{} & \multicolumn{1}{>{\raggedright}p{0.5cm}}{} & \multicolumn{1}{>{\raggedright}p{0.5cm}}{} & \multicolumn{1}{>{\raggedright}p{0.5cm}}{} &  & \tabularnewline
 & \multicolumn{1}{>{\raggedright}p{0.5cm}}{} & \multicolumn{1}{>{\raggedright}p{0.5cm}}{} & \multicolumn{1}{>{\raggedright}p{0.5cm}}{} & \multicolumn{1}{>{\raggedright}p{0.5cm}}{} &  & \tabularnewline
 & \multicolumn{5}{l}{\texttt{\# main routine}} & \tabularnewline
 & \multicolumn{5}{l}{$\mathrm{old\_list\_of\_successful}\leftarrow\mathrm{list\_of\_successful}$} & \tabularnewline
 & \multicolumn{5}{l}{\texttt{\textbf{while }}$\mathrm{list\_of\_successful}$ \texttt{\textbf{!=
$\mathrm{list\_at\_start}$ :}}} & \tabularnewline
 &  & \multicolumn{4}{l}{\texttt{Step()}} & \tabularnewline
 &  & \multicolumn{5}{l}{\texttt{\textbf{if }}$\mathrm{old\_list\_of\_successful}$ \texttt{\textbf{==}}
$\mathrm{list\_of\_successful}$ \texttt{\textbf{: }}}\tabularnewline
 &  &  & \multicolumn{3}{l}{\texttt{\textbf{print }}(\texttt{\textbf{$\mathrm{list\_at\_start}$
$\smallsetminus$ }}$\mathrm{list\_of\_successful}$)} & \tabularnewline
 &  &  & \multicolumn{3}{l}{\texttt{\textbf{break}}} & \tabularnewline
 & \multicolumn{6}{l}{\texttt{\textbf{else: print }}\texttt{'successful strategies for all
quasicones found'}}\tabularnewline
 & \multicolumn{1}{>{\raggedright}p{0.5cm}}{} & \multicolumn{1}{>{\raggedright}p{0.5cm}}{} & \multicolumn{1}{>{\raggedright}p{0.5cm}}{} & \multicolumn{1}{>{\raggedright}p{0.5cm}}{} &  & \tabularnewline
\hline 
\end{longtable}
\par\end{center}

\setlength{\arrayrulewidth}{1pt} 

The implementation of the functions\texttt{ Apply\_strategy() }and\texttt{
Weyl\_normal\_form() }and the method\texttt{ successful() }is straightforward
from the definitions in the paper\footnote{full code available on http://github.com/thoma5B/Strategies-for-support-quasicones-of-affine-Lie-algebras-of-type-A}.

For certain anti-matroidal structures, greedy algorithms are unfeasible.
The anti-matroidal structure of a quasicone subalgebra lattice $\mathfrak{C}$
suggests that the problem of finding successful strategies is a \emph{constraint
satisfaction problem} (CSP) but with rather ``non-holonomic'' constraints
on the phase space $\mathfrak{C}\times\left(\Delta\cup\left\{ 0\right\} \right)$,
which is called \emph{arc consistency} or \emph{path consistency}
in this context \cite[pp. 121--137]{Wallace1995}. This class of problems
seems to be at least as complex as problems that can only be solved
by integer linear programming.

\begin{figure}[H]
\noindent \begin{centering}
\includegraphics[width=0.6\textwidth]{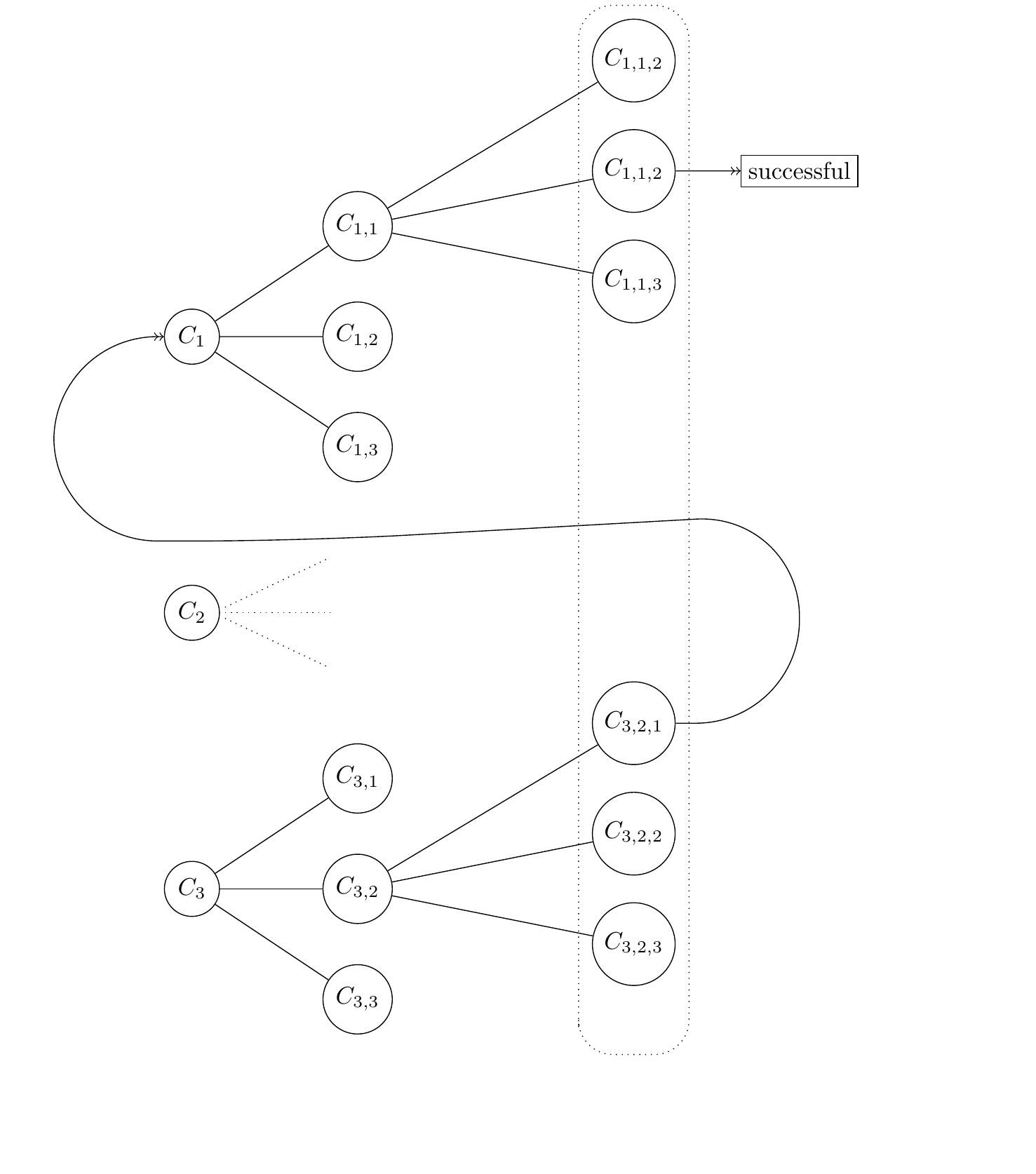}
\par\end{centering}
\caption{Illustration of the algoritm: For $C_{1}$ a succesfull solution has
been fund in the second step. Therefore all the quasicones $C_{1[,1[,2]]}$
are added to the list of solved cases. For $C_{3}$ the iteration
$C_{3,2,1}$ turns out to be in that list and thus all quasicones
$C_{3[,2[,1]]}$ can also be considered solved.}
\end{figure}

\subsection{The Kac-Moody algebras $A_{2}^{\left(1\right)}$, $A_{3}^{\left(1\right)}$
and $A_{4}^{\left(1\right)}$}

In Table 3, the compuational results for $A_{2}^{\left(1\right)}$,
$A_{3}^{\left(1\right)}$ and $A_{4}^{\left(1\right)}$ are summararized.
For $A_{3}^{\left(1\right)}$, almost all quasicones admit defect
reduction or even yield defect $0$ complete quasicones when applying
simple basic strategies (cf. Definition \ref{def: simple_basic_strategy}).
Only 8 quasicones $C_{1},\dots,C_{8}$ (having defects $2$ and $3$)
admit no defect reduction. But this set is not stabilized by defect-invariant
strategies: There exists at least one simple basic strategy such that
$\#C_{i}-\#sC_{i}=0$, for each $i\in\left\{ 1,\dots,8\right\} $.
For those we compare output $sC_{i}$ and input $C$, and find that
there is no pairwise equivalence between $sC_{i}$ and $C_{j}$ for
$i,j\in\left\{ 1,\dots,8\right\} $. Consequently, further concatenation
of strategies will lead to a defect reduction or yield a complete
quasicone. Thus we showed that there is a concatetanate strategy that
admits defect reduction in these cases either.
\noindent \begin{center}
\begin{table}[H]
\noindent \centering{}\caption{Computational results for algorithmic solution of finding successful
strategies for $A_{2}^{\left(1\right)}$, $A_{3}^{\left(1\right)}$
and $A_{4}^{\left(1\right)}$: number of quasicones with no successful
strategy in the approach set. The shortest strategy was defined below
Definition \ref{def: strategy} and the shortest long strategy is
according to Definition \ref{def: shortest long strategy}.}
\small %
\begin{tabular}{c>{\centering}p{0.13\columnwidth}>{\centering}p{0.13\columnwidth}>{\centering}p{0.13\columnwidth}>{\centering}p{0.13\columnwidth}>{\centering}p{0.13\columnwidth}}
\hline 
\noalign{\vskip3pt}
\multirow{2}{*}{$\mathfrak{g}$} & \multirow{2}{0.13\columnwidth}{total no. of considered quasicones } & \multicolumn{4}{c}{number of unsolved quasicones after applying ...}\tabularnewline
\noalign{\vskip5pt}
 &  & ... the shortest strategy & ... the shortest long strategy & ... a set of simple basic strategies & ... concatenations of simple basic strategies\tabularnewline
\hline 
\noalign{\vskip5pt}
$A_{2}^{\left(1\right)}$ & 48 & 32 & 0 &  & \tabularnewline[5pt]
\cline{2-6} 
\noalign{\vskip5pt}
$A_{3}^{\left(1\right)}$ & 669 & 242 & 38 & 8 & 0\tabularnewline[5pt]
\cline{2-6} 
\noalign{\vskip5pt}
$A_{4}^{\left(1\right)}$ & 23431 & 2747 & 536 & 65 & 8\tabularnewline[5pt]
\hline 
\noalign{\vskip5pt}
\end{tabular}\normalsize 
\end{table}
\par\end{center}

Recall the decomposition of the set of strategies $\mathfrak{S}=\bigcup_{\psi\in Q}\mathfrak{S}_{\psi}$.
In the $A_{4}^{\left(1\right)}$, concatenation of simple basic strategies
do no result in a successful strategy. The eight left-over cases are
solved manually according to the following paradigm, 

\textbf{1.} for every root $\varphi\in\Delta^{\circ}$ and $k\in\mathbb{Z}$,
the root step $\sigma\left(\varphi,k\right):\mathfrak{S}_{\psi}\rightarrow\mathfrak{S}$
sending $s\mapsto s\circ e_{\varphi+k\delta}$, seen as a partition
of the highest root, gives rise to a new partition, namely minus the
index of the component $\mathfrak{S}_{\psi+\varphi+k\delta}$, i.e.
$-\left(\psi+\varphi+k\delta\right)$, which is supposed to be a root

\textbf{2.} $\left(\varphi,k\right)$ is chosen such that $\mathrm{gap}\left(C\right)_{\left|\psi+\varphi\right|}=\max_{\nu\in I_{n}}\mathrm{gap}\left(C\right)_{\nu}$
and $k=C_{\varphi}-1$ (cf. \ref{eq:gap})

\begin{longtable}{ll}
1. & \small $\ensuremath{\left(\begin{array}{ccccc}
* & 1 & 1 & 0 & -1\\
2 & * & 1 & 1 & 0\\
1 & 2 & * & 1 & 0\\
2 & 1 & 1 & * & 1\\
2 & 2 & 2 & 1 & *
\end{array}\right)}$\normalsize $\overset{e_{3}\circ e_{-1}}{\rightsquigarrow}$\small $\ensuremath{\left(\begin{array}{ccccc}
* & 0 & 1 & 0 & -1\\
2 & * & 1 & 1 & 0\\
1 & 0 & * & 1 & 0\\
2 & 1 & 1 & * & 1\\
2 & 2 & 2 & 1 & *
\end{array}\right)}$\tabularnewline
\addlinespace[4pt]
2. & \small $\ensuremath{\left(\begin{array}{ccccc}
* & 1 & 0 & 1 & 1\\
2 & * & 1 & 1 & 2\\
2 & 2 & * & 1 & 2\\
1 & 1 & 1 & * & 1\\
1 & 0 & 0 & 1 & *
\end{array}\right)}$\normalsize $\overset{e_{-3}\circ e_{1}}{\rightsquigarrow}$\small $\ensuremath{\left(\begin{array}{ccccc}
* & 1 & 0 & 1 & 1\\
0 & * & -1 & 1 & 2\\
2 & 2 & * & 1 & 2\\
1 & 1 & 1 & * & 1\\
1 & 0 & 0 & 1 & *
\end{array}\right)}$\normalsize \tabularnewline
\addlinespace[4pt]
3. & \small $\ensuremath{\left(\begin{array}{ccccc}
* & 1 & 1 & 0 & 1\\
2 & * & 1 & 1 & 1\\
1 & 2 & * & 1 & 1\\
2 & 1 & 1 & * & 1\\
1 & 1 & 1 & 1 & *
\end{array}\right)}$\normalsize $\overset{e_{12}\circ e_{4}\circ e_{-2}\circ e_{-7}\circ e_{2}\circ e_{1}}{\rightsquigarrow}$\small $\ensuremath{\left(\begin{array}{ccccc}
* & 1 & 2 & 0 & 1\\
0 & * & 0 & -2 & -2\\
0 & 2 & * & -1 & 1\\
2 & 4 & 3 & * & 3\\
1 & 1 & 3 & 1 & *
\end{array}\right)}$\normalsize \tabularnewline
\addlinespace[4pt]
4. & \small $\ensuremath{\left(\begin{array}{ccccc}
* & 1 & 0 & 0 & 1\\
2 & * & 1 & 1 & 1\\
2 & 2 & * & 1 & 1\\
2 & 1 & 1 & * & 1\\
1 & 1 & 1 & 1 & *
\end{array}\right)}$\normalsize $\overset{e_{-2}\circ e_{-7}\circ e_{2}\circ e_{1}}{\rightsquigarrow}$\small $\left(\begin{array}{ccccc}
* & 1 & 0 & 0 & 1\\
0 & \mathbb{Z} & 1 & -2 & 1\\
0 & 2 & * & -1 & 1\\
2 & 1 & 1 & \mathbb{Z} & 1\\
1 & 1 & 1 & 1 & *
\end{array}\right)$\normalsize $\cong\mathfrak{g}$\tabularnewline
\addlinespace[4pt]
5. & \small $\ensuremath{\left(\begin{array}{ccccc}
* & 1 & 1 & 1 & 0\\
2 & * & 1 & 1 & 1\\
1 & 2 & * & 1 & 1\\
1 & 1 & 1 & * & 1\\
2 & 1 & 1 & 1 & *
\end{array}\right)}$\normalsize $\overset{e_{3}\circ e_{-1}}{\rightsquigarrow}$\small $\ensuremath{\left(\begin{array}{ccccc}
* & 0 & 1 & 1 & 0\\
2 & * & 1 & 1 & 1\\
1 & 0 & * & 1 & 1\\
1 & 1 & 1 & * & 1\\
2 & 1 & 1 & 1 & *
\end{array}\right)}$\normalsize \tabularnewline
\addlinespace[4pt]
6. & \small $\ensuremath{\left(\begin{array}{ccccc}
* & 1 & 0 & 1 & 0\\
2 & * & 1 & 1 & 1\\
2 & 2 & * & 1 & 1\\
1 & 1 & 1 & * & 1\\
2 & 1 & 1 & 1 & *
\end{array}\right)}$\normalsize $\overset{e_{-3}\circ e_{1}}{\rightsquigarrow}$\small $\ensuremath{\left(\begin{array}{ccccc}
* & 1 & 0 & 1 & 0\\
0 & * & -1 & 1 & 1\\
2 & 2 & * & 1 & 1\\
1 & 1 & 1 & * & 1\\
2 & 1 & 1 & 1 & *
\end{array}\right)}$\normalsize \tabularnewline
\addlinespace[4pt]
7. & \small $\ensuremath{\left(\begin{array}{ccccc}
* & 1 & 1 & 1 & 0\\
2 & * & 1 & 1 & 1\\
1 & 2 & * & 1 & 1\\
1 & 1 & 2 & * & 1\\
2 & 1 & 1 & 1 & *
\end{array}\right)}$\normalsize $\overset{e_{6}\circ e_{3}\circ e_{-6}\circ e_{-2}\circ e_{-8}\circ e_{-1}\circ e_{7}\circ e_{12}\circ e_{-7}\circ e_{2}\circ e_{1}}{\rightsquigarrow}$\small $\ensuremath{\left(\begin{array}{ccccc}
* & 2 & 2 & 2 & 2\\
-1 & * & 0 & 1 & 2\\
0 & 2 & * & 2 & 1\\
-1 & 1 & 0 & * & 2\\
0 & 0 & 1 & 0 & *
\end{array}\right)}$\normalsize \tabularnewline
\addlinespace[4pt]
8. & \small $\ensuremath{\left(\begin{array}{ccccc}
* & 1 & 1 & 0 & 0\\
2 & * & 1 & 1 & 0\\
1 & 2 & * & 1 & 0\\
2 & 1 & 2 & * & 1\\
2 & 2 & 2 & 1 & *
\end{array}\right)}$\normalsize $\overset{e_{-3}\circ e_{2}\circ e_{-12}\circ e_{-3}\circ e_{7}\circ e_{-2}\circ e_{-7}\circ e_{2}\circ e_{1}}{\rightsquigarrow}$\small $\ensuremath{\left(\begin{array}{ccccc}
* & 3 & 2 & 0 & 0\\
-1 & * & 0 & -2 & -2\\
0 & 2 & * & -1 & -1\\
2 & 4 & 3 & * & 1\\
2 & 4 & 3 & 1 & *
\end{array}\right)}$\normalsize \tabularnewline
\addlinespace[4pt]
\end{longtable}


\begin{thebibliography}{References}
\bibitem[BBFK13]{BekkertBenkartFutornyKashuba}Bekkert, V., Benkart,
G., Futorny, V., Kashuba, I., \emph{New Irreducible Modules for Heisenberg
and Affine Lie Algebras}, J. Algebra \textbf{373}, 284-298 (2013)

\bibitem[B09]{Bunke2009}Bunke, T., \emph{Classification of irreducible
non-dense modules of $A_{2}^{\left(2\right)}$}, Alg. and Discr. Math.,
\textbf{2}, 11\textendash 22 (2009)

\bibitem[Car]{Carter2005}Carter, R., \emph{Lie algebras of finite
and affine type}, Cambridge (2005)

\bibitem[ChP86]{CharyPressley1986}Chari, V., Pressley, A., \emph{Integrable
Representations of Twisted Affine Lie Algebras}, J. of Algebra \textbf{113},
438\textendash 46 (1986)

\bibitem[Ch86]{Chary1986}Chari, V., \emph{Integrable Representations
of Affine Lie Algebras}, Invent. math. \textbf{85}, 317\textendash 335
(1986)

\bibitem[De80]{Deodhar1980}Deodhar, V.V., \emph{On a Construction
of Representations and a Problem of Enright,} lnventiones math. \textbf{57},
101\textendash 118 (1980)

\bibitem[DG09]{DimitrovGrantscharov} Dimitrov, I., Grantcharov, D.,
\emph{Classification of simple weight modules for affine Lie algebras},
$\mathtt{arXiv:0910.0688v1}$ (2009)

\bibitem[DMP00]{DimitrovMathieuPenkov}Dimitrov, I., Mathieu., O.,
Penkov, I., \emph{On the structure of weight modules}, Trans. Amer.
Math. Soc. \textbf{352} , 2857-2869 (2000)

\bibitem[FRT08]{FeliksonRetakhTumarkin2008}Felikson, A., Retakh,
A., Tumarkin, P., \emph{Regular subalgebras of affine Kac Moody algebras},
J. Physics A: Mathematical and Theoretical \textbf{41}, no. 36 (2008)

\bibitem[Fe90]{Fernando1990} Fernando, S., \emph{Lie algebra modules
with finite dimensional weight spaces, I}, TAMS \textbf{322}, 757\textendash 781
(1990)

\bibitem[Fu92]{Futorny1992}Futorny, V.M., \emph{The parabolic subsets
of root system and corresponding representations of affine Lie algebras},
Contemp. Math. \textbf{131}, 45\textendash 52, Providence, RI, AMS
(1992)

\bibitem[Fu94]{Futorny1994}Futorny, V.M., \emph{Imaginary Verma modules
for affine Lie algebras}, Canad. Math. Bull. \textbf{37}, 213\textendash 218
(1994)

\bibitem[Fu96]{Futorny1996}Futorny, V.M., \emph{Irreducible non-dense
$A_{1}^{\left(1\right)}$-modules}, Pacific J. of Math. \textbf{172},
No. 1, 83\textendash 97 (1996)

\bibitem[Fu97]{Futorny1997}Futorny, V.M., \emph{Representations of
Affine Lie algebras}, Queen's Papers in Pure and Applied Mathematics
\textbf{106}, v. 1, Kingston (1997)

\bibitem[FuT01]{FutornyTsylke}Futorny, V.M., Tsylke, A., \emph{Irreducible
non-zero level modules with finite- dimensional weight spaces for
Affine Lie algebras}, J. of Algebra \textbf{238}, 426\textendash 441
(2001)

\bibitem[FK09]{FutornyKashuba2009}Futorny, V.M., Kashuba, I., \emph{Induced
Modules for Affine Lie Algebras}, SIGMA \textbf{5} 026, 14 pp.  (2009)

\bibitem[JK85]{JacobsonKac1985} Jakobsen, H.P. , Kac, V.G., \emph{A
new class of unitarizable highest weight representations of infinite-dimensional
Lie algebras}, Lecture Notes in Phys., \textbf{226}, 1\textendash 20,
Springer, Berlin (1985)

\bibitem[Kac]{Kac1983}Kac, V., \emph{Infinite-dimensional Lie algebras,
3rd Ed.}, Cambridge (1990)

\bibitem[Ma00]{Mathieu2000} Mathieu, O., \emph{Classification of
irreducible weight modules}, Ann. Inst. Fourier, Grenoble, \textbf{50},
2, 537\textendash 592 (2000)

\bibitem[MP95]{MoodyPianzola}Moody, R.V., Pianzola, A., \emph{Lie
algebras with triangular decompositions}, Can. Math. Soc. series of
monographs and advanced texts, John Wiley (1995)

\bibitem[Na84]{Nation1984} Nation, J.B., \emph{On partially ordered
sets embeddable in a free lattice}, Algebra Universalis \textbf{18},
327\textendash 333 (1984)

\bibitem[SpSt09]{SpeyerSturmfels2009} Speyer, D., Sturmfels, B.,
\emph{Tropical Mathematics}, Math. Mag. \textbf{82}, 163-173 (2009)

\bibitem[Ta71]{Takiff1971}Takiff, S.J., \emph{Rings of invariant
polynomials for a class of Lie algebras}, Trans. Amer. Math. Soc.
\textbf{160}, 249 \textendash 262 (1971)

\bibitem[Wa95]{Wallace1995} Wallace, R., \emph{Directed arc consistency
preprocessing}, Selected papers from the ECAI-94 Workshop on Constraint
Processing, M. Meyer, Ed., LNCS \textbf{923}, Springer, Berlin (1995)

\bibitem[Wi11]{Wilson2011}Wilson, B.J.,\emph{ Highest-Weight Theory
for Truncated Current Lie Algebras}, J. Algebra \textbf{336}, 1-27
(2011)
\end{thebibliography}
\end{document}